\newfont{\msbm}{msbm10 at 11pt}
\newcommand {\R} {\mbox{\msbm R}}
\newcommand {\N} {\mbox{\msbm N}}
\def\be{\begin{equation}}
\def\ee{\end{equation}}
\def\ba{\begin{align}}
\def\ea{\end{align}}
\newtheorem{Theo}{Theorem}
\newtheorem{Lemma}[Theo]{Lemma}
\newtheorem{Cor}[Theo]{Corollary}
\newtheorem{Prop}[Theo]{Proposition}
\begin{document}
\title{A Hierarchical Probability Model of Colon Cancer}
\author{by Michael Kelly \\ University of California, San Diego}
\maketitle

\footnote{Contact: mbkelly@math.ucsd.edu, Mathematics Department, UC San Diego, 9500 Gilman Dr. \#0112, La Jolla, CA 92093, U.S.A.}

\footnote{{\it AMS 2010 subject classifications}.  Primary 60J99;
Secondary 92D99}

\begin{abstract}
We consider a model of fixed size $N = 2^l$ in which there are $l$ generations of daughter cells and a stem cell. In each generation $i$ there are $2^{i-1}$ daughter cells. At each integral time unit the cells split so that the stem cell splits into a stem cell and generation 1 daughter cell and the generation $i$ daughter cells become two cells of generation $i+1$. The last generation is removed from the population. The stem cell gets first and second mutations at rates $u_1$ and $u_2$ and the daughter cells get first and second mutations at rates $v_1$ and $v_2$. We find the distribution for the time it takes to get two mutations as $N$ goes to infinity and the mutation rates go to 0. We also find the distribution for the location of the mutations. Several outcomes are possible depending on how fast the rates go to 0. The model considered has been proposed by Komarova (2007) as a model for colon cancer.
\end{abstract}

\emph{Keywords: Cancer, Mutations, Poisson Process, Population Model}

\section{Introduction}
In the 1950's Armitage and Doll \cite{ArDo} proposed that cancer may be the end result of an accumulation of two or more cell mutations. Komarova \cite{Kom} discusses three mathematical models which may be used to model the mutations that lead to cancer. The first is the Moran model, which may be used to model cancers in liquids such as Leukemia. In this model there is a fixed population of size $N$. There is a rate $\mu$ at which cells are getting mutations. Each cell in the population dies at rate 1 and is replaced by any individual in the population, including itself, with equal probability. The second is a spatial model which may be used to model cancers in solid tissues. This model is similar to the Moran model except that the cells are given spatial locations and when they die they are only replaced by nearby cells. The third model, the one we focus on in this paper, is referred to as the hierarchical model in \cite{Kom}. This model may be used for colon cancer.

As discussed in \cite{Kom}, many cells in the human body, including those in the colon, go through a three step process. It begins with a stem cell which will stay in the population for a long time and have many descendants. Some of these descendants will also be stem cells, but others will be differentiated progenitor cells. The progenitor cells, or what we shall refer to as daughter cells in this paper, will split into more daughter cells. The number of times these cells split is dependent upon what organ of the body they are in. We will refer to the number of splits that a daughter cell has undergone as the generation of the cell. Once the cells split enough times they reach maturity and are swept out of the population in a biological process called apoptosis.

The colon is lined with crypts that contain pockets of cells. The cells in the colon, as described by Komarova in \cite{KWa}, are such that stem cells reside at the bottom of the crypt and the daughters migrate up the crypt so that the higher generation daughter cells are near the top. We assume that cancer is the result of two mutations, as is done in \cite{Kom}. There are three ways in which the mutations may occur. The stem cell may acquire both mutations so that cancer is a result of mutations of the stem cell only. It is possible that stem cell receives the first mutation and a daughter cell gets the second, or a daughter cell and one of its descendants will each receive mutations before they are swept from the crypt. In \cite{Kom} these cases are abbreviated as ss, sd and dd respectively.

The Hierarchical model shall be referred to as $H_1$. This model has a fixed population of size $N = 2^l$ where $l$ is the number of generations of daughter cells in the crypt. There is one stem cell and for $k \in \{1, 2, \dots, l\}$ there are $2^{k-1}$ daughter cells of generation $k$. We start with a full
crypt and no mutations. At each integral time unit all of the cells split in the following way:

\begin{itemize}
\item The stem cell splits into a stem cell and a generation 1 daughter cell.
\item For each generation $k$ with $1 \leq k \leq l-1$, a daughter cell of generation $k$ will split into two cells of generation $k+1$.
\item The daughter cells of generation $l$ undergo apoptosis and are swept from the population.
\end{itemize}
Notice that the generations are constant size throughout time. The cells will accumulate mutations via Poisson processes. A cell with 0, 1 or 2 mutations is called a type-0, type-1 or type-2 cell respectively. A mutation which occurs on a type-0 or type-1 cell is called a type-1 or type-2 mutation respectively. This terminology is used so that a mutation that makes a cell type-2 is called a type-2 mutation. Once a type-2 mutation occurs the colon is assumed to have cancer. The cells will each have two Poisson processes marking them, one which will cause type-1 mutations and one which will cause type-2 mutations. The first Poisson process that marks a cell will only cause a type-1 mutation if the cell is a type-0. If a mark of the Poisson process occurs while the cell is not a type-0 then the mutation is rejected. Likewise, the second Poisson process only causes mutations on type-1 cells. If a mark from this Poisson process occurs on a cell while it is type-1 then the cell becomes type-2, but if the cell is not a type-1 then nothing happens. All of the Poisson processes are independent. The mutations are passed to the descendants when a cell splits. It is sometimes convenient to think of the cells as fixed in a binary tree and the mutations as traveling through the tree in a direction which takes them from the root to the leaves. Because of this we will often refer to the sequence of stem cells as the stem cell and we fix the Poisson processes that are marking the cells on particular locations in the tree.

The rates at which the stem cell acquires type-1 and type-2 mutations are $u_1$ and $u_2$ respectively. The rates at which the daughter cells get type-1 and type-2 mutations are $v_1$ and $v_2$ respectively. Each of the rates are functions of $N$ and will approach 0 as $N$ approaches infinity. We will always consider what happens as $N$ goes to infinity. All limits will be assumed as taking $N$ to infinity unless otherwise stated.

We let $\tau(A_i)$ be the first time that any cell gets a type-2 mutation where $A_i$ refers to a model. We call a type-1 mutation to a cell which has a type-2 descendant successful. A type-1 mutation to a stem cell is always successful and a type-1 mutation to a daughter is successful if the daughter has a type-2 descendent before its progeny is washed from the population. We will call the successful type-1 mutation whose type-2 descendant is the first type-2 to occur the cancer causing type-1 mutation. Being the cancer causing type-1 mutation is not equivalent to being the first successful type-1 mutation. We also define random variables $\sigma(A_i)$ and $\rho(A_i)$ to be the depth of the colon at which the cancer causing type-1 and first type-2 mutations occur, respectively. More precisely, if the cancer causing type-1 mutation occurs in generation $j$ then we define $\sigma(A_i) = j/l$ and if the first type-2 mutation occurs in generation $k$ then we define $\rho(A_i) = k/l$. If the cancer causing type-1 mutation or first type-2 mutation occur on the stem cell then $\sigma(A_i) = 0$ or $\rho(A_i) = 0$ respectively.

The above establishes most of the notation that will be used throughout this paper, but some more will be included here. For any real number $a$ we define $a^+ = a \vee 0$. For functions $f(x)$ and $g(x)$ we will denote the limits $f(x)/g(x) \rightarrow 0$, $f(x)/g(x) \rightarrow 1$, and $f(x)/g(x) \rightarrow \infty$ as $x \rightarrow \infty$ by $f \ll g$, $f \sim g$ and $f \gg g$ respectively. We will also assume that there always exists a constant $\alpha > 0$ such that when $\epsilon > 0$ we have
\begin{equation}
\label{eq}
N^{-\alpha-\epsilon} \ll v_2 \ll N^{-\alpha+\epsilon}.
\end{equation}
If $\alpha = 0$ then the mutation rates are too fast to be realistic. To reduce the number of subscripts, we will use $\log x$ for $\log_2 x$. We will use $\rightarrow_d$ to denote convergence in distribution and $\rightarrow_p$ to denote convergence in probability.

One of the two goals of this paper is to find the asymptotic distribution of $\tau(H_1)$ as $N$ approaches infinity. Similar work has been done for the Moran model by Schweinsberg in \cite{Sch} and Durrett, Schmidt and Schweinsberg in \cite{DSS} in which more general results have already been found.  In \cite{Kom}, Komarova makes a connection between the Moran model and the hierarchical one as follows: In the Moran model a mutation may undergo fixation, meaning it spreads throughout the entire population through the birth-death process and all of the cells are the same type. Because the last generation is always removed in the hierarchical model, the only way to get fixation is if the stem cell gets mutated. These are the cases ss and sd. In these cases the mutation will spread throughout the population in $l$ time units. In the Moran model it is also possible that the mutations undergo what is called stochastic tunneling. This is when multiple mutations are acquired before they fixate. This is analogous to daughter cells acquiring two mutations before the stem cell mutates in the hierarchical model. This is the dd case and can also happen in the sd case if the second mutation occurs before the first has time to fixate (which is expected to happen when $\alpha < 1$). The rate at which daughter cells get successful type-1 mutations is given heuristically in \cite{Kom} to be
$$\sum_{i=1}^{l} v_1 2^{i-1}(1-e^{-v_2 (2^{l-i+1}-2)}).$$
One may arrive at this rate by noting that the $i^{th}$ generation has $2^{i-1}$ cells which get type-1 mutations at rate $v_1$. Each of the cells will have $2^{l-i+1}-2$ descendants which live for one time each and get type-2 mutations at rate $v_2$. The distribution of $\tau(H_1)$ will be one part of the main theorem.

Our second goal is to determine which cells obtain the mutations that lead to cancer. The location of the mutations can be essential to the treatment of cancer. As an example, studies of the effects of the drug imatinib on chronic myeloid leukemia have shown that leukemic stem cells will most likely not cause tumors but rather that a tumor is a result of a mutation on one of the daughter cells, see Dingli and Michor \cite{DMi} and Michor \cite{Mic2}. Imatinib treats leukemic daughter cells but not leukemic stem cells. So while using imatinib problems arising from cancer are prevented. However, patients cannot stop treatment because the leukemic stem cells will continue producing new leukemic daughter cells. Therefore, the location of where the mutations occur may play a pivotal role in determining how to treat the cancer. This is the other part of the main theorem in which we determine the limiting distributions of $\sigma(H_1)$ and $\rho(H_1)$.

According to Komarova in \cite{KCh} there are four cases that are particularly interesting from a biological viewpoint.
\begin{enumerate}
\item The null-model. In this model all of the mutation rates are equal. This model is the easiest to work with: $u_1 = u_2 = v_1 = v_2$.
\item Chromosomal instability. In this case the probability of getting a second mutation is greater than that of getting the first mutation, but the rates do not differ between stem cells or daughter cells: $u_1=v_1<u_2=v_2$.
\item Stem cells have a lower mutation rate: $v_1=v_2 > u_1=u_2$.
\item The problem of de-differentiation. In this case the daughter cells have a slower mutation rate. There are two scenarios for this: $v_1=v_2 < u_1=u_2$ or $v_1=u_1=u_2 > v_2$.
\end{enumerate}
We impose the following restriction on the mutation rates: $u_1 \leq u_2$ and $v_1 \leq v_2$. This will cover all of the above scenarios except for the second of the de-differentiation cases.

The following theorem is the goal of this paper. Recall that $\alpha$ is the number from $\eqref{eq}$.
\begin{Theo} \label{mainTheorem}
Recall that all limits are taken as $N$ goes to infinity. Let $X$ be a random variable which has the exponential distribution with parameter 1 and let $Y$ be a random variable which has the Rayleigh distribution so that $P(Y \leq t) = 1 - e^{-t^2 / 2}$ for any $t > 0$.
\begin{enumerate}
\item If $v_1 v_2 \ll 1/N (\log N)^2$ and $v_1 v_2 N \log N \gg u_1$ then $(\alpha \wedge 1) v_1 v_2 N (\log N) \tau(H_1) \rightarrow_d X$. The distribution of $\sigma(H_1)$ converges to the uniform distribution on $((1-\alpha)^+, 1]$ and $\rho(H_1)$ converges in probability to 1.
\item If $1/N (\log N)^2 \ll v_1 v_2 \ll 1/N$ and $\sqrt{v_1 v_2 N} \gg u_1$ then $\sqrt{v_1 v_2 N} \tau(H_1) \rightarrow_d Y$. Both $\sigma(H_1)$ and $\rho(H_1)$ converge in probability to 1.
\item If $v_1 v_2 \gg 1/N$ then $\sqrt{v_1 v_2 N} \tau(H_1) \rightarrow_d Y$. Both $\sigma(H_1)$ and $\rho(H_1)$ converge in probability to 1.
\item If we have the following two conditions:
\begin{itemize}
\item Either $v_1 v_2 \ll 1/N(\log N)^2$ and $u_1 \gg v_1 v_2 N \log N$ or $1/N(\log N)^2 \ll v_1 v_2 \ll 1/N$ and $u_1 \gg \sqrt{v_1 v_2 N}$
\item Both $u_2 \ll 1/\log N$ and $u_2 \ll N v_2$
\end{itemize}
then $u_1 \tau(H_1) \rightarrow_d X$. The probability that the first mutation occurs on the stem cell converges to 1 and $\rho(H_1)$ converges in probability to $\alpha \wedge 1$.
\item If we have the following two conditions:
\begin{itemize}
\item Either $v_1 v_2 \ll 1/N(\log N)^2$ and $u_1 \gg v_1 v_2 N \log N$ or $1/N(\log N)^2 \ll v_1 v_2 \ll 1/N$ and $u_1 \gg \sqrt{v_1 v_2 N}$
\item Either $u_2 \gg 1/\log N$ or $u_2 \gg N v_2$
\end{itemize}
then $u_1 \ll u_2$ implies $u_1 \tau(H_1) \rightarrow_d X$. On the other hand, if $u_1 \sim A u_2$ for some $A \geq 1$ then let $Z$ be an exponentially distributed random variable with parameter $1/A$ which is independent of $X$. Then $u_1 \tau(H_1) \rightarrow_d X+Z$. In either case, the probability that both mutations occur on the stem cell converges to 1.
\end{enumerate}
\end{Theo}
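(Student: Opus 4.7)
The plan is to prove each of the five cases by identifying the dominant mechanism for acquiring two mutations, computing a (possibly time-dependent) hazard rate for the first type-2 event, and reading off the limiting law. Let
\[ R_{dd} = \sum_{i=1}^{l} v_1 2^{i-1}\bigl(1-e^{-v_2(2^{l-i+1}-2)}\bigr), \]
the heuristic rate of ``successful'' type-1 mutations on daughter cells given in the introduction. The five cases partition the parameter space according to whether $u_1$ wins, draws, or loses to $R_{dd}$, and, in the stem-cell-wins regime, according to which of $u_2$, $Nv_2$, and $1/\log N$ controls the second mutation. In every case I expect $\sigma(H_1)$ and $\rho(H_1)$ to follow from a size-biased generation density computed from the same hazard analysis.

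For cases 1--3 the relevant hypothesis forces the dd mechanism. In case 1 I split $R_{dd}$ at the crossover $i^* \sim (1-\alpha)l$ where $v_2\cdot 2^{l-i+1}\asymp 1$: the saturated head contributes $O(v_1 N^{1-\alpha})$ and is negligible, while the unsaturated tail yields $R_{dd}\sim(\alpha\wedge 1)v_1 v_2 N\log N$ with each generation $i>(1-\alpha)l$ contributing equally. A second-moment bound combined with a Poisson coupling then shows that the number of successful type-1 mutations in $[0,t/R_{dd}]$ is asymptotically Poisson with mean $t$, which yields the exponential limit for $\tau(H_1)$; normalizing the per-generation contributions gives the uniform distribution of $\sigma(H_1)$ on $((1-\alpha)^+,1]$, and, conditional on the cancer-causing type-1 being at generation $i$, its first type-2 descendant concentrates at the leaves because the geometric sum $\sum_{k=0}^{l-i}2^k v_2$ is dominated by its last term. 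In cases 2 and 3 the time $\tau(H_1)$ is so short that the type-1 population has not yet reached its stationary profile: summing $v_1 2^{i-1}\cdot 2^k$ over mutation age $k\le t$ and generation $i\le l-k$ gives $E[T_1(t)]\sim v_1 N t$, so the hazard is $R(t)\sim v_1 v_2 N t$; integrating yields $P(\tau>t)\sim\exp(-v_1 v_2 N t^2/2)$, the Rayleigh limit. In these regimes the dominant contribution comes from the highest generations, so $\sigma(H_1),\rho(H_1)\rightarrow_p 1$.

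For cases 4 and 5 the opposite inequality $u_1\gg R_{dd}$ holds, forcing the first type-1 mutation to land on the stem cell at time $T_1\sim\mathrm{Exp}(u_1)$. Conditioning on $T_1$, the type-1 subtree grows to size $2^s\wedge N$ at time $T_1+s$, so the second-mutation hazard is $u_2+v_2(2^s\wedge N)$. In case 4, $u_2\ll 1/\log N$ and $u_2\ll Nv_2$ imply that the daughter term dominates and the cumulative hazard first reaches order one at $s^*=\log(1/v_2)\wedge l=(\alpha\wedge 1)l$, yielding $\rho(H_1)\rightarrow_p\alpha\wedge 1$; since $u_1\le u_2\ll 1/\log N$ gives $s^*\le l\ll 1/u_1$, the post-$T_1$ phase is negligible and $u_1\tau(H_1)\rightarrow_d X$. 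In case 5 either $u_2\gg 1/\log N$ (the stem-cell type-2 fires before the subtree can spread) or $u_2\gg Nv_2$ (the stem cell outpaces the saturated daughter pool), so the second mutation sits on the stem cell at time $T_1+T_2$ with $T_2\sim\mathrm{Exp}(u_2)$. When $u_1\ll u_2$ this collapses to $u_1\tau\rightarrow_d X$; when $u_1\sim Au_2$ one gets $u_1\tau\rightarrow_d X+Z$ with $Z\sim\mathrm{Exp}(1/A)$ independent of $X$.

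The hard part will be making rigorous the Poisson approximation that underlies $R_{dd}$ in cases 1--3. The events ``the lineage of a given type-1 mutation produces a type-2 before apoptosis'' are only approximately independent: nested lineages share descendants, a single type-1 may have several type-2 descendants, and the analysis must stop at the first type-2 rather than continuing indefinitely. I plan to control these by bounding the second moment of the number of cancer-causing type-1 events so that $\Var/E^2\rightarrow 0$, then coupling to a genuine Poisson point process by a Stein--Chen argument, borrowing techniques from \cite{Sch} and \cite{DSS}. A secondary technical point is the concentration of the random integrated hazard $\int_0^t v_2 T_1(s)\,ds$ around its expectation $v_1 v_2 N t^2/2$ in cases 2 and 3, which is required for the Rayleigh limit and demands a variance computation of $T_1$ that is uniform across the relevant ranges of $v_1$ and $v_2$.
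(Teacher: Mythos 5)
Your regime decomposition, the hazard computations, and the resulting limit laws all agree with the paper's, so the roadmap is sound; the genuine divergence is in how you handle the dependence among daughter lineages, which you correctly flag as the technical crux. You propose to attack it head-on with a second-moment bound and a Chen--Stein coupling to a Poisson process, in the style of \cite{Sch} and \cite{DSS}. The paper instead sidesteps Poisson approximation entirely: it introduces an auxiliary model $H_2$ in which each daughter cell carries a counter and every type-1 mutation gets its own fresh type-2 Poisson process, so that successful type-1 mutations form an \emph{exact} (time-inhomogeneous) Poisson process whose rate is the sum $R_{dd}$ you wrote down; a short coupling argument (the probability of ever rejecting the cancer-causing type-1 mutation is $O(v_1 \log N) \to 0$, since any cell has at most $\log N$ ancestors) then transfers the result back to $H_1$. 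The paper's device buys exactness and eliminates the variance estimates you would otherwise need, at the cost of one extra model and one coupling lemma; your route should also work but will be heavier, and note that the second-moment method alone gives concentration of counts, not the distributional limit of the first arrival time, so the Chen--Stein step is not optional. Two smaller points: in case 4 your argument that the post-$T_1$ phase is negligible invokes only $s^* \leq l \ll 1/u_1$, but the cumulative daughter hazard is merely of order one at $s^*$ when $\alpha \geq 1$, so you must also use $u_1 \leq u_2 \ll N v_2$ to kill the exponential tail beyond $s^*$ (as the paper does via the auxiliary model $M_2'$ in which type-2 mutations are suppressed for $\log N$ time units); and to make the regime comparisons rigorous you need a lemma to the effect that if $\alpha_n X_n \rightarrow_d X$, $\beta_n Y_n \rightarrow_d Y$ with $X_n, Y_n$ independent and $\alpha_n \ll \beta_n$, then $P(X_n \geq Y_n) \to 1$, which is the paper's Lemma \ref{ConLem} and is implicit but unstated in your sketch.
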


The first three cases in Theorem \ref{mainTheorem} are what happens when the probability that the cancer causing type-1 mutation occurs on a daughter cell converges to 1. Case 4 gives the results for the sd regime and case 5 gives the results for the ss regime.

The third case is a result of fast mutation rates. That is, there will be so many mutations that the probability of two mutations occurring before the model even has time to split once will converge to 1. This reduces to computing the waiting time for the first of $N-1$ Poisson processes to receive two hits.

In both the first and second cases the probability that $\tau(H_1)$ goes to infinity converges to 1. The results of these two cases rely on whether or not $P(\tau(H_1) < \log N)$ converges to 0 or 1. As for the first case, $P(\tau(H_1) < \log N) \rightarrow 0$. The distribution of $\sigma(H_1)$ arises from a balance between the large number of cells in the later generations versus the large number of descendants of cells in the earlier generations.

For the second case $P(\tau(H_1) < \log N) \rightarrow 1$. In this case the mutations occur fast enough that the number of descendants the cells have is not as important. This is why the cancer causing type-1 mutation will occur in the later generations.

In both the first and second cases the second mutation will occur near the top of the crypt. This is because most of the cells are at the top of the crypt. The distribution of $\tau(H_1)$ may be best understood through the following picture:

\setlength{\unitlength}{.3in}
\begin{picture}(5,6.5)(5,5)
\put(7.5,5.3){\makebox(1,1){\footnotesize An Alternative View of the Model}}
\put (6,6.7){\line(0,1){3.3}}
\put (5.7,7){\line(1,0){10.3}}
\put (5.5,7){\makebox(0,0){0}}
\put (5.5,10){\makebox(0,0){$l$}}
\put (5,7.8){\makebox(0,0){$(1-\alpha)l$}}
\put (5.7,10){\line(1,0){10.3}}
\put (6,6.5){\makebox(0,0){0}}
\put (8,6.5){\makebox(0,0){$t_1$}}
\put (14.2,6.5){\makebox(0,0){$t_2$}}
\put (8.5,8.5){\line(1,1){1.5}}
\put (6,7.8){\line(1,0){6}}
\put (12,7.8){\line(1,1){2.2}}
\put (14.2,6.7){\line(0,1){0.3}}
\put (10,10){\circle{0.15}}
\put (8.5,8.5){\circle{0.15}}
\put (9.7,10){\circle{0.15}}
\put (9,9.3){\circle{0.15}}
\put (9,9.3){\line(1,1){0.7}}
\put (8,6.7){\line(0,1){0.3}}
\put (6,8){\line(1,1){2}}
\end{picture}

The horizontal axis is time and the vertical axis is cell generation. The circles represent cell mutations. The circles within the rectangle represent successful type-1 mutations and the other circles connected to these by a diagonal line which are located at the top of the graph represent their type-2 descendants. The type-2 mutations are at the top of the graph because the later generations are where we expect the type-2 mutations to occur. Likewise, the type-1 mutations are expected to occur in the last $\alpha l$ generations so they lie above the line marked at $(1-\alpha)l$. The infinite rectangle which is bounded between $(1-\alpha)l$ and $l$ vertically and only by 0 on the left horizontally will be dotted within by successful type-1 mutations according to a Poisson process of rate $v_1 v_2 N$. This Poisson process has a uniform rate horizontally because of the time independence of the mutation rates and uniform rate vertically because of the balance between the number of cells in the later generations and the number of descendants of cells in the earlier generations. Notice that in the picture the cancer causing type-1 mutation is not the first successful type-1 mutation.

At times $t_1$ and $t_2$ there are diagonal lines coming out of the top of the graph that enclose a region of the rectangle. To have a type-2 cell by time $t_1$ we must have a successful type-1 mutation in the corresponding triangle. Likewise, to have a type-2 cell by time $t_2$ we must have a successful type-1 mutation in the corresponding quadrilateral. Therefore, the rate at which type-2 mutations occur is converging to the area enclosed in the graph by time $t$ multiplied by the rate of successful type-1 mutations. For the first case we expect the time to get a second mutation to be much larger than $l$ which is represented by $t_2$. Because of this, the waiting time as marked on the graph will go infinitely to the right as $N$ goes to infinity and the quadrilateral will be approximately a rectangle since the missing bottom right corner will have negligible area. This will cause $\tau(H_1)$ to have an exponential distribution. For the second case we expect the time to get a second mutation to be much smaller than $l$ which is represented by $t_1$. In this case the area enclosed in the graph will always be a triangle so that the rate at which we expect to get a type-2 mutation is asymptotic to a function of $t^2$. This results in convergence to the Rayleigh distribution. As $N$ goes to infinity the triangle will be squeezed into the upper left corner.

The convergence of $\sigma(H_1)$ in part 1 is particularly interesting. The location of the cancer causing type-1 mutation is not immediately obvious because the generations with large numbers of cells have fewer descendants on which a type-2 mutations might occur. This result reveals how the high rates of type-1 mutations occurring on later generations balances with the high probability of success of type-1 mutations which occur on earlier generations.

In all but one case $\rho(H_1)$ converges to 1. This happens because of the large number of cells in the later generations. The exception in case 4 is caused by having a low $v_1$ and high $u_1$ and $v_2$. The stem cell will get the first mutation because the daughter cells are slow to acquire type-1 mutations, but the daughter cells acquire type-2 mutations fast enough that they will get a type-2 mutation before all of the daughters inherit the type-1 mutation from the stem cell.

There are many boundary cases and most of them are not included in this paper, where we use the term boundary case to refer to the boundary between two of the conditions. That is, if $v_1 \ll 1/N$ gives one result and $v_1 \gg 1/N$ gives another, we would consider $v_1 \sim A/N$ for some constant $A$ to be a boundary case. If included, the boundary cases would make up the bulk of this paper. One reason for this is that our variables $\{v_1, v_2, u_1, u_2\}$ span a four dimensional space so that the regions will have many boundaries. Moreover, sometimes three regions intersect in the same place. It does not seem that there would be any special difficulties in computing most of these boundary cases and that they could be done with the same methods used in this paper.

The following proposition gives the results for the null-model, including results for the boundary cases.
\begin{Prop} \label{nullProp}
Let $\mu = u_1 = u_2 = v_1 = v_2$. Let $X$ be a random variable which has the exponential distribution with parameter 1. Let $Y$ be a random variable which has the Rayleigh distribution.
\begin{enumerate}
\item If $\mu \ll 1/N\log N$ then $\mu \tau(H_1) \rightarrow_d X$. The probability that the first mutation occurs on the stem cell converges to 1 and $\rho(H_1)$ converges in probability to 1.
\item If $\mu \sim A/N\log N$ then $(1+A) \mu \tau(H_1) \rightarrow_d X$. Let $\xi$ be a Bernoulli random variable such that $P(\xi = 1) = A/(1+A)$ and $P(\xi = 0) = 1/(1+A)$. Let $U$ be a random variable with uniform distribution on $[0,1]$. Then
    $$\sigma(H_1) \rightarrow_d U \xi$$
    and
    $$\rho(H_1) \rightarrow_d \xi + (\alpha \wedge 1)(1-\xi).$$
\item If $1/N\log N \ll \mu \ll 1/\sqrt{N}\log N$ then $(\alpha \wedge 1) \mu^2 N (\log N) \tau(H_1) \rightarrow_d X$. The distribution of $\sigma(H_1)$ converges to a uniform distribution on $((1-\alpha)^+, 1]$ and $\rho(H_1)$ converges in distribution to 1.
\item If $\mu \sim A/\sqrt{N}\log N$ then
    $$\lim P(\tau(H_1)/\log N \leq t) = (1-e^{-A^2 t^2/2})1_{[0,1/2]}(t) + (1-e^{-A^2 t/2 +  A^2/8})1_{(1/2,\infty)}(t).$$
    Let $Z$ be a random variable with density
    $$f(x) = \left(\int_{1-x}^{1/2}A^2e^{-A^2 t^2/2}dt+2e^{-A^2/8}\right)1_{[1/2,1]}(x).$$
    As $N$ goes to infinity $\sigma(H_1)$ converges in distribution to $Z$ and $\rho(H_1)$ converges in probability to 1.
\item If $1/\sqrt{N}\log N \ll \mu \ll 1/\sqrt{N}$ then $\sqrt{N} \mu \tau(H_1) \rightarrow_d Y$. Both $\sigma(H_1)$ and $\rho(H_1)$ converge in distribution to 1.
\item If $\mu \sim A/\sqrt{N}$ then for each fixed time $t > 0$ there exist constants $c$ and $C$ such that $\liminf P(\tau(H_1) \leq t) \geq c > 0$ and $\limsup P(\tau(H_1) \leq t) \leq C < 1$. Both $\sigma(H_1)$ and $\rho(H_1)$ converge in probability to 1.
\item If $1/\sqrt{N} \ll \mu$ then $\sqrt{N} \mu \tau(H_1) \rightarrow_d Y$. Both $\sigma(H_1)$ and $\rho(H_1)$ converge in probability to 1.
\end{enumerate}
\end{Prop}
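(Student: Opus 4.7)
The overall strategy is to reduce to Theorem~\ref{mainTheorem} by substituting $u_1=u_2=v_1=v_2=\mu$ and reading off the value of $\alpha$ from $v_2=\mu\sim N^{-\alpha+o(1)}$. Cases~1,~3,~5, and~7 are interior to regions of Theorem~\ref{mainTheorem} and are essentially corollaries. Case~1 satisfies the hypotheses of Case~4 of the main theorem because $u_1=\mu\gg\mu^2 N\log N=v_1v_2 N\log N$, $u_2=\mu\ll 1/\log N$, and $u_2\ll N\mu=Nv_2$; since $\mu\ll 1/(N\log N)$ forces $\alpha\ge 1$, the factor $\alpha\wedge 1$ equals $1$. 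Case~3 falls into Case~1 of the theorem (with $\alpha\in(1/2,1)$), Case~5 into Case~2 (with $\alpha\in(0,1/2)$), and Case~7 into Case~3, each by routine verification of the relevant inequalities.

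The boundary cases require more work. In Case~2 ($\mu\sim A/(N\log N)$) I would combine two independent mechanisms: the stem cell produces a type-1 at rate $u_1=\mu$ and, conditional on this, the type-2 descendant follows in time $O(\log N)=o(1/\mu)$ by the Case~4 analysis; meanwhile successful daughter type-1 mutations arrive at asymptotic rate $v_1v_2N\log N\sim A\mu$ and are followed by a type-2 within $o(1/\mu)$ time by the Case~1 analysis. The minimum of these two independent exponential clocks gives rate $(1+A)\mu$, and the Bernoulli $\xi$ records which mechanism fires first, with the conditional laws of $\sigma$ and $\rho$ on each branch inherited from the corresponding case of the main theorem. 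In Case~6 ($\mu\sim A/\sqrt N$) we have $v_1v_2N\sim A^2$ of order one, so both the per-unit-time rate of successful type-1 mutations and the conditional probability of a quick type-2 are $\Theta(1)$; a direct Poisson sandwich between rates $c<A^2<C$ yields bounds on $P(\tau(H_1)\le t)$ that stay away from both $0$ and $1$, while $\sigma$ and $\rho$ converge to $1$ in probability because nearly all cells lie near the top of the crypt.

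The main obstacle is Case~4 ($\mu\sim A/(\sqrt N\log N)$), which sits on the boundary between the triangle (Rayleigh) and rectangle (exponential) regimes and requires making the picture following Theorem~\ref{mainTheorem} rigorous at the critical scale. Here $\alpha=1/2$. Rescale time and generation by $\log N$ and set $T=t/\log N$. Successful type-1 mutations form a Poisson process of intensity $v_1v_2N\to A^2$ per unit rescaled area in the rectangle $[0,\infty)\times[1/2,1]$, and a point $(S,Y)$ becomes a cancer generator at rescaled time $S+1-Y$. The area $Q(T)$ of the quadrilateral of cancer-generating points at time $T$ equals $T^2/2$ for $T\le 1/2$ and $T/2-1/8$ for $T>1/2$, and the piecewise c.d.f.\ of $\tau/\log N$ is then $1-\exp(-A^2 Q(T))$. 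For the density $f$ of $Z$, note that conditional on the first arrival time being $T$ the $Y$-coordinate of that first point is uniform on $[\max(1-T,1/2),1]$; integrating against the density of the first arrival time on each piece gives the integral $\int_{1-x}^{1/2}A^2 e^{-A^2 T^2/2}\,dT$ on $[0,1/2]$ and the constant $2e^{-A^2/8}=A^2 e^{A^2/8}\int_{1/2}^\infty e^{-A^2 T/2}\,dT$ on $[1/2,\infty)$, reproducing $f$ exactly. The delicate points are the uniformity of the Poisson approximation as the quadrilateral changes shape at $T=1/2$, and the argument that once a successful type-1 exists, the type-2 follows within $o(\log N)$ time so that $\rho\to 1$.
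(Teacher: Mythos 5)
Your proposal is correct and follows essentially the same route as the paper: parts 1, 3, 5, 7 are read off from Theorem \ref{mainTheorem} exactly as you describe, part 2 is the superposition of the stem-cell clock and the daughter successful-type-1 clock (the paper makes your ``independence'' of the two mechanisms rigorous by sandwiching $H_2$ between two auxiliary models $M_{1,2}^{\pm}$), part 4 is the convergence of the rescaled point process of successful type-1 mutations to a rate-$A^2$ Poisson process on $[0,\infty)\times[1/2,1]$ with the same quadrilateral-area and conditional-uniform computations you give, and part 6 is a two-sided Poisson sandwich. The delicate points you flag in part 4 are precisely what the paper's Lemma \ref{PPLem} (generating-functional convergence) and the continuous-mapping argument supply.
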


Parts 1, 3, 5 and 7 of Proposition \ref{nullProp} follow directly from Theorem \ref{mainTheorem}. Parts 2, 4 and 6, the boundary cases, will be done in the last section. In part 2 the cancer causing type-1 mutation may occur on the stem cell or a daughter cell. The event $\xi = 1$ indicates that the cancer causing type-1 mutation occurred on a daughter cell. In part 4 the picture which appears below Theorem \ref{mainTheorem} is especially useful. We create a point process on $[0,\infty) \times [0,1]$ whose points are associated with the mutations. In Lemma \ref{PPLem} we show that the limiting distribution of this point process is a Poisson point process whose intensity is Lebesgue measure on $[0,\infty) \times [1/2,1]$. The main result of part 6 is that when $\mu \sim A/\sqrt{N}$ the mutations will occur in finite time. Because of this, the discreteness of the model cannot be ignored and computing the limit as $N$ goes to infinity becomes difficult. However, this is a degenerate case because the model no longer resembles a colon acquiring mutations.

In the next section we include some known results in probability that will be used throughout the paper. In section 3 we introduce a new model which will be coupled with $H_1$. Theorem \ref{mainTheorem} will be proved with this new model in place of $H_1$ and the coupling will give the results for $H_1$. The fourth section of this paper is devoted to getting results about the dd regime. The fifth section is on results about the sd and ss regimes. In section 6 we determine whether $\tau(H_1)$, $\sigma(H_1)$ and $\rho(H_1)$ will satisfy the results of the  dd, sd or ss regime. The proof of Theorem \ref{mainTheorem} is given at the end of section 6. The last section is a discussion of the boundary cases in the null model and a proof of Proposition \ref{nullProp}.

\section{Preliminaries}
In this section we include some general results about probability which we will make use of in the paper.

\begin{Lemma} \label{forConLem}
If $\{X_n\}_{n=1}^\infty$ is a sequence of nonnegative random variables such that $X_n \rightarrow_d X$ for some finite random variable $X$ and $\{k_n\}_{n=1}^\infty$ is a sequence of positive constants such that $k_n \rightarrow 0$ then $k_n X_n \rightarrow_p 0$.
\end{Lemma}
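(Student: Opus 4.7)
The plan is to reduce convergence in probability of $k_n X_n$ to $0$ to the tightness of the sequence $\{X_n\}$, since multiplying a tight sequence by constants tending to zero will crush it into any neighborhood of the origin. Tightness is immediate from $X_n \rightarrow_d X$ together with the finiteness of $X$: the limit distribution assigns no mass to infinity, and weak convergence on $[0,\infty)$ then forces the tails of $X_n$ to be controlled uniformly in $n$.

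More precisely, I would fix $\epsilon > 0$ and $\delta > 0$ and aim to show that $P(k_n X_n > \epsilon) < \delta$ for all sufficiently large $n$. First I would pick $M > 0$ large enough that $P(X > M) < \delta/2$ and $M$ is a continuity point of the distribution function of $X$; such $M$ exists because the discontinuity points of any distribution function form an at most countable set and $X$ is finite almost surely. Second, by the portmanteau characterization of convergence in distribution applied at the continuity point $M$, I would conclude that $P(X_n > M) \rightarrow P(X > M)$, so there exists $N_1$ with $P(X_n > M) < \delta$ for all $n \geq N_1$. Third, since $k_n \rightarrow 0$, there exists $N_2$ with $\epsilon / k_n > M$ for all $n \geq N_2$. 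For $n \geq \max(N_1, N_2)$, the event $\{k_n X_n > \epsilon\}$ is contained in $\{X_n > M\}$, giving $P(k_n X_n > \epsilon) < \delta$. Since $\epsilon$ and $\delta$ were arbitrary, $k_n X_n \rightarrow_p 0$.

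There is no real obstacle here: the only subtlety worth flagging is the need to choose $M$ at a continuity point of the limit distribution, since without that the convergence $P(X_n > M) \to P(X > M)$ is not guaranteed by the definition of convergence in distribution. Everything else is routine manipulation of the inequality $k_n X_n > \epsilon \iff X_n > \epsilon/k_n$.
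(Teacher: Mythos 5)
Your proof is correct and follows essentially the same route as the paper's: choose a continuity point $M$ of the limit distribution with small tail mass, transfer the tail bound from $X$ to $X_n$ via convergence in distribution, and use $k_n \to 0$ to contain $\{k_n X_n > \epsilon\}$ in $\{X_n > M\}$. The only cosmetic difference is that the paper works with the threshold $M\epsilon$ and the condition $k_n < 1/M$, which is the same containment argument in different clothing.
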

\begin{proof}
Let $\epsilon > 0$ and $\delta > 0$ be real numbers. Let $M$ be a real number such that the function $F(t) = P(X \leq t)$ is continuous at $M\epsilon$ and $P(X > M\epsilon) < \delta/2.$ Such an $M$ exists because the discontinuities of $F$ are countable. Choose $N_1$ so that if $n \geq N_1$ then $k_n < 1/M$. Choose $N_2$ so that if $n \geq N_2$ then $|P(X_n \leq M\epsilon)-P(X \leq M\epsilon)| < \delta/2.$ Then for $n \geq N_1 \vee N_2$
\nonumber
\begin{align}
P(k_n X_n > \epsilon) & \leq P(X_n/M > \epsilon) \\
& \leq |P(X_n/M > \epsilon)-P(X/M > \epsilon)|+P(X/M > \epsilon) \\
& \leq \delta.
\end{align}
\end{proof}

\begin{Lemma} \label{ConLem}
Let $\{\alpha_n\}_{n=1}^\infty$ and $\{\beta_n\}_{n=1}^\infty$ be sequences of positive numbers which converge to 0. Let $\{X_n\}_{n=1}^\infty$ and $\{Y_n\}_{n=1}^\infty$ be independent sequences of random variables and let $X$ and $Y$ be positive random variables such that $\alpha_n X_n \rightarrow_d X$ and $\beta_n Y_n \rightarrow_d Y$. If $\alpha_n \ll \beta_n$ then $P(X_n \geq Y_n) \rightarrow 1$.
\end{Lemma}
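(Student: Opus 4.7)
The plan is to rescale both $X_n$ and $Y_n$ by the larger normalization $\beta_n$ and to show that after this rescaling $\beta_n X_n$ blows up to $+\infty$ in probability while $\beta_n Y_n$ stays stochastically bounded, so that on an event of probability tending to $1$ we must have $\beta_n X_n > \beta_n Y_n$, which is the same event as $X_n > Y_n$.

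First I would observe that $\alpha_n \ll \beta_n$ means $\beta_n/\alpha_n \to \infty$, and write
\[
\beta_n X_n \;=\; \frac{\beta_n}{\alpha_n}\cdot(\alpha_n X_n).
\]
Since $X$ is a positive random variable, for any $\delta>0$ I can pick a continuity point $\eta>0$ of the distribution of $X$ with $P(X>\eta)>1-\delta/3$. By the hypothesis $\alpha_n X_n \to_d X$, for all $n$ sufficiently large $P(\alpha_n X_n > \eta) > 1-\delta/2$. Combining this with $\beta_n/\alpha_n \to \infty$, given any $M>0$ one has $(\beta_n/\alpha_n)\eta > M$ eventually, hence $P(\beta_n X_n > M) > 1-\delta/2$ for all large $n$. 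This shows $\beta_n X_n \to_p \infty$.

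Next, since $Y$ is a genuine (finite) random variable, I can choose $M>0$ a continuity point of the distribution of $Y$ with $P(Y\le M) > 1-\delta/4$; enlarging $M$ if necessary, the assumed convergence $\beta_n Y_n \to_d Y$ gives $P(\beta_n Y_n \le M) > 1-\delta/2$ for all large $n$. By the independence of the sequences $\{X_n\}$ and $\{Y_n\}$,
\[
P(\beta_n X_n > \beta_n Y_n) \;\ge\; P(\beta_n X_n > M)\,P(\beta_n Y_n \le M) \;\ge\; (1-\delta/2)^2 \;\ge\; 1-\delta
\]
for all sufficiently large $n$. Since $\beta_n>0$, this event coincides with $\{X_n > Y_n\} \subset \{X_n \ge Y_n\}$, so $P(X_n\ge Y_n)\to 1$.

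The only subtle point, and the part I would be careful about, is the use of the positivity of $X$: without it the event $\{\alpha_n X_n > \eta\}$ could have probability bounded away from $1$, and the argument that $\beta_n X_n \to_p\infty$ would fail. Everything else is a routine combination of the portmanteau theorem and independence.
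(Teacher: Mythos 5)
Your proof is correct and is essentially the paper's argument in mirror image: the paper rescales both variables by $\alpha_n$ so that $\alpha_n Y_n \rightarrow_p 0$ (via Lemma \ref{forConLem}) while $\alpha_n X_n \rightarrow_d X$ stays away from $0$ by positivity of $X$, whereas you rescale by $\beta_n$ so that $\beta_n X_n \rightarrow_p \infty$ while $\beta_n Y_n \rightarrow_d Y$ stays tight. Both versions then conclude identically by intersecting the two high-probability events and factoring the probability using the independence of $\{X_n\}$ and $\{Y_n\}$, and your remark that positivity of $X$ is the essential hypothesis matches the role it plays in the paper's proof.
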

\begin{proof}
First note that $P(X_n \geq Y_n) = P(\alpha_n X_n \geq \alpha_n Y_n)$. Also, $\alpha_n Y_n = (\alpha_n/\beta_n) \beta_n Y_n$ and $\alpha_n / \beta_n \rightarrow 0$ so $\alpha_n Y_n \rightarrow_p 0$ by Lemma \ref{forConLem}.

Let $\delta > 0$ and choose $\epsilon > 0$ such that the function $F(t) = P(X \leq t)$ is continuous at $\epsilon$ and $P(X > \epsilon) > 1-\delta/2$. We can choose $N_1$ such that if $n \geq N_1$ then $P(\alpha_n X_n > \epsilon) > 1-\delta$ by the definition of convergence in distribution. Choose $N_2$ such that if $n \geq N_2$ then $P(\alpha_n Y_n > \epsilon) < \delta$. Then for $n \geq N_1 \vee N_2$
\nonumber
\begin{align}
P(\alpha_n X_n > \alpha_n Y_n) & \geq P(\{\alpha_n X_n > \epsilon\} \cap \{\epsilon > \alpha_n Y_n\}) \\
& = P(\alpha_n X_n > \epsilon) P(\epsilon > \alpha_n Y_n) \\
& > (1-\delta)^2
\end{align}
where $\delta$ can be made arbitrarily small.
\end{proof}

\begin{Lemma} \label{CondLim}
Let $\{A_n\}_{n=0}^\infty$, $\{B_n\}_{n=0}^\infty$ and $\{C_n\}_{n=0}^\infty$ be sequences of events such that $\lim_{n \rightarrow \infty} P(A_n) = a > 0$, $\lim_{n \rightarrow \infty} P(B_n) = 1$ and $\lim_{n \rightarrow \infty} P(C_n) = 0$. Then
$$\lim_{n \rightarrow \infty} P(B_n|A_n) = 1 \mbox{ and } \lim_{n \rightarrow \infty}P(C_n|A_n) = 0.$$
\end{Lemma}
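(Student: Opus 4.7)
The plan is to unpack the conditional probabilities using $P(E \mid A_n) = P(E \cap A_n)/P(A_n)$ and then estimate the numerator by the unconditional probability of $E$ (or its complement), using that the denominator is bounded away from $0$.

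For the second claim, I would simply observe that $P(C_n \cap A_n) \leq P(C_n) \to 0$, and since $P(A_n) \to a > 0$, for all sufficiently large $n$ we have $P(A_n) \geq a/2$. Dividing gives $P(C_n \mid A_n) \leq 2P(C_n)/a \to 0$.

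For the first claim, I would use the complementary form. Write $P(B_n \cap A_n) = P(A_n) - P(B_n^c \cap A_n)$, and bound $P(B_n^c \cap A_n) \leq P(B_n^c) = 1 - P(B_n) \to 0$. Therefore $P(B_n \cap A_n) \to a$, and dividing by $P(A_n) \to a$ yields $P(B_n \mid A_n) \to 1$. Equivalently, one can apply the already-proven second part to the sequence $C_n := B_n^c$, which satisfies $P(C_n) \to 0$, to conclude $P(B_n^c \mid A_n) \to 0$ and hence $P(B_n \mid A_n) \to 1$.

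There is no real obstacle here; the lemma is a routine consequence of the identity $P(E \mid A_n) = P(E \cap A_n)/P(A_n)$ together with the monotonicity bound $P(E \cap A_n) \leq P(E)$ and the positivity of $\lim P(A_n)$. The only care required is to note that $P(A_n) \geq a/2$ eventually, so division is harmless in the limit.
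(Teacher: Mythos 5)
Your proof is correct and follows essentially the same route as the paper: bound $P(C_n \cap A_n)$ by $P(C_n)$, divide by $P(A_n)$ which is eventually bounded away from $0$, and then deduce the $B_n$ statement by applying the same argument to $B_n^c$. Nothing further is needed.
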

\begin{proof}
First note that
$$\lim_{n \rightarrow \infty} P(A_n \cap C_n) \leq \lim_{n \rightarrow \infty} P(C_n) = 0.$$
For $n$ large enough $P(A_n)$ is never 0, so $\lim_{n \rightarrow \infty} P(C_n|A_n) = \lim_{n \rightarrow \infty} P(C_n \cap A_n)/P(A_n) = 0.$

Likewise, $\lim_{n \rightarrow \infty} P(B_n^C) = 0$ so $\lim_{n \rightarrow \infty} P(B_n^C|A_n) = 0$. Therefore, the same reasoning yields $\lim_{n \rightarrow \infty} P(B_n|A_n) = 1$.
\end{proof}

\section{A Useful Model}
There is a similar model $H_2$ which will be coupled with model $H_1$. This model is the same as $H_1$ except in the way the daughter cells acquire type-2 mutations. Label the daughter cells $D_1, D_2, \dots D_{N-1}$. In model $H_2$ each daughter cell $D_i$ has a counter $C_i$ starting at 0 and is acted on by a sequence of Poisson processes $\{P_n^i\}_{n=1}^{\infty}$ which determine the type-2 mutations. All of the Poisson processes are independent. In this model, when a type-1 mutation occurs on a daughter cell $D_i$ it increases the counter $C_i$ by 1. This is considered as a type-1 mutation. If a type-1 mutation increases the counter to $n$, it is the $n^{th}$ type-1 mutation on the cell. When the counter $C_i$ has reached $n$, any type-2 mutations that would occur according to the Poisson processes $P_1^i, P_2^i, \dots P_n^i$ are accepted as type-2 mutations on cell $D_i$. Any type-2 mutations that would occur according the the Poisson processes $P_{n+1}^i, P_{n+2}^i, \dots$ are rejected. If a type-2 mutation occurs on cell $D_i$ as a result of the Poisson process $P_n^i$, then the $n^{th}$ type-1 mutation according to $C_i$ is considered to be successful. If the first type-2 mutation on a cell is a result of the Poisson process $P_n^i$, then the $n^{th}$ type-1 mutation according to $C_i$ is the cancer causing type-1 mutation. However, a type-1 mutation on the stem cell does not have a counter. Once a type-1 mutation has spread from the stem cell to a daughter cell the daughter cell can no longer accumulate type-1 mutations and the model is the same as model $H_1$.

There is an extra convenience embedded in the model $H_2$. We can consider the $N-1$ Poisson processes that mark the type-1 mutations on the individual daughter cells as one Poisson process which marks the mutations on the population of daughter cells whose measure is time independent. The rate of the Poisson process is $v_1 (N-1)$ and when a type-1 mutation occurs it occurs on any particular cell with probability $1/(N-1)$. In the Hierarchical model, $H_1$, the mutations are suppressed on type-1 cells so that the rate at which the population of daughter cells is acquiring type-1 mutations depends on how many type-1 cells there are at the time.

We couple $H_1$ and $H_2$ by allowing the same Poisson processes to mark the mutations on the cells within each model. The Poisson processes that mark the stem cells are the same. If a daughter cell has inherited a type-1 mutation from the stem cell then the Poisson processes marking type-2 mutations on the cell are the same in each model. The Poisson processes marking type-1 mutations on daughter cells are the same. The Poisson processes marking type-2 mutations on daughter cells in model $H_1$ are the same as the Poisson processes $P_1^i$ in model $H_2$ so long as the daughter cells did not inherit their type-1 mutations from the stem cell. There are no analogous Poisson processes in model $H_1$ for the $N-1$ sequences of Poisson processes $P_2^i, P_3^i, \dots$ in model $H_2$.

\begin{Lemma} \label{coupleLem}
Let the Poisson processes in models $H_1$ and $H_2$ be coupled as described above. Then $P(\tau(H_1) = \tau(H_2))$, $P(\rho(H_1) = \rho(H_2))$ and $P(\sigma(H_1) = \sigma(H_2))$ all converge to 1.
\end{Lemma}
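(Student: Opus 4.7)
The plan is to isolate the only mechanism by which $H_1$ and $H_2$ can differ under the coupling and show that this mechanism activates with vanishing probability. By construction the two models share every Poisson mark on the stem cell, every type-1 Poisson mark on each daughter position, and the ``first'' type-2 process $P_1^i$ at each daughter position (provided the position's type-1 did not descend from the stem). Hence a discrepancy in any of $\tau$, $\sigma$, or $\rho$ can arise only if, at some daughter position, the counter $C_i$ reaches $2$ or more and one of the extra Poisson processes $P_n^i$ with $n \ge 2$ produces a type-2 mutation. Let $B_N$ denote the event that such an ``extra'' type-2 mutation occurs before a deterministic time $T_N$ chosen so that $P(\tau(H_1) \vee \tau(H_2) > T_N) \to 0$; the scaling of $T_N$ is dictated by the relevant case of Theorem \ref{mainTheorem}. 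On $B_N^c$, every accepted type-2 mark in $H_2$ corresponds under the coupling to one accepted in $H_1$ at the same cell at the same time, so the first type-2 mutation and its causative type-1 mutation are the same events in both models, giving $\tau(H_1) = \tau(H_2)$, $\sigma(H_1) = \sigma(H_2)$, and $\rho(H_1) = \rho(H_2)$ on $B_N^c$.

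The core step is to bound $P(B_N)$. For a daughter cell alive during an interval of length $\Delta$, the probability that at least two type-1 marks land on it and are then followed within the same interval by a mark of the associated $P_2^i$ process is $O(v_1^2 v_2 \Delta^3)$. Combining this with the fact that at most $N$ daughter positions are active at any time and applying a union bound gives $P(B_N) = O(N T_N v_1^2 v_2)$ when $T_N \gg 1$ (each cell's lifetime is $\Delta = 1$, and there are $T_N$ successive generations of cells) and $P(B_N) = O(N v_1^2 v_2 T_N^3)$ when $T_N \ll 1$ (no splitting occurs, so each of the $N-1$ cells lives for the full interval $\Delta = T_N$). Inserting the $T_N$ from Theorem \ref{mainTheorem} case by case then gives $v_1/\log N \to 0$ in case 1, $\sqrt{N v_1^3 v_2} \to 0$ in case 2 (using $v_1 v_2 \ll 1/N$ together with $v_1 \to 0$), $\sqrt{v_1/(N v_2)} \le 1/\sqrt{N} \to 0$ in case 3 (using $v_1 \le v_2$), and analogous bounds in cases 4 and 5 using the hypothesis that $u_1$ dominates either $v_1 v_2 N \log N$ or $\sqrt{v_1 v_2 N}$.

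The main obstacle is precisely the case-by-case verification above, together with the handling of case 3, where the time scale $T_N$ is itself small and the naive bound $O(N T_N v_1^2 v_2)$ is not sharp enough to close --- one must exploit the fact that within a short interval the probability of two Poisson marks is quadratic rather than constant in the elapsed time, yielding the stronger cubic-in-$T_N$ estimate. Once $P(B_N) \to 0$ is established in each regime, combining it with the pathwise equality of the two models on $B_N^c$ immediately yields the three convergences stated in the lemma.
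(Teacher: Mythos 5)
Your identification of the discrepancy mechanism is correct, and your overall strategy (bound the probability that any ``extra'' type-2 mutation occurs before a cutoff time $T_N$) is workable, but your central estimate is wrong because it ignores inheritance. You bound the probability that a single daughter cell produces an extra type-2 during its lifetime by $O(v_1^2 v_2 \Delta^3)$ with $\Delta = 1$, which requires both type-1 marks to land on that one cell within its one-unit lifetime. But type-1 mutations are passed to descendants, so the counter $C_i$ of a cell in generation $i$ counts type-1 marks accumulated over its entire ancestral lineage --- up to $\log N$ cells, each alive for one time unit. The probability that a given cell has $C_i \geq 2$ is therefore of order $(v_1 \log N)^2$, not $v_1^2$, and the correct union bound in the regimes with $T_N \gg 1$ is $O\bigl(N T_N v_1^2 v_2 (\log N)^2\bigr)$, not $O(N T_N v_1^2 v_2)$. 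Your case-by-case quantities inherit this error (e.g.\ in case 1 the bound is $v_1 \log N$, not $v_1/\log N$). As it happens the corrected bound still tends to $0$ in every case, since $v_1 \leq v_2 \ll N^{-\alpha+\epsilon}$ forces $v_1 (\log N)^2 \to 0$, so the gap is reparable --- but the estimate as written is not valid. A second, smaller issue: choosing $T_N$ so that $P(\tau(H_1) \vee \tau(H_2) > T_N) \to 0$ is mildly circular, since the distribution of $\tau(H_1)$ is what the lemma is ultimately needed to establish; you should instead use $\tau(H_1) \geq \tau(H_2)$ (the $H_2$ model accepts strictly more mutations) and pick $T_N$ from the $H_2$ results alone.

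For comparison, the paper's proof sidesteps both the time cutoff and the case analysis entirely: it conditions on which cell receives the cancer-causing type-1 mutation and observes that the models can disagree only if that particular mutation was rejected in $H_1$, i.e.\ only if some earlier type-1 mark already hit the at most $\log N$ time units of that cell's ancestry. This gives the uniform conditional bound $1 - e^{-v_1 \log N} \leq v_1 \log N$, which converges to $0$ under the standing assumption $(\ref{eq})$ with no reference to $T_N$ or to the five regimes of Theorem \ref{mainTheorem}. If you repair your estimate you get a correct proof, but the conditioning argument is substantially shorter and avoids the delicate case-3 analysis you flag as the main obstacle.
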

\begin{proof}
A type-2 mutation which occurs in model $H_2$ but not in $H_1$ is a result of the rejection of the type-1 mutation in model $H_1$ that has led to the type-2 mutation in $H_2$. This mutation is rejected because the cell on which the type-1 mutation was supposed to occur already was a type-1 cell. Any cell has at most $\log N$ ancestors, so the probability that a type-1 mutation is rejected is less than $e^{-v_1 \log N}$. Therefore, the probability of rejecting the type-1 mutation that causes the first type-2 mutation is converging to 0 as long as $v_1 \ll 1/\log N$. That is, if we number the cells $1,2,\dots,N$ and let $A_i$ be the event that the cancer causing mutation happens on cell $i$,
$$P(\tau(H_1) \neq \tau(H_2)) = \sum_{i=1}^N P(\tau(H_1) \neq \tau(H_2)|A_i)P(A_i) \leq \sum_{i=1}^N e^{-v_1 \log N} P(A_i) = e^{-v_1 \log N} \rightarrow 0.$$
%Note: If conditioning is not okay in the above, we could change the model so that the only Poisson process that causes type-2 mutations occurs on cell i.

If we do not have $v_1 \ll 1/\log N$ then because $v_2 \geq v_1$ we do not have $v_2 \ll 1/\log N$ either, with contradicts equation (\ref{eq}). Hence we only need to consider the case $v_1 \ll 1/\log N$.
\end{proof}

The rest of the work in proving Theorem \ref{mainTheorem} is in proving Theorem \ref{mainTheorem} with $H_2$ in place of $H_1$. Once this is done Theorem \ref{mainTheorem} follows from Lemma \ref{coupleLem}.

\section{The dd regime}
To understand the behavior in the dd regime, we consider a new model which is the same as $H_2$ except that mutations only occur on daughter cells. That is, there are no Poisson processes that cause mutations on the stem cells. This new model will be called model $M_1$. The purpose of this section is to prove Proposition \ref{ddRegime}.
\begin{Prop} \label{ddRegime}
Let $X$ be a random variable which has the exponential distribution with parameter 1. Let $Y$ be a random variable which has the Rayleigh distribution.
\begin{enumerate}
\item If $v_1 v_2 \ll 1/N (\log N)^2$ then $(\alpha \wedge 1) v_1 v_2 N (\log N) \tau(M_1) \rightarrow_d X$. The distribution of $\sigma(M_1)$ converges to a uniform distribution on $((1-\alpha)^+, 1]$ and $\rho(M_1)$ converges in probability to 1.
\item If $1/N (\log N)^2 \ll v_1 v_2 \ll 1/N$ then $\sqrt{N v_1 v_2} \tau(M_1) \rightarrow_d Y$. Both $\sigma(M_1)$ and $\rho(M_1)$ converge in probability to 1.
\item If $v_1 v_2 \gg 1/N$ then $\sqrt{N v_1 v_2} \tau(M_1) \rightarrow_d Y$. Both $\sigma(M_1)$ and $\rho(M_1)$ converge in probability to 1.
\end{enumerate}
\end{Prop}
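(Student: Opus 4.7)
The plan is to compute the rate $\lambda_N$ at which \emph{successful} type-1 mutations occur in $M_1$, and then branch according to how $\lambda_N$ and the type-1 to type-2 propagation time compare to the relevant scales. In $M_1$ the type-1 mutations form a rate-$v_1(N-1)$ Poisson process on the daughter positions (uniform in position), and a type-1 born at generation $i$ exposes the $2^{l-i+1}-2$ cell-time units of its subtree to independent rate-$v_2$ processes, so the success probability is $1-e^{-v_2(2^{l-i+1}-2)}$. Splitting at the crossover $2^{l-i+1}\sim 1/v_2$, i.e.\ $i\sim(1-\alpha)l$, the regime $v_2\cdot 2^{l-i+1}\ll 1$ dominates and contributes $v_1\cdot 2^{i-1}\cdot v_2\cdot 2^{l-i+1}=v_1 v_2 N$ per generation over $(\alpha\wedge 1)l$ generations, giving $\lambda_N\sim(\alpha\wedge 1)\,v_1 v_2 N\log N$.

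For case 1, the hypothesis $v_1 v_2\ll 1/N(\log N)^2$ makes $\lambda_N\log N\to 0$, so the delay of at most $l$ between a successful type-1 and its first type-2 descendant is $o(1/\lambda_N)$. Thinning the type-1 Poisson process by the (independent) success event produces, in the limit, a homogeneous Poisson process of rate $\lambda_N$, giving $\lambda_N\tau(M_1)\rightarrow_d X$. For $\sigma(M_1)$ the per-generation rate is asymptotically uniform on $\{\lceil(1-\alpha)^+ l\rceil,\ldots,l\}$ (the tail $i\le(1-\alpha)l$ contributes only $O(v_1 v_2 N)=o(\lambda_N)$), so $\sigma(M_1)\rightarrow_d \mathrm{Unif}((1-\alpha)^+,1]$. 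For $\rho(M_1)$, conditional on the successful type-1 at generation $i$ the type-2 appears at generation $j\in\{i+1,\ldots,l\}$ with weight proportional to the descendant cell-time $2^{j-i}$; this is dominated by $j=l$, so $\rho(M_1)\rightarrow_p 1$.

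For case 3, the scaling $\tau(M_1)\sim 1/\sqrt{Nv_1 v_2}\ll 1$ means no split occurs before $\tau(M_1)$ with probability $\to 1$, so the problem reduces to the minimum over the $N-1$ initial daughter positions of $T_i=\mathrm{Exp}(v_1)+\mathrm{Exp}(v_2)$. Since $P(T_i\le t)=\tfrac12 v_1 v_2 t^2+o(t^2)$ as $t\to 0$, a standard Poisson approximation gives $P(\tau(M_1)\le t/\sqrt{Nv_1 v_2})\to 1-e^{-t^2/2}$, i.e.\ $\sqrt{Nv_1 v_2}\,\tau(M_1)\rightarrow_d Y$. Because no cell splits, both mutations land on a uniformly random daughter position, of which $N/2$ sit in generation $l$ and the lower ones are exponentially smaller, so $\sigma(M_1)$ and $\rho(M_1)$ converge in probability to $1$.

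Case 2 ($1/N(\log N)^2\ll v_1 v_2\ll 1/N$, so $1\ll 1/\sqrt{Nv_1 v_2}\ll\log N$) is the main obstacle and I would handle it via the space-time Poisson picture described after Theorem \ref{mainTheorem}. For a type-1 at $(s,i)$ the subtree has approximately $2^{t-s}$ type-1 cells alive at time $t$, valid while $t-s\le l-i$, so the rate of type-2 events at time $t$ is $\approx v_2\cdot 2^{t-s}$. Multiplying by $v_1\cdot 2^{i-1}$ and summing over the admissible $i\le l-(t-s)$ gives, after the geometric series collapses, a type-2 intensity $\mu(t)\approx v_1 v_2 N t$ for $1\ll t\ll l$, so integrating yields expected first-arrival mass $v_1 v_2 N\, t^2/2$. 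A Poisson approximation then produces $P(\tau(M_1)\le t)\sim 1-\exp(-v_1 v_2 N\, t^2/2)$, i.e.\ $\sqrt{Nv_1 v_2}\,\tau(M_1)\rightarrow_d Y$. The intensity is dominated by $i\approx l-(t-s)$, which for $t\ll l$ is $l(1-o(1))$, so $\sigma(M_1)\rightarrow_p 1$, and $\rho(M_1)\rightarrow_p 1$ follows by the same geometric-weight argument as in case 1. The hardest technical step is converting these heuristic intensity calculations into a rigorous Poisson limit: one must control the discrete-versus-continuous doubling, the boundary layer near $i\approx l-(t-s)$, and the regime where the per-event success probability is no longer small; the conditions $v_1 v_2 N(\log N)^2\to\infty$ and $v_1 v_2 N\to 0$ are precisely what allow these error terms to be absorbed.
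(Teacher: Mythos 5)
Your proposal follows essentially the same route as the paper in all three parts: part~1 rests on the same computation of the successful type-1 rate $(\alpha\wedge 1)v_1v_2N\log N$ obtained by splitting the generation sum at the crossover $i\approx(1-\alpha)l$ (the paper's Lemma~\ref{longlem} and Corollary~\ref{longCor}), with the same ``delay $\le\log N$ is $o(1/\lambda_N)$'' step and the same per-generation-rate argument for $\sigma(M_1)$ and geometric-weight argument for $\rho(M_1)$ (Lemma~\ref{Kgen}); part~3 is identical (no split before time 1, minimum of $N-1$ i.i.d.\ sums of two exponentials, Taylor expansion). The one substantive caveat is part~2: what you give there is an accurate description of the strategy, but you explicitly defer the estimates, and those estimates \emph{are} the paper's proof of part~2 --- it writes $P(\tau(M_1)\le t/\sqrt{v_1v_2N})=1-e^{-f(N,t)}$, splits the sum over generations at $i=l-t/\sqrt{v_1v_2N}$ (your boundary layer), bounds the cumulative descendant count of a generation-$i$ cell by time $t$ between $2^t-1$ and $2^{t+2}-1$ (resp.\ the linear-growth expressions for $t\ge l-i$), and then extracts $t^2/2$ as the limit of the dominant term $\sum 2^{i-1}v_1v_2(t/\sqrt{v_1v_2N}-l+i)(2^{l-i+1}-2)$ via the bounds $x-x^2/2\le 1-e^{-x}\le x$, discarding the other terms using exactly the conditions you name. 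So the error terms you list are controllable and the conditions suffice, but a complete write-up of part~2 still requires carrying out that two-sided bound; also note the paper gets $\sigma(M_1)\rightarrow_p 1$ in part~2 more cheaply than your intensity-domination argument, by combining $\rho(M_1)\rightarrow_p 1$ with $P(\tau(M_1)>\delta\log N)\rightarrow 0$ and the observation that $\sigma<1-2\delta$, $\rho>1-\delta$ would force $\tau>\delta\log N$.
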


\begin{Lemma} \label{Kgen}
For any positive integer $k < l$ we have $P(\rho(M_1) \geq (l-k)/l) > 1-1/2^k$.
\end{Lemma}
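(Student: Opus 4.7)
The plan is to trace the first type-2 mutation back to the type-1 mutation in its ancestral lineage that produced it, and then analyze the sub-tree of descendants of the cell on which that type-1 occurred. In $M_1$ all mutations occur on daughter cells, so this cancer-causing type-1 mutation occurred on a specific cell at some generation $i$. Since the first type-2 cell must be a descendant of that cell, $\rho(M_1)\geq i/l$; in particular the case $i\geq l-k$ is immediate, and the nontrivial case $i<l-k$ will be handled by a direct computation inside the sub-tree.

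For that computation, I would use that the sub-tree contains $2^{j-i}$ cells at generation $j$ for each $j\in\{i,\ldots,l\}$, that each cell is alive for one unit of time, and that the type-2 Poisson processes on distinct cells are independent. Thus the probability of no type-2 event in generations $i,\ldots,l-k-1$ equals $\exp(-v_2(2^{l-k-i}-1))$, and the probability of no type-2 anywhere in the sub-tree equals $\exp(-v_2(2^{l-i+1}-1))$. Since the cancer-causing sub-tree is by definition the one containing the first type-2 of the model, conditioning on at least one type-2 occurring in it gives
\[
P\bigl(\text{first type-2 at gen}\geq l-k\,\bigm|\,\text{cancer-causing mutation at gen }i\bigr)=\frac{e^{-v_2(2^{l-k-i}-1)}-e^{-v_2(2^{l-i+1}-1)}}{1-e^{-v_2(2^{l-i+1}-1)}}.
\]

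The final step is to verify this ratio exceeds $1-1/2^k$. In the regime $v_2\to 0$ implied by \eqref{eq}, the ratio is asymptotically $1-(2^{l-k-i}-1)/(2^{l-i+1}-1)\geq 1-1/2^{k+1}$, where the last inequality uses $2^{l-i+1}-1\geq 2^{k+1}(2^{l-k-i}-1)$ for $i\leq l-k-1$; the gap between $1/2^{k+1}$ and $1/2^k$ is there to absorb both finite-$v_2$ corrections and any bias from conditioning on the sub-tree being cancer-causing. The hard part will be handling that conditioning carefully, since selecting the sub-tree whose first type-2 wins the race across lineages biases its conditional generation downward; I would resolve this by a regime split, noting that when $v_2$ is not small the first type-2 in the model occurs essentially immediately after the first type-1, whose generation is distributed proportional to $2^{j-1}$ over daughter cells and so lies in $\{l-k,\ldots,l\}$ with probability $(2^l-2^{l-k-1})/(2^l-1)>1-1/2^k$ already, while for small $v_2$ the single-sub-tree analysis above suffices with slack.
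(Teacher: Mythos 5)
Your decomposition is dual to the paper's and runs into two real obstructions. First, the displayed conditional probability is not an equality: conditioning on the sub-tree rooted at the generation-$i$ cell being the one that wins the race (i.e.\ produces the model's \emph{first} type-2, not merely \emph{a} type-2) biases that sub-tree's first type-2 toward earlier times and hence toward lower generations. You flag this yourself, but the proposed fix does not address it: your regime split distinguishes ``$v_2$ small'' from ``$v_2$ not small,'' whereas the bias is present precisely in the main regime where $v_2\rightarrow 0$ but $Nv_2\rightarrow\infty$. Second, even setting the bias aside, the linearization $1-e^{-x}\sim x$ that turns your ratio into $1-(2^{l-k-i}-1)/(2^{l-i+1}-1)$ requires $v_2 2^{l-i}\rightarrow 0$, which fails for generations $i$ below roughly $(1-\alpha)l$ when $\alpha<1$. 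For such $i$ the first type-2 in the sub-tree typically occurs near generation $i+\alpha l$, so the conditional bound you need to hold \emph{uniformly in $i$} is in fact badly violated for small $i$; rescuing the argument would require showing that $\sigma(M_1)$ rarely lands in those early generations, which is essentially the content of Lemma \ref{bigProp} and defeats the purpose of this elementary lemma. Finally, the statement is non-asymptotic --- the paper points out immediately after the proof that the inequality holds for every $N$ --- and an argument whose error terms are only absorbed asymptotically into the gap between $1/2^{k+1}$ and $1/2^k$ cannot deliver the strict inequality for all $N$.

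The paper conditions the other way and avoids every estimate involving $v_2$: let $Y$ be the number of generations between the cancer-causing type-1 and the first type-2. Given $Y=y$, exchangeability of the cells' mutation mechanisms makes the cancer-causing type-1 uniformly distributed over the cells of generations $1,\dots,l-y$, and the top $k$ of those generations always contain more than a fraction $1-1/2^k$ of the cells, for every $N$. You may want to redo the proof along those lines.
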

\begin{proof}
Let $Y$ be the number of generations between the cancer causing type-1 mutation and the first type-2 mutation. Then $Y \in \{1,2, \dots, l\}$. Because there are only $l$ generations, if the second mutation occurs $l-k$ generations or more after the first then it must be in the last $k$ generations. So $P(\rho(M_1) \geq (l-k)/l|Y \in \{l-k, l-k+1, \dots, l\}) = 1$. If we condition on the event that $Y=k$, then the probability that the cancer causing type-1 mutation occurs on any cell in generations $1, 2, \dots, l-Y$ is equally likely. This is because the descendants of the cells are independent and identically distributed. The last $k$ of the $l-Y$ generations always make up at least a fraction of $1-1/2^k$ cells, so we have $P(\rho(M_1) \geq (l-k)/l|Y \in \{1, 2, \dots, l-k-1\}) > 1-1/2^k$ where we get a strict inequality because we do not count the stem cell. The result follows.
\end{proof}

It is important to notice in the above lemma that we do not need $N \rightarrow \infty$. We can see from the above lemma that $P(\rho(M_1) \geq (l-k)/l) > 1-1/2^k$ holds for any $N$ so it remains valid as $N \rightarrow \infty$.

\begin{Cor} \label{ddsigma2}
As $N$ goes to infinity, $\rho(M_1)$ will converge to 1 in probability.
\end{Cor}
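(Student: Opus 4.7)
The plan is to deduce the corollary directly from Lemma \ref{Kgen} by a standard $\epsilon$-$\delta$ argument, exploiting the fact that $l = \log N \to \infty$.

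First I would fix $\epsilon > 0$ and $\delta > 0$, and note that since $\rho(M_1) \leq 1$ always holds, to establish convergence in probability to $1$ it suffices to show that $P(\rho(M_1) < 1-\epsilon) < \delta$ for all sufficiently large $N$. Next, choose a positive integer $k$ large enough that $1/2^k < \delta$; crucially, $k$ depends only on $\delta$, not on $N$. Since $l = \log N \to \infty$ as $N \to \infty$, for all $N$ large enough we have both $k < l$ (so Lemma \ref{Kgen} applies) and $(l-k)/l \geq 1 - \epsilon$.

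For such $N$, the event $\{\rho(M_1) \geq (l-k)/l\}$ is contained in the event $\{\rho(M_1) \geq 1-\epsilon\}$, so Lemma \ref{Kgen} gives
\begin{equation}
P(\rho(M_1) \geq 1-\epsilon) \geq P(\rho(M_1) \geq (l-k)/l) > 1 - 1/2^k > 1 - \delta,
\end{equation}
and hence $P(\rho(M_1) < 1-\epsilon) < \delta$. Since $\delta > 0$ was arbitrary, $\limsup_{N \to \infty} P(\rho(M_1) < 1-\epsilon) = 0$, which is exactly convergence in probability to $1$.

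There is essentially no obstacle here; the content was already contained in Lemma \ref{Kgen}, and the corollary merely repackages it using the fact that the number of generations $l$ diverges with $N$. The only point one has to be slightly careful about is the order of quantifiers, namely that $k$ is chosen first from $\delta$ and only then is $N$ taken large enough to guarantee both $k < l$ and $(l-k)/l \geq 1-\epsilon$.
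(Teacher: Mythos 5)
Your argument is correct and is exactly the route the paper intends: the corollary is stated as an immediate consequence of Lemma \ref{Kgen}, with the paper merely remarking that the bound $P(\rho(M_1) \geq (l-k)/l) > 1-1/2^k$ holds for every $N$ and hence survives the limit. Your explicit handling of the quantifier order ($k$ from $\delta$ first, then $N$ large enough that $k < l$ and $(l-k)/l \geq 1-\epsilon$) fills in the routine details the paper omits.
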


\begin{Lemma} \label{longlem}
Let $(\beta_1, \beta_2] \subset (0,1]$ and let $C$ and $C'$ be a positive constants. Then
$$\sum_{i \in \N \cap (l\beta_1, l\beta_2]} v_1 2^{i-1}(1-e^{-C v_2(2^{l-i+1}-C')}) \sim C (\beta_2-\beta_1\vee(1-\alpha))^+ v_1 v_2 N \log N.$$
\end{Lemma}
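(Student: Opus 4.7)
The key observation is that the factor $1 - e^{-Cv_2(2^{l-i+1}-C')}$ transitions sharply at the generation $i^*$ where $v_2 \cdot 2^{l-i+1}$ crosses $1$. From assumption \eqref{eq} one has $v_2 = N^{-\alpha + o(1)}$, so $i^* = (1-\alpha)l + o(l)$, and by construction $v_1 \cdot 2^{i^*} = \Theta(v_1 v_2 N)$. For $i \leq i^*$ the factor is bounded by $1$, while for $i > i^*$ the exponent $x := Cv_2(2^{l-i+1}-C')$ is bounded and one may apply the linearization $1 - e^{-x} = x + O(x^2)$.

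I would split the sum at $i^* \vee l\beta_1$. The contribution from $i \in (l\beta_1, i^* \wedge l\beta_2]$ is bounded by $\sum v_1 2^{i-1} = O(v_1 \cdot 2^{i^* \wedge l\beta_2}) = O(v_1 v_2 N)$, which is smaller than the target $v_1 v_2 N \log N$ by a factor of $\log N$ and therefore negligible. For $i \in (i^* \vee l\beta_1, l\beta_2]$, each summand decomposes as
\[
v_1 2^{i-1}\bigl(1 - e^{-x}\bigr) = C v_1 v_2 N - C C' v_1 v_2 \cdot 2^{i-1} + O\bigl(v_1 v_2^2 \cdot 2^{2l-i+1}\bigr).
\]
Summing the leading $C v_1 v_2 N$ over the approximately $l(\beta_2 - \beta_1 \vee (1-\alpha))^+$ generations in this range yields the claimed main term, and both error pieces form geometric sums with ratios bounded away from $1$, totaling $O(v_1 v_2 N)$ and hence lower order.

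The main technical care is needed in the transition window where $v_2 \cdot 2^{l-i+1} \asymp 1$: neither approximation is exact, and since \eqref{eq} pins $v_2 N^\alpha$ down only up to arbitrary subpolynomial factors, $i^*$ itself is only determined to within an additive $o(l)$ error. However, every summand in this window is $O(v_1 v_2 N)$ and the window has width $o(l)$, so its total contribution is $o(v_1 v_2 N \log N)$ and disappears in the stated asymptotic. The finitely many generations near $i = l$ where $2^{l-i+1} - C' \leq 0$ contribute at most $O(v_1 v_2 N)$ in absolute value and are absorbed in the same way, completing the proof of the $\sim$ statement whenever the leading coefficient is positive (and yielding $o(v_1 v_2 N \log N)$ otherwise).
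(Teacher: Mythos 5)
Your proof is correct and follows essentially the same strategy as the paper: split the sum at the transition generation near $(1-\alpha)l$ where $v_2 2^{l-i+1}$ is of order $1$, bound the lower portion by its geometric sum $O(v_1 v_2 N)$, and linearize $1-e^{-x}$ with a quadratic error on the upper portion. The only difference is presentational — the paper brackets the transition with an $\epsilon$-window of generations and sandwiches with $\liminf$/$\limsup$ before letting $\epsilon \to 0$, whereas you split at the exact crossing point $i^*$ and absorb all errors into $O(v_1 v_2 N)$ terms, which is a slightly cleaner bookkeeping of the same argument.
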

\begin{proof}
We will first define some notation for this proof for the sake of readability. Let $I \subset \R$. We define
$$I^* := I \cap (l\beta_1, l\beta_2] \cap \N.$$

First we can do the case when $\alpha \geq 1$. Let $0 < \epsilon < 1$ and break the sum into two parts,
$$\frac{\sum_{i \in (l\beta_1,l\beta_2]^*} v_1 2^{i-1}(1-e^{-C v_2(2^{l-i+1}-C')})}{v_1 v_2 2^l l} = \frac{(\sum_{i\in[1,l\epsilon]^*} + \sum_{i\in(l\epsilon,l]^*}) 2^{i-1}(1-e^{-C v_2(2^{l-i+1}-C')})}{v_2 2^l l}.$$
If we use the upper bound $1-e^{-C v_2(2^{l-i+1}-C')} \leq C v_2(2^{l-i+1}-C') \leq C v_2 2^{l-i+1}$ then
$$0 \leq \frac{\sum_{i\in[1,l\epsilon]^*} 2^{i-1}(1-e^{-C v_2(2^{l-i+1}-C')})}{v_2 2^l l} \leq C (\beta_2 \wedge \epsilon - \beta_1)^+.$$

As for the second sum, the same upper bound yields
$$\frac{\sum_{i \in (l\epsilon,l]^*} 2^{i-1}(1-e^{-C v_2 (2^{l-i+1}-C')})}{v_2 2^l l} \leq C(\beta_2-\beta_1 \vee \epsilon)^+.$$
From the second order Taylor expansion we get a lower bound of
$$1-e^{-C(2^{l-i+1}-C')} \geq C v_2 (2^{l-i+1}-C') - \frac{1}{2}C^2 v_2^2 (2^{l-i+1}-C')^2.$$
We can show that this sum will go up to $1-\epsilon$ by breaking the sum into 5 parts,
$$2^{i-1}(1-e^{-C v_2 (2^{l-i+1}-C')}) \geq 2^l C v_2 - 2^i C C' v_2 - 2^{2l-i}C^2 v_2^2+2^{l}C^2 C' v_2^2-2^{i-2} C^2 (C')^2 v_2^2.$$

We get the following computations for each of the five individual sums:
\nonumber
\begin{align}
\sum_{i \in (l\epsilon,l]^*} & 2^l C v_2/v_2 2^l l \rightarrow C (\beta_2 - \beta_1 \vee \epsilon)^+. \\
\sum_{i \in (l\epsilon,l]^*} & 2^i C C' v_2/v_2 2^l l \leq C C' 2^{l+1}/(2^l l) \rightarrow 0. \\
\sum_{i \in (l\epsilon,l]^*} & 2^{i-2} C^2 (C')^2 v_2^2/v_2 2^l l \leq 2 C^2 C'^2 v_2/l \rightarrow 0. \\
\sum_{i \in (l\epsilon,l]^*} & 2^l C^2 C' v_2^2/v_2 2^l l \leq C^2 C' v_2 \rightarrow 0. \\
\sum_{i \in (l\epsilon,l]^*} & 2^{2l-i} C^2 v_2^2/v_2 2^l l = C^2 v_2 2^l (\sum_{i=\lceil l\epsilon \rceil}^{l} 2^{-i})/l \leq C^2 v_2 2^{l(1-\epsilon) }\rightarrow 0
\end{align}
so long as $v_2 \ll 1/2^{l(1-\epsilon)} = N^{-1+\epsilon}$ which will hold since this is the case $\alpha \geq 1$.

So we have
$$C (\beta_2-\beta_1 \vee \epsilon)^+ \leq \liminf \frac{\sum_{i \in (l\beta_1,l\beta_2]^*} v_1 2^{i-1}(1-e^{-C v_2(2^{l-i+1}-C')})}{v_1 v_2 2^l l}$$
and because $(\beta_2-\beta_1 \vee \epsilon)^+ + (\beta_2 \wedge \epsilon - \beta_1)^+ = \beta_2 - \beta_1$ we also have
$$\limsup \frac{\sum_{i \in (l\beta_1,l\beta_2]^*} v_1 2^{i-1}(1-e^{-C v_2(2^{l-i+1}-C')})}{v_1 v_2 2^l l} \leq C (\beta_2 - \beta_1).$$
Since $\epsilon$ may be made arbitrarily small we have finished the case for $\alpha \geq 1$.

Now let $0 < \alpha < 1$ and let $\epsilon > 0$ be small enough so that $0 < 1-\alpha-\epsilon < 1-\alpha + \epsilon < 1$. We now break the sum into three pieces,
$$\frac{(\sum_{i \in [1,l(1-\alpha-\epsilon))^*} + \sum_{i \in [l(1-\alpha-\epsilon),l(1-\alpha+\epsilon)]^*} + \sum_{i \in (l(1-\alpha+\epsilon),l]^*}) 2^{i-1}(1-e^{-C v_2(2^{l-i+1}-C')})}{v_2 2^l l}.$$
We can consider each of these three sums individually.

As for the middle sum, we only need the bound
$$0 \leq \frac{\sum_{i \in [l(1-\alpha-\epsilon),l(1-\alpha+\epsilon)]^*} 2^{i-1}(1-e^{-C v_2(2^{l-i+1}-C')})}{v_2 2^l l} \leq 2 C \epsilon$$
which follows by the upper bound $1-e^{-C v_2 (2^{l-i+1}-C')} \leq C v_2 2^{l-i+1}$.

One can apply similar computations as in the case when $\alpha = 1$ to obtain the following:
$$\frac{\sum_{i \in (l(1-\alpha+\epsilon),l]^*} 2^{i-1}(1-e^{-C v_2(2^{l-i+1}-C')})}{v_2 2^l l} \rightarrow C (\beta_2-\beta_1 \vee (1-\alpha+\epsilon))^+.$$

For the first sum, note that $1-e^{-C v_2(2^{l-i+1}-C')} \leq 1$. This gives the bound
\nonumber
\begin{align}
0 & \leq \sum_{i \in [1,l(1-\alpha-\epsilon))^*} \frac{2^{i-1}(1-e^{-C v_2 (2^{l-i+1}-C')})}{v_2 2^l l} \\
 & \leq \sum_{i \in [1,l(1-\alpha-\epsilon))^*} \frac{2^{i-1}}{v_2 2^l l} \\
 & \leq \frac{2^{l(1-\alpha-\epsilon)}}{v_2 2^l l} \rightarrow 0.
\end{align}
The convergence is a result of the definition of $\alpha$, namely that $v_2 \gg N^{-\alpha-\epsilon}\log^{-1}N$.

Combining the three sums yields
$$C (\beta_2-\beta_1 \vee (1-\alpha+\epsilon))^+ \leq \liminf \frac{\sum_{i \in (l\beta_1,l\beta_2]^*} v_1 2^{i-1}(1-e^{-C v_2(2^{l-i+1}-C')})}{l v_1 v_2 2^l}$$
and
$$\limsup \frac{\sum_{i \in (l\beta_1,l\beta_2]^*} v_1 2^{i-1}(1-e^{-C v_2(2^{l-i+1}-C')})}{l v_1 v_2 2^l} \leq C (\beta_2-\beta_1 \vee (1-\alpha+\epsilon))^+ + 2 C \epsilon.$$
Again, $\epsilon$ may be arbitrarily small which gives the result.
\end{proof}

\begin{Cor} \label{longCor}
For any time $t$, the rate at which successful type-1 mutations occur is asymptotic to $(\alpha \wedge 1)v_1 v_2 N \log N$.
\end{Cor}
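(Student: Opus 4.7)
The plan is to recognize that this corollary is essentially a direct application of Lemma \ref{longlem} to the full sum over all $l$ generations, together with a small case split on $\alpha$.

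First, I would justify that the instantaneous rate at which successful type-1 mutations occur in model $M_1$ equals the heuristic sum from the introduction,
\[
\sum_{i=1}^{l} v_1\, 2^{i-1}\bigl(1 - e^{-v_2(2^{l-i+1}-2)}\bigr),
\]
and observe that this rate does not depend on the time $t$. Indeed, generation $i$ always contains exactly $2^{i-1}$ cells, each receiving type-1 marks according to an independent Poisson process of rate $v_1$. In model $H_2$ (and hence in $M_1$), the counter mechanism assigns to each such type-1 mark its own independent sequence of type-2 Poisson processes acting on the $2^{l-i+1}-2$ descendants of the mutated cell, each of which lives one time step at rate $v_2$. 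Thus each mark is independently successful with probability $1 - e^{-v_2(2^{l-i+1}-2)}$. Multiplying by $v_1\,2^{i-1}$ and summing over $i$ yields the displayed formula. Since the underlying Poisson rates are time-homogeneous and the generational occupancies are constant, the total rate is the same at every time $t$.

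Next, I would invoke Lemma \ref{longlem} with the choice $\beta_1 = 0$, $\beta_2 = 1$, $C = 1$, $C' = 2$, which gives
\[
\sum_{i=1}^{l} v_1\, 2^{i-1}\bigl(1 - e^{-v_2(2^{l-i+1}-2)}\bigr) \;\sim\; \bigl(1 - 0 \vee (1-\alpha)\bigr)^+ v_1 v_2 N \log N.
\]
A short case analysis on $\alpha$ finishes the computation: if $\alpha \geq 1$ then $0 \vee (1-\alpha) = 0$ and the prefactor equals $1$, while if $0 < \alpha < 1$ then $0 \vee (1-\alpha) = 1-\alpha$ and the prefactor equals $\alpha$. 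In either case the prefactor is $\alpha \wedge 1$, yielding the claimed asymptotic $(\alpha \wedge 1) v_1 v_2 N \log N$.

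There is no real obstacle here, as the analytic heavy lifting is already packaged inside Lemma \ref{longlem}. The only point requiring some care is the combinatorial bookkeeping behind the heuristic sum, in particular the fact that the counter mechanism in $H_2$ attaches an independent batch of type-2 processes to each type-1 mark on a given cell, so that the expected rate of successful marks genuinely decomposes as ``arrival rate $\times$ success probability'' summed over generations without any correction for interactions between multiple type-1 marks on the same cell.
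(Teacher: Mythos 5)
Your proposal is correct and follows essentially the same route as the paper: reduce to the heuristic sum over generations and apply Lemma \ref{longlem} with $\beta_1=0$, $\beta_2=1$. The one imprecision is your claim that the rate \emph{equals} $\sum_{i=1}^{l} v_1 2^{i-1}(1-e^{-v_2(2^{l-i+1}-2)})$: the cell that receives the type-1 mark survives for a residual fraction of a time unit before its first split and can itself acquire the type-2 mutation during that interval, so the success probability is only bounded between $1-e^{-v_2(2^{l-i+1}-2)}$ and $1-e^{-v_2(2^{l-i+1}-1)}$; the paper sandwiches $R(t)$ between the two corresponding sums and invokes Lemma \ref{longlem} with $C'=2$ and $C'=1$, whose conclusion is independent of $C'$, so your asymptotic conclusion is unaffected.
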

\begin{proof}
For $1 \leq i \leq l$ there are $2^{i-1}$ cells in generation $i$. Each of these cells is getting type-1 mutations at rate $v_1$. The cells in generation $i$ have $2^{l-i+1}-2$ descendants. If the cell splits as soon as it becomes a type-1, the probability that none of its descendants get a type-2 mutation is $e^{-v_2(2^{l-i+1}-2)}$. On the other hand, after a cell gets a type-1 mutation it could live for at most 1 time unit until it splits. If this is the case, then the probability that neither the cell that receives the type-1 mutation nor any of its descendants get a type-2 mutation is $e^{-v_2(2^{l-i+1}-1)}$. If we let $R(t)$ be the rate at which the successful type-1 mutations occur at time $t$, then for any time $t$ we have
\begin{align*}
1 & = \lim \frac{\sum_{i=1}^{l} v_1 2^{i-1}(1-e^{-v_2(2^{l-i+1}-2)})}{(\alpha \wedge 1) v_1 v_2 N \log N} \leq \liminf \frac{R(t)}{(\alpha \wedge 1) v_1 v_2 N \log N} \\
& \leq \limsup \frac{R(t)}{(\alpha \wedge 1) v_1 v_2 N \log N} \leq \lim \frac{\sum_{i=1}^{l} v_1 2^{i-1}(1-e^{-v_2(2^{l-i+1}-1)})}{(\alpha \wedge 1) v_1 v_2 N \log N} = 1,
\end{align*}
where the limits are results of Lemma \ref{longlem}.
\end{proof}

\begin{Lemma} \label{bigProp}
If $v_1 v_2 \ll 1/N (\log N)^2$ then the distribution of $\sigma(M_1)$ converges to the uniform distribution on $((1-\alpha)^+,1]$.
\end{Lemma}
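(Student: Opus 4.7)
The plan is to reduce $\sigma(M_1)$ to the generation of the first successful type-1 mutation and then compute its distribution by a Poisson race across generations. Writing $\lambda_i = v_1 2^{i-1}(1-e^{-v_2(2^{l-i+1}-2)})$ for the rate at which successful type-1 mutations appear in generation $i$, the goal reduces to showing that for each $0 \leq \beta_1 < \beta_2 \leq 1$,
\[
P(\sigma(M_1) \in (\beta_1,\beta_2]) \to \frac{(\beta_2 - \beta_1 \vee (1-\alpha))^+}{\alpha \wedge 1},
\]
which is precisely the mass the uniform distribution on $((1-\alpha)^+,1]$ assigns to $(\beta_1,\beta_2]$.

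First I would show that the cancer-causing type-1 mutation coincides with the first successful type-1 mutation with probability tending to $1$. By Corollary \ref{longCor} the total rate of successful type-1 mutations is $\sim (\alpha \wedge 1) v_1 v_2 N \log N$, so the typical gap between consecutive successful mutations is of order $1/(v_1 v_2 N \log N)$, which is $\gg \log N$ under the hypothesis $v_1 v_2 \ll 1/N(\log N)^2$; explicitly, the probability that two successful type-1 mutations occur within $\log N$ time units of each other is at most $1 - e^{-(\alpha \wedge 1) v_1 v_2 N (\log N)^2} = o(1)$. On the other hand, the latency between a successful type-1 mutation and its first type-2 descendant is at most $l = \log N$ time units, since a type-1 mutation and its lineage are swept from the population within $l$ steps. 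Hence, with high probability the type-2 descendant of the first successful type-1 appears before any second successful type-1 mutation, so the first successful type-1 is the cancer-causing one.

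Second, I would compute the generation of the first successful type-1 mutation. In $M_1$ the type-1 Poisson processes on distinct daughter cells and the type-2 Poisson processes $P_n^i$ used to determine success are all independent, so the successful type-1 mutations in generation $i$ form (approximately) a Poisson process of rate $\lambda_i$, with the processes for different generations independent. The classical race for independent Poisson processes then gives that the first successful mutation occurs in generation $i$ with probability $\lambda_i / \sum_{j=1}^l \lambda_j$, and summing over $i \in (l\beta_1,l\beta_2] \cap \N$, Lemma \ref{longlem} identifies the numerator as $(\beta_2 - \beta_1 \vee (1-\alpha))^+ v_1 v_2 N \log N (1+o(1))$ while Corollary \ref{longCor} identifies the denominator as $(\alpha \wedge 1) v_1 v_2 N \log N (1+o(1))$, giving the claimed limit. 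The main obstacle is making this Poisson race rigorous: the rate of successful type-1 mutations is not exactly constant in time, both because ``success'' is measured $O(\log N)$ time units into the future and because the pool of type-$0$ cells shrinks as mutations accumulate; but both corrections are $o(1)$ on the time scale $\tau(M_1)$ relevant to the first successful mutation, essentially because the first reduction shows only one successful mutation is in flight at a time with high probability.
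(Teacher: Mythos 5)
Your proposal is correct and follows essentially the same route as the paper: first show the cancer-causing type-1 mutation is the first successful one with probability tending to 1 (because the gap between consecutive successful type-1 mutations, of order $1/(v_1v_2N\log N)$, dwarfs the $\log N$ latency before a type-2 descendant), then identify the generation of the first successful type-1 mutation via the Poisson race with rates summed by Lemma \ref{longlem} and Corollary \ref{longCor}. The paper handles the time-inhomogeneity you flag by sandwiching the generation-$i$ rate between $v_12^{i-1}(1-e^{-v_2(2^{l-i+1}-2)})$ and $v_12^{i-1}(1-e^{-v_2(2^{l-i+1}-1)})$, both of which give the same limit (and in $M_1$, built on $H_2$, the counter mechanism means the pool of cells receiving type-1 mutations never shrinks, so that correction is not needed).
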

\begin{proof}
Let $X_1$ be the time at which the cancer causing mutation occurs and let $Y_1$ be the time at which the first successful type-1 mutation occurs. By Corollary \ref{longCor} we have that the random variable $(\alpha \wedge 1)v_1v_2N(\log N)Y_1$ is converging in distribution to an exponentially distributed random variable with parameter 1. Let $Y_2$ be the time it takes to get the second successful type-1 mutation after the first and let $X_2 = \tau(M_2) - Y_1$. As a result of Corollary \ref{longCor} again, $(\alpha \wedge 1)v_1v_2N(\log N)Y_2$ converges in distribution to an exponentially distributed random variable with parameter 1. Then because a type-2 mutation must occur within $\log N$ time after a successful type-1 mutation on a daughter cell we have
$$P(Y_2 < X_2) \leq P(Y_2 < \log N) = P((\alpha \wedge 1)v_1v_2N(\log N)Y_2 < (\alpha \wedge 1)v_1v_2N(\log N)^2) \rightarrow 0.$$
Moreover, $P(Y_2 \geq X_2) \leq P(Y_1 = X_1)$ so $P(Y_1 = X_1) \rightarrow 1$. Therefore, it is enough to find the distribution of the first successful type-1 mutation.

Each generation $i$ with $1 \leq i \leq l$ is getting successful type-1 mutations independently at a rate bounded between $v_1 2^{i-1}(1-e^{-v_2(2^{l-i+1}-2)})$ and $v_1 2^{i-1}(1-e^{-v_2(2^{l-i+1}-1)})$ for any time $t$. Therefore, for a fixed $N$ and $i$, the probability that the first successful type-1 mutation occurs on generation $i$ is between
$$\frac{v_12^{i-1}(1-e^{-v_2(2^{l-i+1}-2)})}{\sum_{j=1}^l v_1 2^{j-1}(1-e^{-v_2(2^{l-j+1}-1)})}$$
and
$$\frac{v_1 2^{i-1}(1-e^{-v_2(2^{l-i+1}-1)})}{\sum_{j=1}^l v_1 2^{j-1}(1-e^{-v_2(2^{l-j+1}-2)})}.$$
Let $\beta \in [0,1]$. Using the notation and result from Lemma \ref{longlem},
$$\limsup P(\sigma(M_1) \leq \beta) \leq \limsup \frac{\sum_{i \in (0,l\beta]^*} v_12^{i-1}(1-e^{-v_2(2^{l-i+1}-1)})}{\sum_{i \in (0,l]^*} v_1 2^{j-1}(1-e^{-v_2(2^{l-j+1}-2)})} = \frac{(\beta-(1-\alpha)^+)^+}{\alpha \wedge 1}$$
and
$$\liminf P(\sigma(M_1) \leq \beta) \geq \liminf \frac{\sum_{i \in (0,l\beta]^*} v_12^{i-1}(1-e^{-v_2(2^{l-i+1}-2)})}{\sum_{i \in (0,l]^*} v_1 2^{j-1}(1-e^{-v_2(2^{l-j+1}-1)})} = \frac{(\beta-(1-\alpha)^+)^+}{\alpha \wedge 1}.$$
\end{proof}

\begin{Lemma} \label{ddExp}
If $v_1 v_2 \ll 1/N (\log N)^2$ then $(\alpha \wedge 1) v_1 v_2 N (\log N) \tau(M_1) \rightarrow_d X$ where $X$ is an exponential random variable with parameter $1$.
\end{Lemma}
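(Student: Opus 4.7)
The plan is to reduce $\tau(M_1)$ to the time $Y_1$ of the first successful type-1 mutation, since Corollary \ref{longCor} gives the appropriate scaling directly for $Y_1$. In model $M_1$ the Poisson processes marking type-1 mutations on the daughter cells are time-homogeneous, and whether a given type-1 mutation is successful is determined by an independent collection of future type-2 Poisson marks in the cell's descendants whose joint law does not depend on the current time. Consequently, up to the moment of the first successful type-1 mutation, successful type-1 mutations form a homogeneous Poisson process, and by Corollary \ref{longCor} its rate is asymptotic to $(\alpha \wedge 1) v_1 v_2 N \log N$. In particular $Y_1$ is exponential with this rate, so
\[
(\alpha \wedge 1) v_1 v_2 N (\log N)\, Y_1 \;\longrightarrow_d\; X.
\]

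Next I would bound $\tau(M_1) - Y_1$. Let $X_1$ denote the time of the cancer-causing type-1 mutation, so that $Y_1 \leq X_1 \leq \tau(M_1)$. The argument in the proof of Lemma \ref{bigProp} already establishes $P(Y_1 = X_1) \to 1$ under the hypothesis $v_1 v_2 \ll 1/N(\log N)^2$: if $Y_2$ is the gap between the first and second successful type-1 mutations, then a second successful type-1 can preempt the first only if $Y_2 < \log N$, and since $(\alpha \wedge 1) v_1 v_2 N (\log N)\, Y_2$ converges to an exponential, the inequality $(\alpha \wedge 1) v_1 v_2 N (\log N)^2 \to 0$ forces $P(Y_2 < \log N) \to 0$. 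On the event $\{Y_1 = X_1\}$, the remaining difference $\tau(M_1) - X_1$ is deterministically at most $\log N$, because the type-2 descendant must appear before the affected lineage is swept from the crypt, which happens within $l = \log N$ time units.

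Combining the pieces, with probability tending to $1$ we have
\[
0 \;\leq\; (\alpha \wedge 1) v_1 v_2 N (\log N)\bigl(\tau(M_1) - Y_1\bigr) \;\leq\; (\alpha \wedge 1) v_1 v_2 N (\log N)^2 \;\longrightarrow\; 0
\]
by hypothesis. Together with the exponential limit for $(\alpha \wedge 1) v_1 v_2 N (\log N)\, Y_1$ and Slutsky's theorem, this yields $(\alpha \wedge 1) v_1 v_2 N (\log N)\,\tau(M_1) \to_d X$.

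The main technical point, and the only step that is not essentially bookkeeping, is the interpretation of the rate in Corollary \ref{longCor} as the parameter of an honest exponential waiting time for $Y_1$. This requires the observation that time-homogeneity of the type-1 Poisson processes and independence of the success event (which depends only on type-2 marks in a disjoint collection of descendant cells) together force the first-arrival time of a successful type-1 mutation to be exponential with the summed generation-wise rate; Corollary \ref{longCor} then supplies its asymptotic value. Everything else follows from the Slutsky argument above.
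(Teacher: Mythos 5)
Your proposal is correct and follows essentially the same route as the paper: both decompose $\tau(M_1)$ into the time of the cancer-causing (equivalently, with probability tending to one, the first successful) type-1 mutation plus a remainder bounded by $\log N$ via apoptosis, invoke Corollary \ref{longCor} for the exponential limit of the scaled first part, and kill the remainder using $v_1v_2 N(\log N)^2 \to 0$. Your added remarks on why the first-arrival time of a successful type-1 mutation is (asymptotically) exponential make explicit a point the paper leaves implicit, but the argument is the same.
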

\begin{proof}
Let $X_1$ be the time at which the cancer causing type-1 mutation occurs and let $X_2 = \tau(M_1) - X_1$. From the proof of Lemma \ref{bigProp} we know that the probability that the first successful type-1 mutation is the cancer causing mutation is converging to 1. By Corollary \ref{longCor} we know that the rate of successful type-1 mutations is approaching $(\alpha \wedge 1) v_1 v_2 N \log N$. This gives us that $(\alpha \wedge 1) v_1 v_2 N (\log N) X_1$ is converging in distribution to an exponentially distributed random variable with parameter 1.

Let $\epsilon > 0$. Due to apoptosis $X_2$ is bounded above by $\log N$ so
$$P((\alpha \wedge 1) v_1 v_2 N (\log N) X_2 > \epsilon) \rightarrow 0.$$
In other words, $(\alpha \wedge 1) v_1 v_2 N (\log N) X_2 \rightarrow_p 0$. Then
$$(\alpha \wedge 1) v_1 v_2 N (\log N) \tau(M_1) = (\alpha \wedge 1) v_1 v_2 N (\log N) (X_1 + X_2) \rightarrow_d X.$$
\end{proof}

Combining the results of Corollary \ref{ddsigma2} and Propositions \ref{bigProp} and \ref{ddExp} we have part 1 of Proposition \ref{ddRegime}. For the next two proofs we note that Corollary \ref{ddsigma2} already gives us that $\rho(M_1)$ converges to 1 in probability.

\begin{proof}[Proof of part 2 of Proposition \ref{ddRegime}.]
Consider generation $i$ for some $i \in \{1, \dots, l\}$ at time 0. The total number of descendants of the cells in generation $i$ is $2^{l-i+1}-2$. However, if $t < l-i$ then the total number of descendants by time $t$ is between $2^{t-1}$ and $2^{t+1}$. At each integral time unit there is a new collection of cells in generation $i$. We can consider a sequence of collections of cells where the first element in the sequence is the collection of cells in generation $i$ during time $[0,1)$, the second element is the collection of cells in generation $i$ during time [1,2), and so on. Because the Poisson processes marking the type-1 mutations in model $H_2$ are independent of the type-1 mutations that have already occurred we can consider the sequence of cells in these generations and their descendants that occur over time to be independent. Also, the random variables denoting the times at which type-2 mutations occur as a result of type-1 mutations on the cells in generation $i$ would be identically distributed if we were to start each new collection of cells in generation $i$ at time 0. If $t < l-i$ then by time $t$ the number of cells which will have descended from the $j^{th}$ element in the sequence will be between $2^{t-1-j}$ and $2^{t+1-j}$ for $j \leq \lfloor t \rfloor$. If we sum over all of the terms in the sequence which have appeared by time $t$, the total number of cells which have descended from a cell in generation $i$ (including those which have already undergone apoptosis) will be between
$$\sum_{j=0}^{\lfloor t \rfloor} 2^{t-1-j} \geq 2^t-1$$
and
$$\sum_{j=0}^{\lfloor t \rfloor} 2^{t+1-j} \leq 2^{t+2}-1.$$
If $t \geq l-i$ then by time $t$ the total number of cells which will have descended from a cell in generation $i$ will be between
$$\sum_{j=0}^{l-i}2^{l-i-j-1}+(t-l+i)(2^{l-i+1}-2) = 2^{l-i}-1+(t-l+i)(2^{l-i+1}-2)$$
and
$$\sum_{j=0}^{l-i}2^{l-i-j+1}+(t-l+i)(2^{l-i+1}-2) = 2^{l-i+2}-1+(t-l+i)(2^{l-i+1}-2).$$
Recall that there are always $2^{i-1}$ cells in generation $i$ which are acquiring type-1 mutations at rate $v_1$. If we multiply the rate of type-1 mutations on generation $i$ by the probability that such a mutation is successful, we find that the type-2 mutations that occur as a result of successful type-1 mutations that occur on generation $i$ occur according to a Poisson process that has intensity measure between
$$2^{i-1}v_1(1-e^{-v_2(2^t-1)})\mbox{ and }2^{i-1} v_1 (1-e^{-v_2 (2^{t+2}-1)})$$
if $t < l-i$ and
$$2^{i-1}v_1 (1-e^{-v_2(2^{l-i}-1+(t-l+i)(2^{l-i+1}-2))}) \mbox{ and }2^{i-1} v_1 (1-e^{-v_2 (2^{l-i+2}-1+(t-l+i)(2^{l-i+1}-2))})$$
if $t \geq l-i$.

First we concentrate on the upper bound. For $N$ large enough we will have $t < \sqrt{v_1 v_2 N} \log N$ for any real number $t$ by the hypothesis $1/N (\log N)^2 \ll v_1 v_2$. Let $t/\sqrt{v_1 v_2 N} < l$. Then
$$P(\tau(M_2) \leq \frac{t}{\sqrt{v_1 v_2 N}}) = 1-e^{-f(N,t)}$$
where by summing over the generations and using the fact that $1-e^{-x} \leq x$ we obtain
\begin{align*}
f(N,t) & \leq \sum_{0 \leq i < l-\frac{t}{\sqrt{v_1 v_2 N}}}2^{i-1} v_1 (1-e^{-v_2 (2^{t/\sqrt{v_1 v_2 N}+2}-1)}) \\
& \hspace{20pt} + \sum_{l-\frac{t}{v_1 v_2 N} \leq i \leq l}2^{i-1} v_1 (1-e^{-v_2 (2^{l-i+2}-1+(t/\sqrt{v_1 v_2 N}-l+i)(2^{l-i+1}-2))}) \\
& \leq \sum_{0 \leq i < l-\frac{t}{\sqrt{v_1 v_2 N}}}2^{i-1} (2^{t/\sqrt{v_1 v_2 N}+2}-1)v_1 v_2 \\
& \hspace{20pt} + \sum_{l-\frac{t}{v_1 v_2 N} \leq i \leq l}2^{i-1}\left(2^{l-i+2}-1+\left(\frac{t}{\sqrt{v_1 v_2 N}}-l+i\right)(2^{l-i+1}-2)\right)v_1 v_2.
\end{align*}
As for the first sum,
\begin{align*}
\sum_{0 \leq i < l-\frac{t}{\sqrt{v_1 v_2 N}}}2^{i-1}(2^{t/\sqrt{v_1 v_2 N}+2}-1)v_1 v_2 & \leq \frac{1}{2}(2^{t/\sqrt{v_1 v_2 N}+2}-1) (2^{l-t/\sqrt{v_1 v_2 N}+1}-1)v_1 v_2 \\
& \leq 2^{l+2} v_1 v_2 \rightarrow 0.
\end{align*}
As for the second sum, we first compute
$$\sum_{l-\frac{t}{\sqrt{v_1 v_2 N}} \leq i \leq l}2^{i-1}(2^{l-i+2}-1)v_1 v_2 \leq 2^{l+2} v_1 v_2 \frac{t}{\sqrt{v_1 v_2 N}} \rightarrow 0.$$
Lastly,
\begin{align*}
\sum_{l-\frac{t}{\sqrt{v_1 v_2 N}} \leq i \leq l}2^{i-1}\left(\frac{t}{\sqrt{v_1 v_2 N}}-l+i\right)(2^{l-i+1}-2)v_1 v_2 & \leq 2^l v_1 v_2 \sum_{l-\frac{t}{\sqrt{v_1 v_2 N}} \leq i \leq l} \left(\frac{t}{\sqrt{v_1 v_2 N}}-l+i\right) \\
& \leq \frac{2^l v_1 v_2}{2} \left(\frac{t}{\sqrt{v_1 v_2 N}}+1\right)^2  \\
& \rightarrow \frac{t^2}{2}.
\end{align*}
Therefore, $\limsup P(\sqrt{v_1 v_2 N}\tau(M_2) \leq t) \leq 1-e^{-t^2/2}$.

As for the lower bound, we have
\begin{align*}
f(N,t) & \geq \sum_{0 \leq i < l-\frac{t}{\sqrt{v_1 v_2 N}}} 2^{i-1} v_1 (1-e^{-v_2(2^{t/\sqrt{v_1 v_2 N}}-1)}) \\
& \hspace{20pt} +\sum_{l-\frac{t}{\sqrt{v_1 v_2 N}} \leq i \leq l}2^{i-1} v_1 (1-e^{-v_2 (2^{l-i} - 1 + (t/\sqrt{v_1 v_2 N}-l+i)(2^{l-i+1}-2))}) \\
& \geq \sum_{l-\frac{t}{\sqrt{v_1 v_2 N}} \leq i \leq l}2^{i-1} v_1 (1-e^{-v_2 (t/\sqrt{v_1 v_2 N}-l+i)(2^{l-i+1}-2)}).
\end{align*}
Using the bound $1-e^{-x} \geq x-x^2/2$ we have
$$\sum_{l-t/\sqrt{v_1 v_2 N} \leq i \leq l} 2^{i-1} v_1 (1-e^{-v_2 (t/\sqrt{v_1 v_2 N}-l+i)(2^{l-i+1}-2)})$$
will be greater than or equal to the sum over $i \in [l-t/\sqrt{v_1 v_2 N}, l]$ of
$$2^{i-1} v_1 \left(v_2 \left(\frac{t}{\sqrt{v_1 v_2 N}}-l+i\right)(2^{l-i+1}-2)-v_2^2 \left(\frac{t}{\sqrt{v_1 v_2 N}}-l+i\right)^2(2^{l-i+1}-2)^2/2\right).$$
First consider
$$\sum_{l-t/\sqrt{v_1 v_2 N} \leq i \leq l}2^{i-1} v_1 v_2^2 \left(\frac{t}{\sqrt{v_1 v_2 N}}-l+i\right)^2 \frac{(2^{l-i+1}-2)^2}{2}.$$
This sum is bounded between 0 and $\sum_{l-t/\sqrt{v_1 v_2 N} \leq i \leq l} v_2 t^2 2^{l-i}.$ Let $0 < \epsilon < \alpha$. For $N$ large enough we have $t < \sqrt{v_1 v_2 N} l (\alpha-\epsilon)$ which is equivalent to $l(1-\alpha-\epsilon) < l-t/\sqrt{v_1 v_2 N}.$ So for $N$ large enough we have
$$\sum_{l-t/\sqrt{v_1 v_2 N} \leq i \leq l}v_2 t^2 2^{l-i} \leq \sum_{l-l(1-\alpha+\epsilon) \leq i \leq l}v_2 t^2 2^{l-i} \leq l v_2 N^{\alpha-\epsilon} \rightarrow 0.$$

This leaves us to show
$$\liminf \sum_{l-t/\sqrt{v_1 v_2 N} \leq i \leq l}2^{i-1} v_1 v_2 \left(\frac{t}{\sqrt{v_1 v_2 N}}-l+i\right)(2^{l-i+1}-2) \geq \frac{t^2}{2}.$$
Let $j \in \N$ and $t > 0$. By our assumptions, for large enough values of $N$ we will have $j < t/\sqrt{v_1 v_2 N} < \log N$. Notice that if $i \leq l-j$ then $2^{l-i+1}-2 \geq (1-2^{-j})2^{l-i+1}$, so
\begin{align*}
\sum_{l-t/\sqrt{v_1 v_2 N} \leq i \leq l} & 2^{i-1} v_1 v_2 \left(\frac{t}{\sqrt{v_1 v_2 N}}-l+i\right)(2^{l-i+1}-2) \\
& \geq \sum_{l-t/\sqrt{v_1 v_2 N} \leq i \leq l-j}2^{i-1} v_1 v_2 \left(\frac{t}{\sqrt{v_1 v_2 N}}-l+i\right)(1-2^{-j})2^{l-i+1}.
\end{align*}
Because $j$ is fixed we have
$$\sum_{l-j \leq i \leq l}2^{i-1} v_1 v_2 \left(\frac{t}{\sqrt{v_1 v_2 N}}-l+i\right)(1-2^{-j})2^{l-i+1} \rightarrow 0$$
since each of the summands converges to 0. Therefore, we can add this sum without changing the limit. This gets us a lower bound of
\begin{align*}
\liminf \sum_{l-t/\sqrt{v_1 v_2 N} \leq i \leq l}2^l v_1 v_2 \left(\frac{t}{\sqrt{v_1 v_2 N}}-l+i\right)(1-2^{-j}) \geq \frac{t^2}{2}(1-2^{-j}).
\end{align*}
We chose $j$ to be any natural number, so $\liminf P(\sqrt{v_1 v_2 N} \tau(M_2) \leq t) \geq 1-e^{-t^2/2}$.

The above two bounds establish that $P(\sqrt{v_1 v_2 N} \tau(M_2) \leq t) \rightarrow 1-e^{-t^2/2}$ for any $t \geq 0$. This leaves us to show that $\sigma(M_1)$ converges in probability to 1. First note that for any $\epsilon > 0$ we have
$$P(\tau(M_1) \leq \epsilon \log N) = P(\sqrt{N v_1 v_2} \tau(M_1) \leq \sqrt{N v_1 v_2} \epsilon \log N) \rightarrow 1$$
which follows because the distribution of $\sqrt{N v_1 v_2} \tau(M_1)$ is converging to the Rayleigh distribution and $\sqrt{N v_1 v_2} \epsilon \log N$ is converging to 0. Let $\delta > 0$. By Corollary \ref{ddsigma2} we know that $\rho(M_1)$ converges in probability to 1 so that as $N$ goes to infinity, $P(\rho(M_1) > 1-\delta) \rightarrow 1$. If $\sigma(M_1) < 1-2\delta$ and $\rho(M_1) > 1-\delta$ then $\tau(M_1) > \delta \log N$. Because $P(\tau(M_1) > \delta \log N) \rightarrow 0$ we must also have $P(\sigma(M_1) < 1-2\delta) \rightarrow 0$ where $\delta > 0$ was arbitrary. Then $P(1-\sigma(M_1) > 2\delta) \rightarrow 0$ for any $\delta > 0$ so $\sigma(M_1) \rightarrow_p 1$.
\end{proof}

\begin{proof}[Proof of part 3 of Proposition \ref{ddRegime}.]
We shall make use of the following well known fact: If $\{a_n\}_{n=1}^{\infty}$ is a sequence of real numbers such that $a_n \rightarrow a$, then
$$\lim_{n \rightarrow \infty}(1-\frac{a_n}{n})^{n-1} = e^a.$$

Before time 1 the cells never split and there is no apoptosis. If we ignore the splitting and apoptosis and consider how long it takes for a cell to acquire two mutations under the mutation mechanism alone then we have $N-1$ cells acquiring mutations independently. For any individual cell, the time it takes to acquire two mutations will have the same distribution as the sum of two independent exponentially distributed random variables with parameters $v_1$ and $v_2$. If we denote the time until cell $i$ has a type-2 mutation by $T_i$ and assume $v_1 \neq v_2$ then
$$P(T_i \leq t) = 1-\frac{v_2e^{-v_1 t}-v_1e^{-v_2 t}}{v_2-v_1}.$$
There are $N-1$ cells independently getting mutations, so for $t \leq 1$ we have
$$P(\tau(M_1) \leq t) = 1-\left(\frac{v_2e^{-v_1 t}-v_1e^{-v_2 t}}{v_2-v_1}\right)^{N-1},$$
or equivalently,
$$P(\sqrt{v_1 v_2 N}\tau(M_1) \leq t) = 1-\left(\frac{v_2e^{-\sqrt{v_1/v_2 N} t}-v_1e^{-\sqrt{v_2/v_1 N} t}}{v_2-v_1}\right)^{N-1}.$$

By using the third degree Taylor expansion of the exponential function we get the bounds
$$1-\frac{t^2}{2N} - \sqrt{\frac{v_1^3}{v_2 N^3}}\frac{t^3}{6} \leq \frac{v_2e^{-\sqrt{v_1/v_2 N} t}-v_1e^{-\sqrt{v_2/v_1 N} t}}{v_2-v_1} \leq 1-\frac{t^2}{2N} + \sqrt{\frac{v_2^3}{v_1 N^3}}\frac{t^3}{6}.$$

Notice that $N \sqrt{v_1^3/v_2 N^3} = v_1^2/\sqrt{v_1 v_2 N} \rightarrow 0$ and $N \sqrt{v_2^3/v_1 N^3} = v_2^2/\sqrt{v_1 v_2 N} \rightarrow 0$. Then for any fixed $t$ we have
$$\left(1-\frac{t^2}{2N} - \sqrt{\frac{v_1^3}{v_2 N^3}}\frac{t^3}{6}\right)^{N-1} \rightarrow \frac{t^2}{2}$$
and
$$\left(1-\frac{t^2}{2N} + \sqrt{\frac{v_2^3}{v_1 N^3}}\frac{t^3}{6}\right)^{N-1} \rightarrow \frac{t^2}{2}.$$

If $v_1 = v_2$ then the probability that one cell has two mutations by time $t$ is $1-e^{-v_1 t} - v_1 t e^{-v_1 t}$ if we ignore splitting and apoptosis. The probability that one of the $N$ cells has two mutations by time $t$ is $1 - (e^{-v_1 t} - v_1 t e^{-v_1 t})^N$. By applying the same techniques as above we get $P(\sqrt{v_1 v_2 N} \tau(M_1) \leq t) \rightarrow 1-e^{-t^2/2}$ when $v_1 = v_2$.

Combining the two results above we have $P(\sqrt{v_1 v_2 N} \tau(M_1) \leq t) \rightarrow 1-e^{-t^2/2}$ when ignoring splitting and apoptosis. Then $P(\tau(M_1) < 1) = P(\sqrt{v_1 v_2 N}\tau(M_1) < \sqrt{v_1 v_2 N}) \rightarrow 1$. Therefore, the probability that two mutations occur before time 1 is converging to 1 so we may ignore splitting and apoptosis in this case. This gives the desired result for $\tau(M_1).$

By Corollary \ref{ddsigma2} we know that $\rho(M_1)$ converges in probability to 1. Because the two mutations occur before splitting or apoptosis, the probability that the cancer causing type-1 mutation and the first type-2 mutation are on the same cell converges to 1. Therefore, $\sigma(H_1)$ converges to 1 in probability.
\end{proof}

\section{The sd and ss regimes}
In this section we need two different models. The first one is the same as model $H_2$ except that only the stem cell receives type-1 mutations and only the daughter cells receive type-2 mutations. The second is the same as $H_2$ except that only the stem cell receives mutations. These will be referred to as models $M_2$ and $M_3$ respectively.

\begin{Prop} \label{ssRegime}
Let $X$ be a random variable which has an exponential distribution with parameter 1.
\begin{enumerate}
\item If $u_1 \ll 1/\log N$ and $u_1 \ll N v_2$ then $u_1 \tau(M_2) \rightarrow_d X$ and $\rho(M_2)$ converges in probability to $\alpha \wedge 1$.
\item If $u_1 \ll u_2$ then $u_1 \tau(M_3) \rightarrow_d X$.
\item Let $A \geq 1$ and $Z$ be an exponentially distributed random variable with parameter $1/A$ which is independent of $X$. If $u_1 \sim A u_2$ then $u_1 \tau(M_3) \rightarrow_d X + Z$.
\end{enumerate}
\end{Prop}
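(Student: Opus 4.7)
Parts~2 and~3 concern model $M_3$, in which only the stem cell ever mutates. Writing $\tau(M_3) = T_1 + T_2$ with $T_1$, $T_2$ independent exponentials of rates $u_1$ and $u_2$, we have $u_1 T_1 \rightarrow_d X$ automatically. In part~2, $u_1/u_2 \rightarrow 0$, so Lemma~\ref{forConLem} applied to $u_2 T_2$ (a rate-1 exponential) gives $u_1 T_2 = (u_1/u_2)(u_2 T_2) \rightarrow_p 0$, and Slutsky yields $u_1\tau(M_3)\rightarrow_d X$. In part~3, $u_1/u_2 \rightarrow A$, so $u_1 T_2$ converges in distribution to $A$ times a rate-1 exponential, which has the distribution of the rate-$1/A$ exponential $Z$; independence of $T_1$ and $T_2$ lifts this to joint convergence of the pair, whence $u_1\tau(M_3)\rightarrow_d X+Z$.

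Part~1 is the substantial case. Decompose $\tau(M_2) = T_1 + S$, where $T_1$ is the time of the stem cell's type-1 mutation (exponential with rate $u_1$, so $u_1 T_1 \rightarrow_d X$) and $S$ is the subsequent waiting time until the first type-2 mutation on some daughter. The plan is to show $u_1 S \rightarrow_p 0$, so that Slutsky gives $u_1\tau(M_2)\rightarrow_d X$, and simultaneously to identify the generation of the resulting type-2 cell.

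At time $T_1+s$ with $\lfloor s\rfloor = k \leq l$ the type-1 mutation has propagated into generations $1,\dots,k$, which together contain $2^k-1$ daughter cells; for $s\geq l$ all $N-1$ daughters are type-1. Since each type-1 daughter acquires type-2 mutations at rate $v_2$, the first type-2 time satisfies
\[
P(S>t) = \exp\Bigl(-\int_0^t v_2\bigl(2^{\min(\lfloor s\rfloor,\,l)}-1\bigr)\,ds\Bigr).
\]
Direct evaluation shows that when $\alpha\geq 1$ the integral reaches order~$1$ only after $s$ has exceeded $l$ by $\Theta(1/(Nv_2))$ units, giving $S = l + O_p(1/(Nv_2))$, while for $\alpha<1$ the integrand's doubly-exponential growth in $s$ forces the transition at $s^\star\sim -\log_2 v_2\sim \alpha l$, giving $S = \alpha l + O_p(1)$. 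In either case the two hypotheses $u_1\ll 1/\log N$ and $u_1\ll Nv_2$ force $u_1 S\rightarrow_p 0$, with each hypothesis handling one of the two regimes. This completes the proof that $u_1\tau(M_2)\rightarrow_d X$.

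For $\rho(M_2)$, conditionally on $S = s$ the type-2 mutation lands on a cell chosen uniformly from the current type-1 daughters, hitting generation $i$ with probability $2^{i-1}/(2^{\min(\lfloor s\rfloor,l)}-1)$. This puts mass $\tfrac12+o(1)$ on the topmost occupied generation and decays geometrically below it, so the generation hit equals $\min(\lfloor S\rfloor,l) - K$ with $K = O_p(1)$. Combining with the concentration of $S$ above, the generation of the first type-2 equals $(\alpha\wedge 1)l + O_p(1)$, and dividing by $l$ yields $\rho(M_2)\rightarrow_p \alpha\wedge 1$. The main obstacle is the $\alpha<1$ regime: one must verify that the two-sided tail estimates for the integrated rate $v_2\int 2^{\lfloor s\rfloor}\,ds$ are tight enough that both the $O_p(1)$ fluctuation in $S$ and the $O_p(1)$ downward slack in the generation choice are genuinely $o(l)$ after the final division by $l$. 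The $\alpha\geq 1$ case is easier because then $s^\star\geq l$ and the type-2 hits a uniform-by-population daughter, which is automatically concentrated at generation $l$.
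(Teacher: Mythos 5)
Your proposal follows essentially the same route as the paper: parts 2 and 3 are the identical two-exponential decomposition, and for part 1 the paper likewise writes $\tau(M_2)$ as the stem-cell waiting time plus the subsequent propagation/waiting time, sandwiches $P(X_2>t)$ between $e^{-2^{t+2}v_2}$ and $e^{-(2^{t-2}-2)v_2}$ (its Lemma \ref{oddProp}), uses $u_1\ll 1/\log N$ and $u_1\ll Nv_2$ in exactly the two roles you describe, and locates the type-2 mutation by the same uniform-over-currently-type-1-cells argument. The only cosmetic differences are that the paper uses upper and lower bounds on the hazard rather than your exact integral (to absorb the $O(1)$ phase offset between the stem-cell mutation time and the integer split times, and a dominating model $M_2'$ for the tail of $S$), and that under the defining condition \eqref{eq} on $v_2$ your claim $S=\alpha l+O_p(1)$ should really be $\alpha l+o_p(l)$ --- which, as you yourself note, is all that the final division by $l$ requires.
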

The goal of this section is to prove Lemma \ref{ssRegime}. It will be shown later that the conditions used in Lemma \ref{ssRegime} for the sd regime are the only relevant conditions.

Define $X_1$ to be the time at which the cancer causing type-1 mutation occurs and define $X_2$ to be the time after the cancer causing type-1 mutation until the first type-2 mutation. Note that because the stem cell is the only cell that gets type-1 mutations in models $M_2$ and $M_3$ that the first successful type-1 mutation is also the cancer causing type-1 mutation.

\begin{Lemma} \label{oddProp}
Consider the model $M_2$. For time $t \leq \log N$ after the stem cell receives a type-1 mutation we have
$$
P(X_2 > t) \geq e^{-2^{t+2}v_2} \mbox{ and }P(X_2 > t) \leq e^{-(2^{t-2}-2)v_2}.
$$
\end{Lemma}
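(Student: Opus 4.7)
The plan is to exploit the fact that in $M_2$ daughters inherit their type-1 mutation only from the stem cell, so the growth of the mutated sub-population after the stem-cell mutation is entirely deterministic. Shifting time so that the stem cell mutates at $0$, at each integer time $j \ge 1$ the stem cell produces a mutated generation-$1$ daughter, and by time $k$ that daughter has been doubled $k-j$ times, contributing $2^{k-j}$ mutated cells of generation $k-j+1$. Summing over $j = 1, \dots, k$ gives that at every $s \in [k,k+1)$ with $0 \le k \le l$ the number of mutated daughter cells is
\[
M(s) \;=\; 2^{\lfloor s \rfloor} - 1.
\]
The hypothesis $t \le \log N = l$ guarantees that no mutated daughter has yet been lost to apoptosis, so this formula is exact throughout $[0,t]$.

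Each daughter cell in $M_2$ carries an independent rate-$v_2$ Poisson clock whose marks become type-2 mutations precisely when the cell is already type-$1$. Superposing these clocks over the mutated sub-population produces an inhomogeneous Poisson process of rate $v_2 M(s)$, and hence
\[
P(X_2 > t) \;=\; \exp\!\left(-v_2 \int_0^t M(s)\, ds\right), \qquad
\int_0^t M(s)\, ds \;=\; 2^{\lfloor t\rfloor} - 1 - \lfloor t \rfloor + \bigl(2^{\lfloor t\rfloor}-1\bigr)(t - \lfloor t\rfloor).
\]

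The lemma then reduces to bounding this integral by the deliberately loose constants $2^{t-2}-2$ and $2^{t+2}$. For the upper bound I would use $\int_0^t M(s)\, ds \le 2\cdot 2^{\lfloor t\rfloor} - 2 \le 2^{t+1}\le 2^{t+2}$, which gives $P(X_2 > t) \ge e^{-2^{t+2} v_2}$. For the lower bound I would retain only the piecewise-constant contribution, $\int_0^t M(s)\, ds \ge 2^{\lfloor t\rfloor} - 1 - \lfloor t\rfloor \ge 2^{t-1} - 1 - t$, and observe that $2^{t-1} - 1 - t \ge 2^{t-2} - 2$ is equivalent to $2^{t-2} \ge t - 1$, which holds for all $t \ge 3$. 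For $t < 3$ one has $2^{t-2} - 2 \le 0$, so the claimed upper bound on $P(X_2 > t)$ is at least $1$ and holds trivially. There is no real obstacle here --- the entire lemma is a bookkeeping argument given Poisson superposition, and the constants $2^{t\pm 2}$ are chosen loose enough to absorb both the non-integer-$t$ correction and the sub-leading $\lfloor t\rfloor$ term.
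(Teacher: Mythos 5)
There is a genuine (though repairable) gap: your claim that, after shifting time so the stem cell mutates at $0$, the mutated sub-population is \emph{exactly} $M(s)=2^{\lfloor s\rfloor}-1$ is false. The splits occur at the integer times of the global clock, while the stem-cell mutation occurs at a continuous (exponentially distributed) random time $T$; the number of splits in the interval $(T,T+s]$ is $\lfloor s\rfloor$ or $\lfloor s\rfloor+1$ depending on the fractional offset of $T$ relative to the split schedule. So $M(s)=2^{k}-1$ with $k\in\{\lfloor s\rfloor,\lfloor s\rfloor+1\}$, and your formula is only the minimal case. Consequently your identity $P(X_2>t)=\exp\bigl(-v_2\int_0^t M(s)\,ds\bigr)$ is really only the inequality $P(X_2>t)\le\exp\bigl(-v_2\int_0^t(2^{\lfloor s\rfloor}-1)\,ds\bigr)$: your upper bound on $P(X_2>t)$ is therefore sound, but your lower bound $P(X_2>t)\ge e^{-2^{t+2}v_2}$ is derived from an equality that does not hold, and as written that half of the lemma is not proved. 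This phase ambiguity is precisely why the lemma is stated as a two-sided bound with the slack constants $2^{t\pm2}$, and why the paper's proof treats the two extreme scenarios separately: assuming the full one-time-unit delay before the first daughter inherits the mutation (fewest mutated cells) yields the upper bound on $P(X_2>t)$, while assuming zero delay (most mutated cells) yields the lower bound.

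The fix is small and keeps your superposition argument intact: replace the exact formula by the deterministic bracket $2^{\lfloor s\rfloor}-1\le M(s)\le 2^{\lfloor s\rfloor+1}-1$, valid conditionally on any value of $T$, and note that $\int_0^t(2^{\lfloor s\rfloor+1}-1)\,ds\le\sum_{j=0}^{\lfloor t\rfloor}2^{j+1}\le2^{t+2}$, which recovers the lower bound. Aside from this, your route is essentially the paper's: both reduce the problem to integrating the deterministic mutated-cell count against the rate $v_2$, and both use the generous constants $2^{\pm2}$ to absorb the bookkeeping. One further cosmetic point: the claim that $t\le\log N$ rules out apoptosis of mutated cells is also only true up to the same offset, but since apoptosis can only decrease $M(s)$ it does not affect the direction of either bound once the bracket above is in place.
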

\begin{proof}
First we establish the upper bound. After the stem cell gets the first mutation it takes at most one time unit until the mutation is passed along to the first generation daughter cell. Assuming it does take one time unit until the first generation daughter cell inherits the mutation we can get an upper bound on $P(X_2 > t)$. Let time $t=0$ denote the time at which the stem cell receives the type-1 mutation. There are no mutations being acquired by the daughter cells for time $t \in [0,1)$. For time $t \in [1,2)$ the generation 1 daughter cell is the only type-1 daughter cell. So for $t \in [1,2)$ we have $P(X_2 > t) = e^{-(t-1)v_2}$. For time $t \in [2,3)$ the first two generations have the mutation which is a total of 3 cells. Therefore, for $t \in [2,3)$ we have $P(X_2 > t) = e^{-(3(t-2)v_2 + v_2)}$ where the $v_2$ is added because of the probability of having a mutation before time 2. Extending this inductively gives us
$$P(X_2 > t) \leq e^{-[(2^{\lfloor t \rfloor}-1)(t - \lfloor t \rfloor)+\sum_{i = 2}^{\lfloor t \rfloor} (2^{i-1}-1)] v_2} \leq e^{(-2^{t-2}-1)v_2}$$
for any $t \leq \log N$.

For the lower bound we use the same reasoning as above except that we assume it takes 0 time for the generation 1 daughter cell to become a type-1 after the stem cell is a type-1. This gets us
$$P(X_2 > t) \geq e^{-[(2^{\lceil t \rceil}-1)(t - \lfloor t \rfloor)+\sum_{i = 1}^{\lfloor t \rfloor} (2^i-1)] v_2} \geq e^{-2^{t+2}v_2}.$$
\end{proof}

\begin{Lemma} \label{sdsigma}
The location of the second mutation satisfies $\rho(M_2) \rightarrow_p \alpha \wedge 1$.
\end{Lemma}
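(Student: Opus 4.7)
The plan is to pin down the waiting time $X_2$ (the time from the stem-cell type-1 mutation to the first type-2 mutation) and then to observe that, conditional on $X_2=t$, the generation containing the mutation concentrates within $O(1)$ generations of $\min(\lfloor t\rfloor, l)$. Dividing by $l$ washes out the $O(1)$ correction, so $\rho(M_2)$ will have the same limit as $(X_2 \wedge l)/l$, which I claim is $\alpha \wedge 1$.

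First I would show $X_2/l \rightarrow_p \alpha\wedge 1$ (one-sided when $\alpha \geq 1$). Fix $\epsilon>0$. At $t=((\alpha\wedge 1)-\epsilon)l$, Lemma \ref{oddProp} gives $P(X_2>t) \geq \exp(-4N^{(\alpha\wedge 1)-\epsilon}v_2)$, and the exponent vanishes by \eqref{eq}, so $P(X_2 > ((\alpha\wedge 1)-\epsilon)l) \to 1$. When $\alpha<1$ we may take $\epsilon$ small enough that $\alpha+\epsilon<1$; the complementary bound in Lemma \ref{oddProp} then yields $P(X_2>(\alpha+\epsilon)l) \leq \exp(-(2^{(\alpha+\epsilon)l-2}-2)v_2) \to 0$, since \eqref{eq} forces $N^{\alpha+\epsilon}v_2\to\infty$. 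When $\alpha\geq 1$, no upper bound on $X_2/l$ is needed because $\rho(M_2)\leq 1$ is automatic.

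Second, conditional on $X_2=t$ the generation $J$ of the first type-2 mutation is that of the particular type-1 cell whose rate-$v_2$ Poisson process fires at time $t$. Since at any $s \in [n,n+1)$ the type-1 daughter cells populate generations $1,\ldots,\min(n,l)$ with $2^{j-1}$ cells in generation $j$, we have
$$P(J=j \mid X_2=t) = \frac{2^{j-1}}{2^{\min(\lfloor t\rfloor, l)}-1} \quad \text{for } 1\leq j \leq \min(\lfloor t\rfloor, l),$$
so $P(J < \min(\lfloor t\rfloor, l) - K \mid X_2=t) \leq 2^{-K}$ for any fixed integer $K$. Given $\delta>0$, choosing $K$ with $2^{-K}<\delta$ and combining with the concentration of $X_2/l$ yields $\rho(M_2) = J/l \in ((\alpha\wedge 1)-2\epsilon,\,(\alpha\wedge 1)+\epsilon]$ with probability at least $1-2\delta$ for $N$ large. (For the upper side when $\alpha<1$ use $J \leq \lfloor X_2\rfloor$, since a type-2 mutation cannot land in a generation that is not yet populated; for $\alpha\geq 1$ the bound $J\leq l$ is trivial.) The main technical nuisance I anticipate is bookkeeping with the discrete structure — the type-1 population jumps at integer times and the regime switches at $t=l$ once all generations carry the mutation — but these introduce only $O(1)$ corrections that vanish after division by $l$.
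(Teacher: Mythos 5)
Your proposal is correct and follows essentially the same route as the paper: both arguments use Lemma \ref{oddProp} together with \eqref{eq} to pin $X_2$ near $(\alpha\wedge 1)l$ (one-sided when $\alpha\geq 1$), and then use the fact that, given the time of the first type-2 mutation, its location is uniform over the type-1 daughter cells, which concentrate geometrically in the top few populated generations. Your explicit truncated-geometric formula for $P(J=j\mid X_2=t)$ is just a sharper phrasing of the paper's bound that the last $k$ populated generations contain all but a fraction $2^{-k}$ of the type-1 cells.
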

\begin{proof}
By Lemma \ref{oddProp} we have $P(X_2 > \log N) \geq e^{-4N v_2}.$ If $\alpha > 1$ then $P(X_2 > \log N)$ converges to 1 and the mutation will spread throughout the entire crypt. If this is the case then any cell is equally likely to have the second mutation. Therefore $P(\rho(M_2) \leq \beta) \leq (2^{\beta l}-1)/(2^l-1)$ for any $\beta \in [0,1)$ so $\rho(M_2) \rightarrow_p 1$.

Now suppose $\alpha \leq 1$. Let $\epsilon > 0$ so that $\alpha - \epsilon > 0$. Then by Lemma \ref{oddProp}
$$P(X_2 > l(\alpha - \epsilon)) \geq e^{-2^{l(\alpha - \epsilon) + 2}v_2}.$$
Because $4 N^{\alpha - \epsilon} v_2 \rightarrow 0$ we get the convergence $P(X_2 > l(\alpha - \epsilon)) \rightarrow 1$. By time $l(\alpha-\epsilon)$ the mutation will have spread to the first $\lfloor l(\alpha-\epsilon) \rfloor$ generations so that for times after $l(\alpha - \epsilon)$ we know that at least $2^{\lfloor l(\alpha-\epsilon)\rfloor}$ cells have the type-1 mutation. Therefore,
$$P(\{\rho(M_2) \leq \beta\} \cap \{X_2 > l(\alpha-\epsilon)\}) \leq (2^{\beta l}-1)/(2^{(\alpha-\epsilon)l-1}-1).$$ Thus, for any $\beta < \alpha - \epsilon$,
$$P(\rho(M_2) \leq \beta) < \frac{2^{\beta l}-1}{2^{(\alpha-\epsilon)l-1}-1} + P(X_2 \leq l(\alpha-\epsilon)) \rightarrow 0$$
Hence $P(\rho(M_2) \geq \alpha - \epsilon) \rightarrow 1$. Because $\epsilon$ may be arbitrarily small we have finished the case when $\alpha = 1$.

Suppose $\alpha < 1$ and let $\epsilon > 0$ so that $\alpha + \epsilon \leq 1$. Then by Lemma \ref{oddProp}
$$P(X_2 > l(\alpha + \epsilon)) \leq e^{-(2^{l(\alpha + \epsilon)-2} - 1) v_2}.$$
Because $N^{\alpha + \epsilon}v_2/4 \rightarrow \infty$, we have $P(X_2 > l(\alpha + \epsilon)) \rightarrow 0$. By time $l(\alpha + \epsilon)$ the mutation has only spread to the first $l(\alpha + \epsilon)$ generations, so $P(\rho(M_2) > \alpha + \epsilon) \rightarrow 0$ where $\epsilon$ is arbitrarily small.
\end{proof}

\begin{Lemma} \label{sdExp}
If $u_1 \ll 1/\log N$ and $u_1 \ll N v_2$ then $u_1 \tau(M_2) \rightarrow_d X$ where $X$ has exponential distribution with parameter 1.
\end{Lemma}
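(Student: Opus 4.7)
The plan is to split $\tau(M_2) = X_1 + X_2$, handle $u_1 X_1$ exactly, show $u_1 X_2 \to_p 0$, and then conclude by Slutsky's theorem. Since in $M_2$ only the stem cell acquires type-1 mutations, and it does so via a Poisson process of rate $u_1$, $X_1$ is already exactly exponentially distributed with parameter $u_1$; so $u_1 X_1$ is exactly exponential$(1)$ for every $N$. Hence it is enough to prove $u_1 X_2 \to_p 0$.

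To control $u_1 X_2$ I would fix $\epsilon > 0$ and estimate $P(X_2 > \epsilon/u_1)$. The hypothesis $u_1 \ll 1/\log N$ forces $\epsilon/u_1 \gg l$ for $N$ large, so the relevant event is that no type-2 mutation has occurred long after the stem cell's type-1 mutation has had ample time to sweep through the entire crypt. The key observation is that $l$ time units after the stem cell becomes type-1, all $N-1$ daughter cells are type-1, so from that moment onward type-2 mutations arrive at the combined rate $(N-1)v_2$. Integrating this rate (equivalently, applying the strong Markov property at time $l$) yields, for every $s \geq 0$,
$$P(X_2 > l + s) \leq e^{-(N-1)v_2 s}.$$
This is the natural continuation of the bookkeeping argument in Lemma \ref{oddProp} past $t = l$, where the rate saturates at $(N-1)v_2$.

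Taking $s = \epsilon/u_1 - l$ (which is eventually positive by $u_1 \ll 1/\log N$) gives
$$P(X_2 > \epsilon/u_1) \leq \exp\!\left(-\frac{(N-1)v_2}{u_1}\bigl(\epsilon - u_1 l\bigr)\right).$$
Both hypotheses now pull their weight: $u_1 \ll Nv_2$ makes $(N-1)v_2/u_1 \to \infty$, while $u_1 \ll 1/\log N$ makes $u_1 l \to 0$, so the exponent tends to $-\infty$ and $P(u_1 X_2 > \epsilon) \to 0$. Combining, $u_1\tau(M_2) = u_1 X_1 + u_1 X_2 \to_d X$.

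I do not anticipate a genuine obstacle here; the only delicate point is conceptual rather than computational, namely verifying that both hypotheses are each required in the right place. Without $u_1 \ll 1/\log N$ the factor $\epsilon - u_1 l$ need not remain bounded away from $0$, and without $u_1 \ll Nv_2$ there would be no decay rate in the exponent at all. This is also the structural reason why these are exactly the conditions that carve out the sd regime in case 4 of Theorem \ref{mainTheorem}.
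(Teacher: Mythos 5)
Your proposal is correct and follows essentially the same route as the paper: the same decomposition $\tau(M_2)=X_1+X_2$ with $u_1X_1$ exactly exponential$(1)$, and the same bound $P(X_2>\log N+s)\leq e^{-(N-1)v_2 s}$ obtained by waiting $\log N$ time for the stem-cell mutation to sweep the crypt (the paper packages this as a coupled auxiliary model $M_2'$ with $X_2\leq X_2'$, but the estimate and the way the two hypotheses enter are identical to yours).
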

\begin{proof}
Since the stem cell is getting mutations according to a Poisson process at rate $u_1$ we have that $u_1 X_1$ is an exponentially distributed random variable with parameter 1. This leaves us to show $u_1 X_2 \rightarrow_p 0$.

Suppose we consider a new model $M_2'$ which is the same as model $M_2$ except that the type-2 mutations can only occur on daughter cells $\log N$ time after the stem cell has a type-1 mutation. We can couple models $M_2$ and $M_2'$ so that the same Poisson processes are marking the mutations on the daughter cells in each model but that any proposed type-2 mutation is rejected in model $M_2'$ until $\log N$ time after the stem cell mutation. This way $X_1$ is the same in models $M_2$ and $M_2'$. Also, if we let $X_2' = \tau(M_2')-X_1$ then $X_2' \geq X_2$. Therefore it is enough to show that $u_1 X_2' \rightarrow_p 0$.

If we wait $\log N$ time after the stem cell receives its type-2 mutation then all of the daughter cells will be type-1. Then the $(N-1)$ daughter cells are getting type-2 mutations at rate $v_2$. Thus for any fixed $N$ we have
$$P(X_2' > t) = 1_{[0,\log N]}(t) + e^{-v_2 (N-1) (t-\log N)}1_{(\log N,\infty]}(t).$$
Let $\epsilon > 0$. Then
$$P(u_1 X_2' > \epsilon) = 1_{[0,\log N]}\left(\frac{\epsilon}{u_1}\right) + e^{-v_2 (N-1) (\epsilon/u_1-\log N)}1_{(\log N,\infty]}\left(\frac{\epsilon}{u_1}\right).$$
By our assumptions, $u_1 \log N \rightarrow 0$ so for $N$ large enough this becomes
$$P(u_1 X_2' > \epsilon) = e^{-v_2 (N-1) (\epsilon/u_1-\log N)}.$$
Also by our assumptions, $-v_2 (N-1) (\epsilon/u_1-\log N) \sim -v_2 N \epsilon/u_1 \rightarrow -\infty$, so
$$P(u_1 X_2' > \epsilon) \rightarrow 0.$$
\end{proof}

\begin{proof}[Proof of Proposition \ref{ssRegime}]
Combining Lemmas \ref{sdsigma} and \ref{sdExp} we get part 1 of Proposition \ref{ssRegime}.

As in model $M_2$, $u_1 X_1$ has the exponential distribution with parameter 1. To prove part 2 of Proposition \ref{ssRegime} we need to show that $u_1 X_2 \rightarrow_p 0$. Let $\epsilon > 0$. Then
$$P(u_1 X_2 > \epsilon) = P(X_2 > \epsilon/u_1) = e^{-\epsilon u_2/u_1}.$$
Since $u_2/u_1 \rightarrow \infty$ we have $P(u_1 X > \epsilon) \rightarrow 0$.

Lastly we prove part 3 of Proposition \ref{ssRegime}. In model $M_3$ both mutations occur on the stem cell. In this case $u_1 X_1$ and $u_2 X_2$ are both exponentially distributed with parameter 1. Because $u_1 X_2 = (u_1/u_2) u_2 X_2$ we have that $u_1 X_2$ is exponentially distributed with parameter $u_2/u_1$. By assumption, $u_2/u_1 \rightarrow 1/A$ so $u_1 X_2$ converges in distribution to $Z$. The random variables $X_1$ and $X_2$ are independent for each $N$ so
$$u_1 \tau(M_3) = u_1 X_1+ u_1 X_2 \rightarrow_d X+Z.$$
\end{proof}

\section{Proof of the Theorem}
We will couple the models $H_2$, $M_1$, $M_2$ and $M_3$ so that the Poisson processes used in models $M_1$, $M_2$ and $M_3$ are the appropriate subcollections of Poisson processes which are used in model $H_2$. Let $T$ be the time that the stem cell becomes a type-1. Note, because the stem cell cannot inherit a type-1 mutation and $H_2$, $M_2$ and $M_3$ are coupled, that $T$ will be the same for models $H_2$, $M_2$ and $M_3$.

Let $X$ be exponentially distributed with parameter 1 and let $Y$ be a random variable with the Rayleigh distribution.

\begin{Lemma} \label{DR1}
Suppose $v_1 v_2 \ll 1/N(\log N)^2$. If $u_1 \ll v_1 v_2 N \log N$ then $P(\tau(M_1) < T) \rightarrow 1$. If $u_1 \gg v_1 v_2 N \log N$ then $P(\tau(M_3) < \tau(M_1)) \rightarrow 1$.
\end{Lemma}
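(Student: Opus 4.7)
The plan is to exploit Lemma \ref{ConLem} in both cases, treating $\tau(M_1)$ and $T$ (resp.\ $\tau(M_3)$) as independent waiting times arising from disjoint subcollections of Poisson processes in the coupling: $\tau(M_1)$ is determined entirely by the daughter-cell processes while $T$ and $\tau(M_3)$ depend only on the stem-cell processes.

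For the first assertion I would simply note that $T \sim \mathrm{Exp}(u_1)$, so $u_1 T \rightarrow_d X$ automatically, while Lemma \ref{ddExp} gives $(\alpha \wedge 1) v_1 v_2 N (\log N) \tau(M_1) \rightarrow_d X$. Because $\alpha$ is a fixed positive constant the hypothesis $u_1 \ll v_1 v_2 N \log N$ is equivalent to $u_1 \ll (\alpha \wedge 1) v_1 v_2 N \log N$, and applying Lemma \ref{ConLem} with $X_n = T$, $\alpha_n = u_1$, $Y_n = \tau(M_1)$ and $\beta_n = (\alpha \wedge 1) v_1 v_2 N \log N$ yields $P(T \geq \tau(M_1)) \rightarrow 1$. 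Continuity of the limiting laws upgrades this to $P(\tau(M_1) < T) \rightarrow 1$.

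For the second assertion I would write $\tau(M_3) = T + W$, where $W$ is the waiting time after $T$ for the first mark of the type-$2$ Poisson process on the stem cell. By independence of the stem-cell processes and memorylessness, $W \sim \mathrm{Exp}(u_2)$ independent of $T$, so using the standing assumption $u_1 \leq u_2$,
$$E[u_1 \tau(M_3)] = 1 + u_1/u_2 \leq 2,$$
and hence $u_1 \tau(M_3)$ is tight. Given $\epsilon > 0$ I would choose $K$ with $P(u_1 \tau(M_3) > K) < \epsilon$ for all large $N$ and bound
$$P(\tau(M_3) \geq \tau(M_1)) \leq P(\tau(M_3) > K/u_1) + P(\tau(M_1) \leq K/u_1).$$
The first term is at most $\epsilon$, and the second tends to $0$ because $(\alpha \wedge 1) v_1 v_2 N \log N \cdot \tau(M_1) \rightarrow_d X$ while the scaled threshold $K(\alpha \wedge 1) v_1 v_2 N \log N / u_1 \rightarrow 0$ under $u_1 \gg v_1 v_2 N \log N$. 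Letting $\epsilon \rightarrow 0$ gives the claim.

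The one mildly delicate point is that $u_1 \tau(M_3)$ does not in general converge in distribution to a single limit law without information on the ratio $u_1/u_2$, so Lemma \ref{ConLem} cannot be cited verbatim in the second case. The tightness-plus-Markov bound above sidesteps this cleanly; alternatively one could couple $W$ with an independent $\mathrm{Exp}(u_1)$ variable that dominates it (valid because $u_1 \leq u_2$) and apply Lemma \ref{ConLem} directly to the dominating sum, whose scaling by $u_1$ converges to a $\mathrm{Gamma}(2,1)$.
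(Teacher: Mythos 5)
Your proof is correct. The first assertion is handled exactly as in the paper: $u_1 T \rightarrow_d X$, Proposition \ref{ddRegime} (part 1) for $\tau(M_1)$, independence of the stem-cell and daughter-cell Poisson processes, and Lemma \ref{ConLem}; the absorption of the constant $\alpha \wedge 1$ and the removal of ties via continuity are fine. For the second assertion you diverge slightly from the paper. The paper exploits the standing assumption $u_1 \leq u_2$ through a monotonicity reduction: decreasing $u_2$ to $u_1$ can only decrease $P(\tau(M_3) < \tau(M_1))$, and in the reduced model $u_1\tau(M_3)$ has exactly the Gamma$(2,1)$ law, so Lemma \ref{ConLem} applies verbatim with the roles of the two sequences reversed ($\alpha_n = (\alpha\wedge 1)v_1v_2N\log N \ll u_1 = \beta_n$). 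You instead establish tightness of $u_1\tau(M_3)$ from the moment bound $E[u_1\tau(M_3)] = 1 + u_1/u_2 \leq 2$ and run the union bound over the threshold $K/u_1$ by hand. Both arguments are valid; your version avoids having to identify any limiting law for $u_1\tau(M_3)$ (the delicate point you correctly flag), at the cost of redoing the two-scales comparison that Lemma \ref{ConLem} packages, while the paper's stochastic-domination step is the shorter route and is essentially the alternative you sketch in your final sentence.
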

\begin{proof}
By part 1 of Proposition \ref{ddRegime} $(\alpha \wedge 1) v_1 v_2 N (\log N) \tau(M_1) \rightarrow_d X$. Mutations to the stem cell occur at rate $u_1$ so $u_1 T \rightarrow_d X.$ Because the mutations Poisson processes which mark the mutations in model $M_1$ are independent of the Poisson process that marks the mutations on the stem cell, if $u_1 \ll v_1 v_2 N \log N$ then $P(\tau(M_1) < T) \rightarrow 1$ by Lemma \ref{ConLem}.

On the other hand, suppose $u_1 \gg v_1 v_2 N \log N$. We are assuming $u_1 \leq u_2$ so we could decrease $P(\tau(M_3) < \tau(M_1))$ by decreasing $u_2$ to $u_1$. Then the distribution of $u_1 \tau(M_3)$ is the distribution of the sum of two independent exponentially distributed random variables, $P(u_1 \tau(M_3) \leq t) \geq 1-e^{-t}-te^{-t}$. By Lemma \ref{ConLem}, $P(\tau(M_3) < \tau(M_1)) \rightarrow 1.$
\end{proof}

\begin{Lemma} \label{DR2}
Suppose $1/N (\log N)^2 \ll v_1 v_2 \ll 1/N$. If $u_1 \ll \sqrt{v_1 v_2 N}$ then $P(\tau(M_1) < T) \rightarrow 1$. If $u_1 \gg \sqrt{v_1 v_2 N}$ then $P(\tau(M_3) < \tau(M_1)) \rightarrow 1$.
\end{Lemma}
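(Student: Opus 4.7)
The plan is to mirror the proof of Lemma \ref{DR1}, using part 2 of Proposition \ref{ddRegime} in place of part 1. The inputs I need are: (i) $\sqrt{v_1 v_2 N}\, \tau(M_1) \to_d Y$ (Rayleigh), which holds by part 2 of Proposition \ref{ddRegime} in exactly the regime $1/N(\log N)^2 \ll v_1 v_2 \ll 1/N$; (ii) $T$ is exponential with rate $u_1$, since in the coupled construction the stem cell acquires its type-1 mutation via a Poisson process of rate $u_1$, so $u_1 T \to_d X$; (iii) in the coupling, $T$ uses only the stem-cell Poisson process while $\tau(M_1)$ uses only daughter-cell Poisson processes, so these two random variables are independent for every $N$.

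For the first assertion, assume $u_1 \ll \sqrt{v_1 v_2 N}$. Apply Lemma \ref{ConLem} with $\alpha_n = u_1$, $X_n = T$, $\beta_n = \sqrt{v_1 v_2 N}$, $Y_n = \tau(M_1)$. The independence noted above and the fact that $\alpha_n \ll \beta_n$ yield $P(T \geq \tau(M_1)) \to 1$, i.e.\ $P(\tau(M_1) < T) \to 1$.

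For the second assertion, assume $u_1 \gg \sqrt{v_1 v_2 N}$. Since $u_1 \leq u_2$, replacing the type-2 rate on the stem cell from $u_2$ down to $u_1$ can only delay the second stem-cell mutation, producing a stopping time $\tilde\tau \geq \tau(M_3)$ pointwise. Under this modification, $u_1 \tilde\tau$ is exactly the sum of two independent exponential(1) random variables, whose distribution (Gamma$(2,1)$) is a fixed positive law; moreover $\tilde\tau$ is still built from the stem-cell Poisson processes only, hence independent of $\tau(M_1)$. Now apply Lemma \ref{ConLem} with $\alpha_n = \sqrt{v_1 v_2 N}$, $X_n = \tau(M_1)$, $\beta_n = u_1$, $Y_n = \tilde\tau$; the hypothesis $\sqrt{v_1 v_2 N} \ll u_1$ gives $P(\tau(M_1) \geq \tilde\tau) \to 1$. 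Since $\tilde\tau \geq \tau(M_3)$, we conclude $P(\tau(M_3) < \tau(M_1)) \to 1$.

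The only mildly delicate points are to check that the coupling really does make $T$ and $\tau(M_1)$ independent (which it does, since $M_1$ was defined to have no stem-cell mutation processes at all) and to justify replacing $u_2$ by $u_1$ rather than trying to prove a two-sided limit for $u_1 \tau(M_3)$ directly; the stochastic domination argument bypasses the need for convergence in distribution of $u_1 \tau(M_3)$ itself. No new estimates are required beyond those already assembled in Proposition \ref{ddRegime} and the general Lemma \ref{ConLem}.
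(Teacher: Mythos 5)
Your proposal is correct and follows essentially the same route as the paper: part 2 of Proposition \ref{ddRegime} plus $u_1 T \rightarrow_d X$ and independence feed into Lemma \ref{ConLem} for the first claim, and for the second claim the paper likewise reduces $u_2$ to $u_1$ so that the rescaled stem-cell waiting time has the fixed Gamma$(2,1)$ law before invoking Lemma \ref{ConLem} (the paper merely points back to the corresponding step in Lemma \ref{DR1}, which you have written out explicitly). No gaps.
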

\begin{proof}
Let $u_1 \ll \sqrt{v_1 v_2 N}$. By part 2 of Proposition \ref{ddRegime} we have $\sqrt{v_1 v_2 N} \tau(M_1) \rightarrow_d Y$. The stem cell is getting mutations at rate $u_1$ so $u_1 T \rightarrow X$. The Poisson processes that are marking the mutations in model $M_1$ are independent of the Poisson process that marks mutations on the stem cell, so the result follows by Lemma \ref{ConLem}.

If $u_1 \gg v_1 v_2 N \log N$ then the proof follows by the same reasoning as used in Lemma \ref{DR1} when considering $u_1 \gg v_1 v_2 N \log N$.
\end{proof}

\begin{Lemma} \label{DR3}
If $v_1 v_2 \gg 1/N$ then $P(\tau(M_1) < T) \rightarrow 1$.
\end{Lemma}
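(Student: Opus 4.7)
The plan is to imitate Lemmas \ref{DR1} and \ref{DR2}, using part 3 of Proposition \ref{ddRegime} in place of parts 1--2. Specifically, part 3 of Proposition \ref{ddRegime} tells us that under the hypothesis $v_1 v_2 \gg 1/N$ we have $\sqrt{v_1 v_2 N}\,\tau(M_1) \to_d Y$, where $Y$ is Rayleigh. On the other hand, by the coupling described at the start of this section, $T$ is the first arrival of a rate-$u_1$ Poisson process on the stem cell, so $u_1 T$ is exactly Exp$(1)$ for every $N$; in particular $u_1 T \to_d X$. Moreover, the Poisson process defining $T$ is independent of all the Poisson processes marking mutations on daughter cells, and $\tau(M_1)$ is measurable with respect to the latter family; hence $T$ and $\tau(M_1)$ are independent, as required by Lemma \ref{ConLem}.

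I then apply Lemma \ref{ConLem} with $\alpha_n = u_1$, $X_n = T$, $\beta_n = \sqrt{v_1 v_2 N}$, $Y_n = \tau(M_1)$. The hypothesis $\alpha_n \ll \beta_n$ reads $u_1 \ll \sqrt{v_1 v_2 N}$, and this holds trivially: by standing assumption $u_1 \to 0$, while $v_1 v_2 N \to \infty$ by the hypothesis of the lemma, so $\sqrt{v_1 v_2 N} \to \infty$. Lemma \ref{ConLem} therefore yields $P(T \geq \tau(M_1)) \to 1$, and since $\tau(M_1)$ and $T$ have continuous distributions we may replace $\geq$ by $>$ to obtain $P(\tau(M_1) < T) \to 1$, as desired. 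There is no real obstacle here beyond checking that part 3 of Proposition \ref{ddRegime} and the standing assumption $u_1 \to 0$ together force the separation-of-scales hypothesis of Lemma \ref{ConLem}; unlike in Lemmas \ref{DR1} and \ref{DR2}, the second half of the dichotomy is absent because under $v_1 v_2 \gg 1/N$ the time $\tau(M_1)$ is already $o_p(1)$, so the stem cell cannot beat the daughter cells regardless of $u_1$.
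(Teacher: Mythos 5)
Your proposal is correct and follows essentially the same route as the paper: invoke part 3 of Proposition \ref{ddRegime} for $\sqrt{v_1 v_2 N}\,\tau(M_1) \rightarrow_d Y$, note $u_1 T \rightarrow_d X$ and the independence of the relevant Poisson processes, and apply Lemma \ref{ConLem} using $u_1 \ll 1 \ll \sqrt{v_1 v_2 N}$. Your extra remark about passing from $\geq$ to $>$ via continuity of the distributions is a small refinement the paper leaves implicit.
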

\begin{proof}
By part 3 of Lemma \ref{ddRegime} we have $\sqrt{v_1 v_2 N} \tau(M_1) \rightarrow_d Y$. The stem cell is getting mutations at rate $u_1$ so $u_1 T \rightarrow X$. The Poisson processes that are marking the mutations in model $M_1$ are independent of the Poisson process that marks mutations on the stem cell, so the result follows by Lemma \ref{ConLem} since $u_1 \ll 1 \ll \sqrt{v_1 v_2 N}$.
\end{proof}

\begin{Lemma} \label{DR4}
If $u_2 \ll 1/\log N$ and $u_2 \ll N v_2$ then $P(\tau(M_2) < \tau(M_3)) \rightarrow 1$.
\end{Lemma}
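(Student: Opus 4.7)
The plan is to compare the two post-mutation waiting times. Under the coupling fixed at the start of Section 6, the time $T$ at which the stem cell becomes type-1 is the same in $M_2$ and $M_3$. Write $X_2^{(i)} = \tau(M_i) - T$ for $i = 2, 3$, so that the desired event is $\{X_2^{(2)} < X_2^{(3)}\}$. By the memoryless property, $X_2^{(3)}$ is just the first mark after $T$ of the stem cell's type-2 Poisson process, hence is exponentially distributed with rate $u_2$. On the other hand $X_2^{(2)}$ depends only on the daughter cells' Poisson processes (the stem cell never gets a type-2 mutation in $M_2$), so $X_2^{(2)}$ and $X_2^{(3)}$ use disjoint families of Poisson processes and are therefore independent.

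Next I would stochastically dominate $X_2^{(2)}$ exactly as in the proof of Lemma \ref{sdExp}: introduce the auxiliary model $M_2'$ in which all proposed daughter type-2 mutations during the interval $[T, T + \log N]$ are rejected. After waiting $\log N$ time units the type-1 mutation has spread to every daughter, so $X_2^{(2)} \leq \log N + W$ where $W$ is exponentially distributed with rate $(N-1) v_2$, independent of $X_2^{(3)}$. Combining,
\begin{align*}
P(\tau(M_2) < \tau(M_3)) \;\geq\; P(\log N + W < X_2^{(3)}) \;=\; e^{-u_2 \log N} \cdot \frac{(N-1) v_2}{(N-1) v_2 + u_2}.
\end{align*}
The first factor tends to $1$ because $u_2 \log N \to 0$ (from $u_2 \ll 1/\log N$), and the second factor tends to $1$ because $u_2/(Nv_2) \to 0$ (from $u_2 \ll N v_2$).

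The only place that needs any care is the independence step: one must verify that the residual time $X_2^{(3)}$ really does not look at the daughter-cell Poisson processes that govern $X_2^{(2)}$ (and vice versa), so that the product $e^{-u_2 \log N} \cdot E[e^{-u_2 W}]$ above is legitimate. Once the coupling is unpacked, the two Poisson-process families are manifestly disjoint, so there is no real obstacle, and the rest of the argument is the short computation displayed above.
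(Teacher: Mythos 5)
Your proposal is correct and follows essentially the same route as the paper: both introduce the auxiliary model $M_2'$ that suppresses daughter type-2 mutations for $\log N$ time after the stem cell mutation, use $T_2 \leq T_2' = \log N + W$ with $W$ exponential of rate $(N-1)v_2$ independent of the stem cell's residual clock, and arrive at the same product $e^{-u_2\log N}\cdot (N-1)v_2/((N-1)v_2+u_2) \to 1$. The only cosmetic difference is that the paper conditions on $\{T_3 \geq \log N\}$ and invokes memorylessness where you compute the Laplace transform $E[e^{-u_2 W}]$ directly; these are the same calculation.
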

\begin{proof}
Because models $M_2$ and $M_3$ are coupled, the stem cell in each model will receive a type-1 mutation at the same time. After this the Poisson processes marking the mutations in models $M_2$ and $M_3$ are independent. Let $T_2$ be the time it takes for a type-2 mutation to occur in model $M_2$ after the stem cell has a type-1 mutation and let $T_3$ be the time it takes for a type-2 mutation to occur in model $M_3$ after the stem cell has a type-1 mutation. Then $P(\tau(M_2) < \tau(M_3)) = P(T_2 < T_3)$.

Consider again the model $M_2'$ that was introduced in the proof of Lemma \ref{sdExp} which is the same as model $M_2$ except that the type-2 mutations can only occur on daughter cells $\log N$ time after the stem cell has a type-1 mutation. We can couple models $M_2$ and $M_2'$ as we did before so that the time of the stem cell mutation is the same in models $M_2$ and $M_2'$. Let $T_2'$ be the time it takes to acquire a type-2 mutation in model $M_2'$ after the stem cell has a type-1 mutation. Then $T_2' \geq T_2$ so it is enough to show that $P(T_2' < T_3) \rightarrow 1$.

If we wait $\log N$ time after the stem cell receives its type-2 mutation then all of the daughter cells will be type-1. Then the $(N-1)$ daughter cells are getting type-2 mutations at rate $v_2$. Thus for any fixed $N$ we have
$$P(T_2' > t) = 1_{[0,\log N]}(t) + e^{-v_2 (N-1) (t-\log N)}1_{(\log N,\infty]}(t).$$
Let $\epsilon > 0$. Then
$$P(T_2' < T_3) = P(T_2' < T_3|T_3 < \log N)P(T_3 < \log N) + P(T_2' < T_3|T_3 \geq \log N)P(T_3 \geq \log N).$$
Because $u_2 \ll 1/\log N$ and $u_2 T_3$ has the exponential distribution with parameter 1, $P(T_3 \geq \log N) \rightarrow 1$. The memoryless property of the exponential distribution gives us that
$$P(T_2' < T_3|T_3 \geq \log N) = \frac{v_2(N-1)}{v_2(N-1)+u_2} \rightarrow 1$$
which completes the proof.
\end{proof}

\begin{Lemma} \label{DR5}
If $u_2 \gg 1/\log N$ or $u_2 \gg N v_2$ then $P(\tau(M_3) < \tau(M_2)) \rightarrow 1$.
\end{Lemma}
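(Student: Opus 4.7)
The plan is to imitate the structure of Lemma \ref{DR4}: couple $M_2$ and $M_3$ through the Poisson process that marks the stem cell so that both models share the same stem-cell type-1 time $T$. Let $T_2 = \tau(M_2)-T$ and $T_3 = \tau(M_3)-T$ be the two residual waiting times. They depend on disjoint Poisson processes and are therefore independent, with $T_3$ exponential of rate $u_2$. Integrating out $T_3$ gives
$$P(\tau(M_3) < \tau(M_2)) = P(T_3 < T_2) = 1 - E[e^{-u_2 T_2}],$$
so it suffices to show $E[e^{-u_2 T_2}]\to 0$ under either hypothesis, and I will treat the two hypotheses separately.

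In the first case, $u_2 \gg 1/\log N$, I would use the cut-off bound $E[e^{-u_2 T_2}] \le P(T_2 \le t_0) + e^{-u_2 t_0}$ with $t_0 = \sqrt{(\log N)/u_2}$. Then $u_2 t_0 = \sqrt{u_2 \log N}\to\infty$, while $t_0/\log N = 1/\sqrt{u_2 \log N}\to 0$, so in particular $t_0 \le \log N$ eventually. Lemma \ref{oddProp} gives $P(T_2 > t_0)\ge e^{-2^{t_0+2}v_2}$, hence $P(T_2\le t_0) \le 2^{t_0+2}v_2$. Because $t_0 = o(\log N)$ we have $2^{t_0} = N^{o(1)}$, and by \eqref{eq} $v_2 \le N^{-\alpha+\epsilon}$ for $\epsilon$ arbitrarily small, so $2^{t_0}v_2 \to 0$.

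In the second case, $u_2 \gg Nv_2$, pushing the Lemma \ref{oddProp} bound to $t_0$ large enough that $u_2 t_0\to\infty$ runs into trouble, because $2^{t}v_2$ becomes large well before $t$ reaches $\log N$. Instead I would dominate $M_2$ by an auxiliary model $\tilde M_2$ identical to $M_2$ except that at time $T$ every daughter is declared type-1 (so type-2 mutations on daughters are never rejected). Coupling the daughter-cell type-2 Poisson processes, the first type-2 time $\tilde T_2$ in $\tilde M_2$ satisfies $\tilde T_2 \le T_2$ pointwise, and $\tilde T_2$ is exponential of rate $(N-1)v_2$. Independence of $T_3$ and $\tilde T_2$ then gives
$$P(T_3 < T_2) \ge P(T_3 < \tilde T_2) = \frac{u_2}{u_2+(N-1)v_2} \to 1,$$
since $u_2/(Nv_2)\to\infty$.

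The only real obstacle is recognizing that the two hypotheses call for different tools: Case 1 relies on the stem cell mutating before the spread fills the crypt (use Lemma \ref{oddProp}), while Case 2 compares against the already-filled-crypt rate $(N-1)v_2$ (use the dominating $\tilde M_2$). Once the right choice of $t_0$ in Case 1 and the right dominating model in Case 2 are in hand, the remaining estimates are routine, with the Case 1 bound essentially depending on the qualitative control $v_2 = N^{-\alpha+o(1)}$ from \eqref{eq}.
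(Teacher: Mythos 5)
Your proof is correct, and it splits into one half that matches the paper and one that does not. For the hypothesis $u_2 \gg N v_2$, your dominating model $\tilde{M}_2$ (all daughters instantly type-1 at time $T$) is exactly the paper's $M_2''$, and your explicit computation $P(T_3 < \tilde{T}_2) = u_2/(u_2+(N-1)v_2) \rightarrow 1$ is interchangeable with the paper's appeal to Lemma \ref{ConLem}. For the hypothesis $u_2 \gg 1/\log N$ you take a genuinely different route: the paper invokes Lemma \ref{sdsigma} ($\rho(M_2) \rightarrow_p \alpha \wedge 1$) to conclude that $T_2 \geq ((\alpha\wedge 1)-\delta)\log N$ with high probability, since the type-1 mutation must physically spread down to generation $((\alpha\wedge1)-\delta)l$ before the second mutation can occur there, and then notes that $T_3 \ll \log N$ w.h.p.\ because $u_2\log N \rightarrow \infty$. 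You instead work at the intermediate scale $t_0=\sqrt{(\log N)/u_2}$, which satisfies $1/u_2 \ll t_0 \ll \log N$, and bound $P(T_2\le t_0)\le 2^{t_0+2}v_2 \rightarrow 0$ directly from Lemma \ref{oddProp} together with $v_2=N^{-\alpha+o(1)}$ and $\alpha>0$ from \eqref{eq}; combined with $e^{-u_2t_0}\rightarrow 0$ and the identity $P(T_3<T_2)=1-E[e^{-u_2T_2}]$ (valid by the independence of the residual clocks, which the coupling provides), this closes the case. Your version is more quantitative and self-contained, needing only \eqref{eq} and Lemma \ref{oddProp} rather than the full strength of Lemma \ref{sdsigma}; the paper's version reuses machinery it has already built and yields the stronger qualitative fact that $T_2$ is of order $\log N$ rather than merely $\gg 1/u_2$. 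Both arguments are sound.
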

\begin{proof}
Because models $M_2$ and $M_3$ are coupled, the stem cell in each model will receive a type-1 mutation at the same time. After this the Poisson processes marking the mutations in models $M_2$ and $M_3$ are independent. Let $T_2$ be the time it takes for a type-2 mutation to occur in model $M_2$ after the stem cell has a type-1 mutation and let $T_3$ be the time it takes for a type-2 mutation to occur in model $M_3$ after the stem cell has a type-1 mutation. Then $P(\tau(M_3) < \tau(M_2)) = P(T_3 < T_2)$.

Suppose $u_2 \gg 1/\log N$. By Lemma \ref{sdsigma} we know that $\rho(M_2) \rightarrow_p \alpha \wedge 1$. Therefore, if $0 < \delta < (\alpha \wedge 1)$ then $P(\rho(M_2) > (\alpha \wedge 1)-\delta) \rightarrow 1$. If $\rho(M_2) > (\alpha \wedge 1)-\delta$ then the second mutation occurs on a generation higher than $((\alpha \wedge 1)-\delta)l$. Since the stem cell is the only cell that gets type-1 mutations in model $M_2$, this means that $T_2 \geq \lfloor((\alpha \wedge 1)-\delta)l\rfloor$ because it takes that much time for the type-1 mutation to spread to the generation $((\alpha \wedge 1)-\delta)l$ daughter cells. On the other hand, in model $M_3$ the second mutation is occurring at rate $u_2$ so that $u_2 T_3$ is exponentially distributed with parameter 1. Then $P(T_3 < K \log N) = P(u_2 T_3 < u_2 K \log N) \rightarrow 1$ for any positive number $K$ since $u_2 \log N \rightarrow \infty$. Therefore $P(T_3 < T_2) \rightarrow 1$.

Suppose $u_2 \gg N v_2$. The rate at which type-2 mutations occur in model $M_2$ is always bounded by $(N-1) v_2$. Suppose we consider a new model $M_2''$ which is the same as $M_2$ except that once the stem cell has a type-1 mutation, all of the daughter cells also have a type-1 mutation instantaneously. This can be coupled so that after the stem cell gets a type-1 mutation then any type-2 mutation proposed by a Poisson process on a daughter cell is accepted in model $M_2''$. Then if we let $T_2''$ be the time it takes for a type-2 mutation to occur in model $M_2''$ after the stem cell has a type-1 mutation, $(N-1) v_2 T_2''$ has the exponential distribution with parameter 1. By Lemma \ref{ConLem}, $P(T_3 < T_2'') \rightarrow 1$. Because $T_2 \geq T_2''$ we have the desired result.
\end{proof}

\begin{proof}[Proof of Proposition \ref{mainTheorem}]
From the coupling we have $\tau(H_2) = \tau(M_1) \wedge \tau(M_2) \wedge \tau(M_3)$ because any type-2 mutation which  occurs in model $H_2$ must occur in at least one of the models $M_i$ for some $i$, and if a mutation occurs in model $M_i$ then it will also occur in model $H_2$.

Suppose $P(\tau(M_1) < T) \rightarrow 1$. Before time $T$ models models $M_2$ and $M_3$ are only acquiring mutation on the stem cell. Therefore, models $M_2$ and $M_3$ only have type-0 cells before time $T$ and $P(\tau(M_1) < \tau(M_2) \wedge \tau(M_3)) \rightarrow 1$.

\begin{itemize}
\item By Lemma \ref{DR1} if $v_1 v_2 \ll 1/N(\log N)^2$ and $u_1 \ll v_1 v_2 N \log N$ then $P(\tau(M_1) < T) \rightarrow 1$ so by part 1 of Proposition \ref{ddRegime} and the coupling of $H_2$ with $M_1$ we have $(\alpha \wedge 1) v_1 v_2 N (\log N) \tau(H_2) \rightarrow_d X$. The distribution of $\sigma(H_2)$ converges to a uniform distribution on $((1-\alpha)^+, 1]$ and $\rho(H_2)$ converges in distribution to 1.
\item By Lemma \ref{DR2} if $1/N (\log N)^2 \ll v_1 v_2 \ll 1/N$ and $u_1 \ll \sqrt{v_1 v_2 N}$ then $P(\tau(M_1) < T) \rightarrow 1$ so by part 2 of Proposition \ref{ddRegime} and the coupling of $H_2$ with $M_2$ we have $\sqrt{v_1 v_2 N} \tau(H_2) \rightarrow_d Y$. Both $\sigma(H_2)$ and $\rho(H_2)$ converge in distribution to 1.
\item By Lemma \ref{DR3} if $v_1 v_2 \gg 1/N$ then $P(\tau(M_1) < T) \rightarrow 1$ so by part 3 of Proposition \ref{ddRegime} and the coupling of $H_2$ with $M_1$ we have $\sqrt{N v_1 v_2} \tau(H_2) \rightarrow_d Y$. Both $\sigma(H_2)$ and $\rho(H_2)$ converge in distribution to 1.
\end{itemize}

If either $v_1 v_2 \ll 1/N(\log N)^2$ and $u_1 \gg v_1 v_2 N \log N$ or $1/N(\log N)^2 \ll v_1 v_2 \ll 1/N$ and $u_1 \gg \sqrt{v_1 v_2 N}$ then $P(\tau(M_3) < \tau(M_1)) \rightarrow 1$ by Lemmas \ref{DR1} and \ref{DR2} respectively. Therefore, $P(\tau(M_2) \wedge \tau(M_3) < \tau(M_1)) \rightarrow 1$ (meaning that the cancer causing type-1 mutation occurs on the stem cell). Given these four conditions, we are left only to compare $\tau(M_2)$ and $\tau(M_3)$.

\begin{itemize}
\item By Lemma \ref{DR4} if $u_2 \ll 1/\log N$ and $u_2 \ll N v_2$ then $P(\tau(M_2) < \tau(M_3)) \rightarrow 1$. Because $u_1 \leq u_2$ the hypotheses are true for $u_1$ as well. Therefore, by the coupling of $H_2$ with $M_2$ and part 1 of Proposition \ref{ssRegime} we have $u_1 \tau(H_2) \rightarrow_d X$. The distribution of $\rho(H_2)$ converges to $\alpha \wedge 1$.

\item By Lemma \ref{DR5} if $u_2 \gg 1/\log N$ or $u_2 \gg N v_2$ then $P(\tau(M_3) < \tau(M_2)) \rightarrow 1$. If $u_1 \ll u_2$ then by the coupling of $H_2$ with $M_3$ and part 2 of Proposition \ref{ssRegime} we have $u_1 \tau(H_2) \rightarrow_d X$. If $u_1 \sim A u_2$ then by the coupling of $H_2$ with $M_3$ and part 3 of Proposition \ref{ssRegime} we have $u_1 \tau(H_2) \rightarrow_d X + Z$ where $Z$ is an exponentially distributed random variable with parameter $1/A$ that is independent of $X$.
\end{itemize}
By Lemma \ref{coupleLem} the results hold for model $H_1$ as well.
\end{proof}

\section{The Null Model}
For this section we always have $u_1 = u_2 = v_1 = v_2 = \mu$ and we prove Proposition \ref{nullProp} for model $H_2$. Then Proposition \ref{nullProp} will hold for model $H_1$ as well by Lemma \ref{coupleLem}. We begin this section by pointing out that the conditions of part 5 of Theorem \ref{mainTheorem} always fail in the null model. The two conditions in the first conjunction become $\mu \ll 1/N \log N$. Of the two conditions in the second conjunction, one becomes $\sqrt{N} \ll 1$ which always fails. This reduces all of the conditions in the first bullet point to $\mu \ll 1/N \log N$. The conditions in the second bullet point become $\mu \gg 1/\log N$ or $1 \gg N$, so the conditions in part 5 can only hold if $1/\log N \ll \mu \ll 1/N \log N$ which can never happen.

This shows that the probability that the first type-2 mutation occurs on the stem cell converges to 0. For this reason, we will never consider model $M_3$ in this section.

\begin{proof}[Proof of part 2 of Proposition \ref{nullProp}]
This time we first consider independent models $M_1$ and $M_2$ meaning we do not couple the Poisson processes that mark mutations on the cells within each model. We construct a new model from models $M_1$ and $M_2$ which we will refer to as model $M_{1,2}^-$. The Poisson processes of model $M_2$ always mark the cells in model $M_{1,2}^-$. The Poisson processes that mark the mutations in model $M_1$ mark the daughter cells in model $M_{1,2}^-$ until the stem cell has a type-1 mutation. After the stem cell has a type-1 mutation, the Poisson processes in model $M_1$ no longer mark any of the cells in model $M_{1,2}^-$. This way model $M_{1,2}^-$ behaves exactly like model $M_2$ after the stem cell has a type-1 mutation.

Model $M_{1,2}^-$ is the same as model $H_2$ except that the stem cell cannot get type-2 mutations and for $\log N$ time after the stem cell receives a mutation the type-2 mutations are suppressed on daughter cells that have not inherited the type-1 mutation from the stem cell. Let $T$ be the time at which the stem cell has a mutation and let $T_2 = \tau(M_2) - T$. By the same argument used in Lemma \ref{sdExp} to show $u_1 X_2' \rightarrow_p 0$ we know $\mu T_2 \rightarrow_p 0$. Also, from part 1 of Proposition \ref{ddRegime} we know $P(\mu \tau(M_1) > t) \rightarrow e^{-At}$. Let $\epsilon > 0$. Then
\begin{align*}
& \limsup P(\mu \tau(H_2) > t) \leq \limsup P(\mu \tau(M_{1,2}^-) > t) \\
& = \limsup \left(P(\{\mu \tau(M_1) > t\} \cap \{\mu T > t\}) + P(\{\tau(M_1) > T\} \cap \{\mu T \leq t\} \cap \{\mu \tau(M_2) > t\})\right) \\
& \leq \limsup P(\{\mu \tau(M_1) > t\})P(\{\mu T > t\}) + \limsup P(\{\mu T \leq t\} \cap \{\mu \tau(M_2) > t\}) \\
& = e^{-(1+A)t}+\limsup P(\{\mu T \leq t\} \cap \{\mu(T+T_2) > t\}) \\
& \leq e^{-(1+A)t} \\
& \hspace{20pt} + \limsup (P(\{\mu T \leq t \leq \mu(T+\epsilon)\} \cap \{\mu T_2 \leq \epsilon\}) + P(\{\mu T \leq t \leq \mu(T+T_2)\} \cap \{\mu T_2 > \epsilon\})) \\
& \leq e^{-(1+A)t}+\limsup P(\mu T \in [t-\epsilon\mu,t]) + \limsup P(\mu T_2 > \epsilon) \\
& = e^{-(1+A)t}
\end{align*}
where the third line follows by the independence of $\tau(M_1)$ and $T$ and the last line follows because $\mu T$ is exponentially distributed, $\mu \rightarrow 0$ and $\mu T_2 \rightarrow_p 0$. Hence we have $\limsup P(\mu \tau(H_2) > t) \leq e^{-(1+A)t}$.

We define another model, $M_{1,2}^+$, which is the same as model $M_{1,2}^-$ except that we always count the mutations from model $M_1$. That is, we have models $M_1$ and $M_2$ and we are looking for the first mutation that occurs on either of these models. We couple model $M_{1,2}^+$ with model $H_2$ so that before the stem cell has a mutation the Poisson processes marking the mutations in models $M_1$, $M_2$ and $H_2$ are the same and after the stem cell has a mutation the Poisson processes marking the mutations in model $M_2$ only mark those generations in model $H_2$ which have not yet inherited a type-1 mutation from the stem cell. If we wait $\log N$ time after the stem cell has a mutation then the Poisson processes marking model $M_1$ are not marking model $H_2$. Then
\nonumber
\begin{align}
\liminf P(\mu \tau(H_2) > t) & \geq \liminf P(\mu M_{1,2}^+ > t) \\
& = \liminf P(\{\mu \tau(M_1) > t\} \cap \{\mu \tau(M_2) > t\}) \\
& = \liminf P(\mu \tau(M_1) > t)P(\mu \tau(M_2) > t) \\
& = e^{-(1+A)t}
\end{align}
where the last equality follows by part 1 of Proposition \ref{ddRegime}. Combining this with the above result we have $\lim P(\mu \tau(H_2) < t) = 1 - e^{-(1+A)t}$.

Note that $\limsup P(T = \tau(M_1)) = 0.$ Let $\epsilon > 0$. By continuity of measure there exists $\delta > 0$ such that $\limsup P(0 \leq \tau(M_1)-T \leq \delta) < \epsilon.$ Then
\nonumber
\begin{align}
& \limsup P(\tau(M_1) < \tau(M_2)) \\
& = \limsup (P(\{\tau(M_1) < \tau(M_2)\} \cap \{\tau(M_1) < T\}) + P(\{\tau(M_1) < \tau(M_2)\} \cap \{\tau(M_1) \geq T\})) \\
& \leq \limsup P(\tau(M_1) < T) + \limsup P(T \leq \tau(M_1) \leq T + T_2) \\
& \leq \frac{A}{1+A} + \limsup P(T \leq \tau(M_1) \leq T+\delta | \mu T_2 \leq \delta) \\
& \leq \frac{A}{1+A} + \epsilon
\end{align}
where the fourth line follows because $\mu T_2 \rightarrow_p 0$ and by Lemma \ref{CondLim}. Because $\epsilon > 0$ was arbitrary we have $\limsup P(\tau(M_1) < \tau(M_2)) \leq A/(1+A)$. On the other hand,
$$\liminf P(\tau(M_1) < \tau(M_2)) \geq \liminf P(\{\tau(M_1) < T\}) = \frac{A}{1+A}.$$

Let $Z$ be a random variable such that $Z = 1$ if $\tau(M_1) < \tau(M_2)$ and $Z = 0$ otherwise. If $\tau(M_1) < \tau(M_2)$ then $\sigma(H_2) = \sigma(M_1)$ and $\rho(H_2) = \rho(M_1)$. By Proposition \ref{ddRegime} we know that $\sigma(M_1)$ converges in distribution $U$ and $\rho(M_1)$ converges in probability to 1. The event $\tau(M_1) = \tau(M_2)$ has probability 0. If $\tau(M_1) > \tau(M_2)$ then $\sigma(H_2) = \sigma(M_2)$ and $\rho(H_2) = \rho(M_2)$. By definition of model $M_2$ we always have $\sigma(M_2) = 0$ and by Proposition \ref{ssRegime} $\rho(M_2)$ converges in probability to $\alpha \wedge 1$. Therefore,
$$\sigma(H_2) = \sigma(M_1)Z + \sigma(M_2)(1-Z) \rightarrow_d U\xi$$
and
$$\rho(H_2) = \rho(M_1)Z + \rho(M_2)(1-Z) \rightarrow_d \xi+(\alpha \wedge 1)(1-\xi).$$
\end{proof}

Let $\mathcal{N}$ be the set of Radon measures $\nu$ on a Polish space $(\Psi, \mathcal{B})$ where $\mathcal{B}$ is the Borel $\sigma$-field such that $\nu(\{x\}) \in \N \cup \{0,\infty\}$ for all $x \in \Psi$. For the next proof we will consider a point process to be a random variable taking on elements of $\mathcal{N}$. We consider $\nu(\{x\})$ to be the number of times the point $x$ has been marked. For a Poisson point process whose intensity measure has no atoms $\nu(\{x\})$ is 0 or 1 for all $x$ and $\{x \in \Psi: \nu(\{x\}) > 0\}$ is discrete with probability 1.

Let $\Psi = [0,\infty) \times [0,1]$. The Poisson point process of successful type-1 mutations in model $M_1$ induces a point process on $\Psi$ where if a successful type-1 mutation occurs at time $t$ on a cell in generation $i$ in model $M_1$ then there is a point of $\Psi$ at $(t/l,i/l)$. We will call this point process $P_M$.

\begin{Lemma} \label{PPLem}
The limiting distribution of $P_M$ is a Poisson point process $P_\infty$ which has intensity measure $\nu' = A^2(\lambda \times \lambda_{[1/2,1]})$ where $\lambda$ is Lebesgue measure and $\lambda_{[1/2,1]}$ is the measure defined by $\lambda_{[1/2,1]}(B) = \lambda(B \cap [1/2,1])$ for any Lebesgue measurable set $B$.
\end{Lemma}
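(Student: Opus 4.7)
The plan is to apply Kallenberg's criterion for convergence of simple point processes to a Poisson one with diffuse intensity: it suffices to verify, for every bounded rectangle $B$ in a generating class on $\Psi = [0, \infty) \times [0, 1]$, both that $E[P_M(B)] \to \nu'(B)$ and that $P(P_M(B) = 0) \to e^{-\nu'(B)}$.

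Intensity convergence is a rescaled application of Lemma \ref{longlem}. For $B = [s_1, s_2] \times [a, b]$, the expected count factors as the length $(s_2 - s_1) l$ of the time interval times the sum, over generations $i \in (al, bl]$, of the rate of successful type-1 mutations on generation $i$, which by the squeeze argument of Corollary \ref{longCor} is bracketed by $v_1 2^{i - 1}(1 - e^{-v_2(2^{l-i+1} - C')})$ with $C' \in \{1, 2\}$. Since in the null model $v_2 = \mu \sim A/(\sqrt N \log N)$, we have $\alpha = 1/2$, and Lemma \ref{longlem} gives $\sum_{i \in (al, bl]} v_1 2^{i-1}(1-e^{-v_2(2^{l-i+1} - C')}) \sim (b - a \vee 1/2)^+ v_1 v_2 N \log N$. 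Multiplying by $(s_2 - s_1) l$, using $v_1 v_2 N l^2 \to A^2$ together with $(b - a \vee 1/2)^+ = \lambda([a, b] \cap [1/2, 1])$, we conclude $E[P_M(B)] \to A^2 (s_2 - s_1) \lambda([a, b] \cap [1/2, 1]) = \nu'(B)$.

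For the void probabilities, write $P_M(B) = \sum_c X_c$ with $c$ ranging over cells in generations $(al, bl]$ and $X_c$ the number of successful type-1 mutations on $c$ during $[s_1 l, s_2 l]$. The key structural fact is that $X_{c_1}$ and $X_{c_2}$ are independent whenever $c_1$ and $c_2$ have disjoint descendant subtrees: both the type-1 Poisson processes on $c_1, c_2$ and the type-2 Poisson processes on their (then disjoint) subtrees are independent. The only dependencies are among ancestor-descendant pairs. I would apply the Chen-Stein bound
\[
d_{TV}\bigl(\mathcal{L}(P_M(B)),\, \mathrm{Poisson}(E[P_M(B)])\bigr) \leq \sum_c E[X_c]^2 + \sum_{(c_1, c_2) \text{ anc.-desc.}}\bigl(E[X_{c_1}] E[X_{c_2}] + E[X_{c_1} X_{c_2}]\bigr),
\]
with dependency neighborhoods being the ancestor-descendant chains, and show both sums are $o(1)$. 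The diagonal sum is dominated by cells near generation $l/2$ and comes to $O(A^4/(\sqrt N \log^2 N))$. The ancestor-descendant sum uses the bound $E[X_{c_1} X_{c_2}] \leq (v_1 (s_2 - s_1) l)^2 \cdot v_2 \cdot 2^{l - i_2 + 1}$ for $i_1 < i_2$ (obtained by treating $c_1$'s success as certain once both cells are type-1), summing over $2^{i_2 - 1}$ descendant cells per $i_2$ and at most $(b-a)l$ ancestor generations per $i_2$; this yields a bound of order $O(A^3 \log N / \sqrt N)$, which tends to $0$. Analogous Chen-Stein bounds on finite disjoint unions of rectangles give convergence of all finite-dimensional distributions of $P_M$ to independent Poissons, from which $P_M \to_d P_\infty$ follows. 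The main obstacle is the Chen-Stein bookkeeping, especially cleanly bounding $E[X_{c_1} X_{c_2}]$ for ancestor-descendant pairs where the success events share the type-2 Poisson processes on $c_2$'s subtree; the saving grace is that the per-cell success probability is so small that even a naive union bound is summable over all ancestor-descendant pairs in the rescaled region.
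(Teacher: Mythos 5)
Your proposal is correct in outline, but it takes a genuinely different route from the paper. The paper exploits the fact that the $H_2$ counter construction assigns each type-1 mutation its own independent family of type-2 Poisson processes, so that in model $M_1$ the successful type-1 mutations already form an \emph{exactly} Poisson (inhomogeneous) point process for every $N$; the proof then sandwiches the intensity of $P_M$ between those of two time-homogeneous Poisson processes $P_1$, $P_2$ using the $C'\in\{1,2\}$ brackets, reduces convergence to weak convergence of the bracketing intensity measures (via Lemma \ref{longlem} and the Portmanteau theorem), and transfers this to $P_M$ through a monotone sandwich of probability generating functionals. You instead prove approximate Poissonianity from scratch: the same Lemma \ref{longlem} computation for the intensity, but then a Chen--Stein bound with ancestor--descendant dependency neighborhoods to control void probabilities, concluding via Kallenberg's criterion. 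Your diagonal and cross-term estimates check out (both are $o(1)$ at $\mu\sim A/\sqrt{N}\log N$), and your dependency neighborhoods are a valid superset of the true dependency structure --- indeed, for $P_M$ as defined on $M_1$ the success indicators of distinct type-1 mutations are actually mutually independent by construction, so the ancestor--descendant bookkeeping you flag as the main obstacle is vacuous here and your bounds hold a fortiori. What your approach buys is robustness: it would apply essentially unchanged to the point process of successful type-1 mutations in model $H_1$, where ancestor--descendant pairs genuinely share type-2 Poisson processes and the paper's exact-Poisson shortcut is unavailable. What the paper's approach buys is economy: no second-moment estimates are needed at all once one notices that $P_M$ is a thinned Poisson process, so the entire lemma reduces to the weak convergence of intensities that you also establish in your first step.
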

\begin{proof}
We let $C_C(\Psi,[-1,0])$ be the set of continuous functions $h:\Psi \rightarrow [-1,0]$ such that the set $\{\psi \in \Psi: h(\psi) \neq 0\}$ is precompact. Recall that a point process $X$ has an associated generating functional $\mathfrak{F}: C_C(\Psi,[-1,0]) \rightarrow \R$ defined by
$$\mathfrak{F}(h) = E[\prod_{\psi \in \Psi} (h(\psi)+1)^{\nu(\psi)}]$$
where $\nu$ is a Radon measure on $\Psi$ as described above. Probability generating functionals uniquely determine the distribution of point processes (see Theorem 14 of section 29.5 in \cite{FG}). Moreover, a sequence of point processes converges in distribution to a point process if and only if the corresponding sequence of generating functionals converges pointwise to a functional $\mathfrak{F}$ that satisfies the following: If $h_m$ is in the domain of $\mathfrak{F}$ for each $m$, $\bigcup_{m=1}^{\infty}\{\psi: h_m(\psi) \neq 0\}$ is relatively compact, and $h_m(\psi) \rightarrow 0$ as $m \rightarrow \infty$ for each $\psi$, then $\mathfrak{F}(h_m) \rightarrow 1$ as $m \rightarrow \infty$. In this case $\mathfrak{F}$ is the probability generating functional of the limiting point process (see Theorem 20 of Section 29.7 in \cite{FG}).

Notice that for any $N$ the points marked in $\Psi$ will all have coordinates $(x,y)$ where $y$ takes values in $\{1/\log N,2/\log N, \dots, 1\}$. We know from the proof of Corollary \ref{longCor} that the rate at which mutations occur along generation $i$ is bounded between $2^{i-1}\mu(1-e^{-\mu(2^{l-i+1}-2)})$ and $2^{i-1}\mu(1-e^{-\mu(2^{l-i+1}-1)})$. Therefore, if we look at the points that are marked in $\Psi$ whose second coordinate is fixed at $i/\log N$, the rate at which the marking will occur will be between $(\log N) 2^{i-1}\mu(1-e^{-\mu(2^{l-i+1}-2)})$ and $(\log N) 2^{i-1}\mu(1-e^{-\mu(2^{l-i+1}-1)})$ where the $\log N$ appears because time is scaled by $1/\log N$. This observation will allow us to work with time homogeneous Poisson point processes.

Let $\mathfrak{F}$ denote the generating functional associated with $P_M$. Let $\mathfrak{F}_1$ be the generating functional associated with the Poisson process on $\Psi$ which marks points at rate $(\log N)2^{i-1}\mu(1-e^{-\mu(2^{l-i+1}-2)})$ on each generation and let $\mathfrak{F}_2$ be the generating functional associated with the Poisson process on $\Psi$ which marks points at rate $(\log N)2^{i-1}\mu(1-e^{-\mu(2^{l-i+1}-1)})$ along each generation. Call the time homogeneous Poisson point processes $P_1$ and $P_2$ respectively. Because the intensity measure of $P_M$ is always between the intensity measures of $P_1$ and $P_2$ we have the bounds $\mathfrak{F}_1 \leq \mathfrak{F} \leq \mathfrak{F}_2$.

Let $X$ be a Poisson process with intensity measure $\nu$. It is known that the probability generating functional associated with $X$ is
$$\mathfrak{P}(h) = e^{-\int_{\Psi}h d\nu}.$$
To show a sequence of Poisson processes $\{X_n\}_{n=0}^\infty$ with intensity measures $\{\nu_n\}_{n=0}^\infty$ converges in distribution to a Poisson process $X$ with intensity measure $\nu$ it is enough to show that $\{\nu_n\}_{n=0}^\infty$ converges weakly to $\nu$. That is, for each $h \in C_C(\Psi,[-1,0])$ we need $\int_{\Psi} h d\nu_n \rightarrow \int_{\Psi} h d\nu$ as $n \rightarrow \infty$. Let $\nu_N^1$ be the intensity measure of $P_1$ when there are $N$ cells in the population and let $\nu_N^2$ be the intensity measure of $P_2$ when there are $N$ cells in the population. The goal is to show $\nu_N^1$ and $\nu_N^2$ both converge weakly to $\nu'$. Then the limiting distribution of $P_M$ will be $P_\infty$.

Let $R = (a,b] \times (c,d] \subset \Psi$. Then
$$\nu_N^1 (R) = (a-b)(\log N)\sum_{i \in (lc,ld]}2^i \mu (1-e^{-\mu(2^{l-i+1}-2)}) \rightarrow A^2 (d-c \vee \frac{1}{2})^+(b-a) = \nu'(R)$$
by Lemma \ref{longlem} and the assumption that $\mu \sim A/ \sqrt{N} \log N$ which implies $\mu^2 N \log N \sim A^2/\log N$. Now let $O$ be any open subset of $\Psi$. We can write $O = \bigcup_{n=1}^\infty R_n$ where each $R_n$ is a half open rectangle in the same form as $R$ above and the sets $\{R_n\}_{n=1}^\infty$ are pairwise disjoint. Then
$$\liminf_{N \rightarrow \infty}\nu_N^1(O) = \liminf_{N \rightarrow \infty} \sum_{j=1}^\infty \nu_N^1 (R_j) \geq \sum_{j=1}^\infty \nu'(R_j) = \nu'(O)$$
where the inequality follows by Fatou's lemma. By the same reasoning $\liminf \nu_N^2(O) \geq \nu'(O)$ for any open subset $O$ of $\Psi$. It follows by the Portmanteau Theorem that both $\nu_N^1$ and $\nu_N^2$ converge weakly to $\nu'$ as $N$ goes to infinity. Because of the bounds on the linear functionals we have that the limiting distribution of $P_M$ is a Poisson process with intensity $\nu'$.
\end{proof}

The notation used in Lemma \ref{PPLem} will also be used in this proof.

\begin{proof}[Proof of part 4 of Proposition \ref{nullProp}]
Notice that this is the boundary between two cases that are determined by model $M_1$. By Corollary $\ref{ddsigma2}$ we know $\rho(M_1) \rightarrow_p 1$ for all conditions that we are considering. Therefore, $\rho(H_1) \rightarrow_p 1$ in this case.

The strategy is to define functions $g$ and $h$ on the set of Radon measures that are continuous everywhere except a set of measure 0. Then we will apply the Continuous Mapping Theorem to get the desired convergence in distribution. Let $D$ be the subset of $\mathcal{N}$ such that $\nu \in D$ if there exists $(x,y) \in \Psi$ and $t \in \R$ such that $\nu(x,y) > 0$ and $\nu(x+t,y+t) > 0$. For all $t \geq 0$ define sets $T_t = \{(x,y): 0 \leq y \leq 1/2 \mbox{ and } 0 \leq x \leq y+t-1 \} \subset \Psi.$ These sets correspond the the triangles and quadrilaterals that were shown in the picture in the introduction. Let $V = \{(x,y) \in \Psi: \nu(x,y) > 0\}$ and define $t_0 = \inf \{t:V \cap T_t \neq \varnothing\}$. Define
$$g(\nu) = \lim_{\epsilon \rightarrow 0} \sup_y \{y:(x,y) \in V \cap T_{t_0+\epsilon} \mbox{ for some } x\}$$
and $h(\nu) = t_0$.

Given a Poisson point process $P$ on $\Psi$ whose intensity has no atoms, we can project the points of $P$ onto the line $y = -x$ in $\R^2$ along perpendicular angles of $\pi/4$. With probability 1 no two points of $P$ will be mapped to the same point under the projection. That is, under the law of $P$, $D$ has probability 0. Moreover, with probability 1 there will be no limit points under the projection. Therefore, under the intensity measure $A^2 (\lambda_{[1/2,1]} \times \lambda)$, there exists a unique point $(x_0,y_0) \in V \cap T_{t_0}$ and an $\epsilon > 0$ such that $V \cap T_{t_0+\epsilon} = \{(x_0,y_0)\}$ with probability 1. By definition $g(P) = y_0$. We claim that $g$ and $h$ are continuous at any Radon measure $\nu \in \mathcal{N} \backslash D$.

Let $\nu \in \mathcal{N} \backslash D$ and let $\{\nu_n\}_{n=1}^\infty$ be a sequence of Radon measures that converges weakly to $\nu$. Let $\epsilon > 0$ and  let $(x_0,y_0)$ be the unique point of $T_{t_0+\epsilon}$ such that $\nu(x_0,y_0) > 0$. For each point $(x',y') \in \Psi$ and every natural number $m$ define a function
$$f_{(x',y'),m}(x,y) = \left\{ \begin{array}{ll}
-1 & \mbox{ if } |(x,y)-(x',y')| < \epsilon/m\\
-(2-m|(x,y)-(x',y')|/\epsilon) & \mbox { if } \epsilon/m \leq |(x,y)-(x',y')| \leq 2\epsilon/m \\
0 & \mbox{ otherwise}
\end{array}
\right.$$
For $m$ large enough we have $\int_\Psi f_{(x_0,y_0),m}(x,y) d\nu = -1$ so $\int_\Psi f_{(x_0,y_0),m}(x,y) d\nu_n \rightarrow -1$ as $n \rightarrow \infty$ for large enough values of $m$. Because we can make $m$ arbitrarily large, there must be a sequence of points $\{(x_n,y_n)\}_{n=1}^\infty$ such that $\nu_n(x_n,y_n) = -1$ for all $n$ and $(x_n,y_n) \rightarrow (x_0,y_0)$ as $n \rightarrow \infty$. Likewise, for any point $(x',y') \in T_{t_0+\epsilon}$ there exists a large enough $m$ such that $\int_\Psi f_{(x',y'),m}(x,y) d\nu = 0$ so $\int_\Psi f_{(x',y'),m}(x,y) d\nu_n \rightarrow 0$ as $n \rightarrow \infty$. This shows that for $n$ large enough the Radon measures $\nu_n$ will assign measure 0 to all points in a ball of radius $\epsilon/m$ about $(x',y')$. From this it is easy to conclude $g(\nu_n) \rightarrow g(\nu)$ and $h(\nu_n) \rightarrow h(\nu)$. Therefore, $g$ and $h$ are both continuous on $\mathcal{N} \backslash D$. By Lemma \ref{PPLem} and the Continuous Mapping Theorem $g(P_M)$ converges in distribution to $g(P_\infty)$ and $h(P_M)$ converges in distribution to $h(P_\infty)$.

The next goal is to show that $g(P_M) - \sigma(M_1) \rightarrow_p 0$ and $h(P_M) - \tau(M_1)/\log N \rightarrow_p 0$. Then we will have that $\sigma(M_1) \rightarrow_d g(P_\infty)$ and $\tau(M_1)/\log N \rightarrow_d h(P_\infty)$. To achieve this we will first show that the probability that $(x_0, y_0)$ corresponds to the cancer causing type-1 mutation converges in probability to 1. Suppose $(x_0,y_0)$ does not correspond to the cancer causing type-1 mutation and let $(x_1,y_1)$ denote the point in $\Psi$ corresponding to the cancer causing type-1 mutation in $M_1$. Let $\epsilon > 0$ and suppose that $(x_1,y_1) \notin T_{t_0+\epsilon}$. The point $(x_0,y_0) \in T_{t_0}$ corresponds to a successful type-1 mutation in model $M_1$, and by the way that model $M_1$ marks points in $\Psi$ there will be a type-2 mutation in model $M_1$ that corresponds to a point in $T_{t_0}$. The ray starting at $(x_1,y_1)$ with an angle of $\pi/4$ will represent all of the descendants of the cancer causing type-1 mutation. The point on this line whose first coordinate is $t_0$ will be $(t_0,y'')$ where $y'' \leq 1-\epsilon$. In this case $\rho(M_1) = y'' \leq 1-\epsilon$. Let $E_1$ be the event that $(x_0,y_0)$ is the point in $\Psi$ that corresponds to the cancer causing type-1 mutation and $E_2$ be the event that two or more points occur in $T_{t_0+\epsilon}$. We know that $P_M$ converges in distribution to $P_\infty$ by Lemma \ref{PPLem} so
\begin{align*}
\limsup P(E_1^C) & = \limsup (P(E_1^C \cap \{(x_1,y_1) \in T_{t_0+\epsilon}\})+P(E_1^C \cap \{(x_1,y_1) \notin T_{t_0+\epsilon}\})) \\
& \leq \limsup P(E_2)+ \limsup P(\rho(M_1) < 1-\epsilon) \\
& \leq \frac{A^2}{2}\epsilon
\end{align*}
where the last line follows because $\rho(M_1) \rightarrow_p 0$ and $P(E_2) \leq P(V \cap (T_{t_0+\epsilon}\backslash T_{t_0}) \neq \varnothing)$. Because $\epsilon > 0$ was chosen arbitrarily, we have $\lim P(E_1^C) = 0$.

The above has established that $\lim P(E_1) = 1$. By definition of $\sigma(M_1)$ and $g(P_M)$ it is clear that
$$P(\sigma(M_1) - g(P_M) = 0|E_1) = 1$$
because $\sigma(M_1) = g(P_M) = y_0.$ Conditional on the event $E_1$ we also know that the cancer causing type-1 mutation occurs at time $(\log N) x_0$. Let $(x_0',y_0')$ be the point in $\Psi$ that corresponds to the type-2 mutation in $M_1$, so that $\rho(M_1) = y_0'$. Let $\nu$ be the Radon measure of points in $\Psi$ induced by $M_1$ and consider the fact that the descendants of the cancer causing type-1 mutation will lie on a line starting at $(x_0,y_0)$ with angle $\pi/4$. It is clear that $h(\nu) = t_0 = x_0 + 1 - y_0$ and $\rho(M_1) = y_0+\tau(M_1)/\log(N)-x_0$. Thus, if $h(\nu) - \tau(M_1)/\log N > \epsilon$ then $1-\rho(M_1) > \epsilon$, or equivalently $\rho(M_1) < 1-\epsilon$. Therefore, because $P(E_1) \rightarrow 1$,
$$P(h(P_M)-\tau(M_1)/\log N > \epsilon|E_1) = P(\rho(M_1) < 1-\epsilon|E_1) \rightarrow 0.$$
Again using the fact that $P(E_1) \rightarrow 1$ we get the desired result.

Now we are left to show that $g(P_\infty)$ and $h(P_\infty)$ have the distributions that are stated in part 4 of Proposition \ref{nullProp}. We have $P(h(P_\infty) \leq t)$ is the probability that a point of the Poisson process with intensity $A^2 (\lambda_{[1/2,1]} \times \lambda)$ has been marked in $T_t$. For $t \leq 1/2$ this is $1-e^{-A^2 t^2/2}$ and for $t > 1/2$ this is $1-e^{-A^2 t/2+A^2/8}$. Therefore,
$$P(\tau(M_1)/\log N \leq t) \rightarrow (1-e^{-A^2 t^2/2})1_{[0,1/2]}(t)+(1-e^{-A^2 t/2+A^2/8})1_{(1/2, \infty)}(t).$$

To find the distribution of $g(P_\infty)$ we will use the joint density function of $g(P_\infty)$ and $h(P_\infty)$. From the above computation it is clear that the density of $h(P_\infty)$ is
$$f_h(t) = A^2 te^{-A^2 t^2/2}1_{[0,1/2]}(t)+\frac{A^2}{2}e^{-A^2 t/2+A^2/8}1_{(1/2,\infty)}(t).$$
Conditioned on the event that $h(P_\infty) = t$ we know that $g(P_\infty)$ will have uniform distribution. If $t \leq 1/2$ then $g(P_\infty)$ is uniformly distributed on the interval $[1-t,1]$. If $t > 1/2$ then $g(P_\infty)$ is uniformly distributed on $[1/2,1]$. This gives us the conditional density function
$$f_{g|h}(s|t) = \left\{ \begin{array}{ll}
\frac{1}{t} & \mbox{ if } 1-t \leq s \leq 1 \mbox{ and } 0 \leq t \leq \frac{1}{2} \\
2 & \mbox{ if } \frac{1}{2} \leq s \leq 1 \mbox{ and } t > \frac{1}{2} \\
\end{array}
\right. .$$
Therefore, the joint density function of $g(P_\infty)$ and $h(P_\infty)$ is
$$f(s,t) = A^2e^{-A^2t^2/2}1_{[0,1/2]}(t)1_{[1-t,1/2]}(s)+A^2e^{-A^2t/2+A^2/8}1_{(1/2,\infty)}(t)1_{[1/2,1]}(s).$$
Integrating over $t$ we find that the density of $g(P_\infty)$ is
$$f_g(s) = \left(\int_{1-s}^{1/2}A^2e^{-A^2 t^2/2}dt+2e^{-A^2/8}\right)1_{[1/2,1]}(s).$$

This gives the desired limiting distribution for model $M_1$. By the same arguments as used above, the results will hold for model $H_1$ as well.
\end{proof}

\begin{proof}[Proof of part 6 of Proposition \ref{nullProp}]
First we change model $M_1$ so that only generation $l-1$ will get type-1 mutations and generation $l$ will get type-2 mutations. Also, assume that only one of the daughters will keep a mutation when the cells split so that if a type-1 cell splits it has a type-0 daughter and a type-1 daughter. The rate at which the type-1 mutations occur will be $\mu N/4$ since there are $N/4$ cells in generation $l-1$. Note that $\mu N/4 \sim A\sqrt{N}/4$. The probability that a type-1 mutation will have a type-2 descendant is $1-e^{\mu t} \sim \mu t \sim A t / \sqrt{N}$. Therefore, the type-2 mutations occur according to a Poisson process whose intensity measure $\nu$ satisfies $\nu([0,t]) \geq (A\sqrt{N}/4)(A t/\sqrt{N}) = A^2 t/4$. We have may have to wait up to two time units for the type-2 mutation to occur after the successful type-1 appears. For the sake of a lower bound we will always assume it takes 2 time units after a successful type-1 mutation until the type-2 mutation. By coupling this model with model $H_2$, this gets us $\liminf P(\tau(M_1) \leq t) \geq 1-e^{-2-A^2 t/4}$.

For the upper bound we change model $M_1$ so that the type-1 cells never undergo apoptosis. There are $N$ cells getting type-1 mutations so the type-1 mutations occur at rate $\mu N \sim A \sqrt{N}.$ If we wait $t$ time until after a type-1 mutation has occurred the cell will have at most $2^t$ descendants. If the type-1 mutation had occurred at time 0 and all of the descendants had existed since the type-1 mutation occurred then the probability that one of the cells had acquired a type-2 mutation would be $t 2^{\lfloor t \rfloor} \mu \leq t 2^t \mu \sim t 2^t A/\sqrt{N}$. Because the type-1 mutation may occur after time 0 and there have not been $2^t$ descendants with the type-1 mutation since the mutation occurred this is an upper bound on the probability that a type-2 mutation has occurred by time $t$. Therefore, the type-2 mutations occur according to a Poisson process with intensity $\nu([0,t]) \leq (A \sqrt{N})(t 2^t A/\sqrt{N}) = t 2^t A^2.$ Then $\limsup P(\tau(M_1) \leq t) \leq 1-e^{-A^2 2^t t}$. This shows part 6 of Proposition \ref{nullProp} with $c = 1-e^{-2-A^2 t/4}$ and $C = 1-e^{-A^2 2^t t}$.

By Corollary \ref{ddsigma2} we know $\rho(M_1) \rightarrow 1$. By the definitions of $\sigma(M_1)$ and $\rho(M_1)$ for any $\epsilon > 0$ if $\rho(M_1) - \sigma(M_1) > \epsilon$ then $\tau(M_1) > \epsilon \log N$. Therefore,
$$P(\rho(M_1)-\sigma(M_1) > \epsilon) \leq P(\tau(M_1) > \epsilon \log N) \leq e^{-A^2 2^{\delta \log N}(\delta \log N)} \rightarrow 0.$$
Let $\epsilon > 0$ and $\delta > 0$ and choose $N$ large enough so that $P(1-\rho(M_1) > \epsilon/2) < \delta/2$ and $P(\rho(M_1)-\sigma(M_2) > \epsilon/2) < \delta/2$. Then
\begin{align*}
P(1-\sigma(M_1) > \epsilon) & = P(1-\rho(M_1)+\rho(M_1)-\sigma(M_1) > \epsilon) \\
& \leq P(1-\rho(M_1) > \epsilon/2) + P(\rho(M_1)-\sigma(M_1) > \epsilon/2) \\
& < \delta.
\end{align*}
Therefore, $\sigma(M_1) \rightarrow_p 1$.

Using the same techniques as in the previous sections, we get the same results for $H_1$.
\end{proof}

\section{Acknowledgements}
I would like to thank Jason Schweinsberg for patiently helping me work through various parts of the problem and for helping to revise the first drafts of the paper.

\end{document}